\def\setliststart#1{\setcounter{\@listctr}{#1}%
  \addtocounter{\@listctr}{-1}}
\newtheorem{theorem}{Theorem}[section]
\newtheorem{lemma}[theorem]{Lemma}
\newtheorem{proposition}[theorem]{Proposition}
\newtheorem{corollary}[theorem]{Corollary}
\newtheorem{remarks}[theorem]{Remark}
\newtheorem{definition}[theorem]{Definition}
\numberwithin{equation}{section}
\newcommand{\R}{\mathbb{R}}
\newcommand{\N}{\mathbb{N}}
\newcommand{\K}{\mathcal{K}}
\newcommand{\PP}{\mathcal{P}}
\newcommand{\A}{\mathcal{A}}
\newcommand{\C}{\mathcal{C}}
\newcommand{\LL}{\mathcal{L}}
\DeclareMathOperator*{\supp}{spt}
\DeclareMathOperator*{\esssup}{ess\ sup}
\DeclareMathOperator*{\eps}{\varepsilon}
\DeclareMathOperator*{\PB}{A_{\infty}}
\DeclareMathOperator*{\mane}{\alpha({\it L})}
\DeclareMathOperator*{\SR}{SR}
\DeclareMathOperator*{\Cal}{Cal}
\def\moverlay{\mathpalette\mov@rlay}
\def\mov@rlay#1#2{\leavevmode\vtop{%
   \baselineskip\z@skip \lineskiplimit-\maxdimen
   \ialign{\hfil$\m@th#1##$\hfil\cr#2\crcr}}}
\newcommand{\charfusion}[3][\mathord]{
    #1{\ifx#1\mathop\vphantom{#2}\fi
        \mathpalette\mov@rlay{#2\cr#3}
      }
    \ifx#1\mathop\expandafter\displaylimits\fi}
\title[Aubry set for sub-Riemannian control systems]{Aubry set for sub-Riemannian control systems}
\author{Piermarco Cannarsa \and Cristian Mendico}
\address{Dipartimento di Matematica, Universit\`a di Roma ``Tor Vergata'', Via della Ricerca Scientifica 1, 00133 Roma, Italy}
\email{cannarsa@mat.uniroma2.it}
\address{Dipartimento di Matematica, Universit\`a di Roma ``Tor Vergata'', Via della Ricerca Scientifica 1, 00133 Roma, Italy}
\email{mendico@axp.mat.uniroma2.it}
\date{\today}
\subjclass[2010]{35B40; 35F21; 37J60}
\keywords{Weak KAM theory; Aubry-Mather theory; Long time behavior; Sub-Riemannian control}
\begin{document}
\usetagform{blue}
\begin{abstract} 
In the paper [P. Cannarsa, C. Mendico, {\it Asymptotic analysis for Hamilton-Jacobi-Bellman equations on Euclidean space}, (2021) Arxiv], we proved the existence of the limit as the time horizon goes to infinity of the averaged value function of an optimal control problem. For the classical Tonelli case such a limit is called the critical constant of the problem. In the special case of sub-Riemannian control systems, we also proved the existence of a critical solution, that is, a continuous solution to the Hamilton-Jacobi equation associated with such a constant, which also coincides with its Lax-Oleinik evolution. Here, we focus  our attention on the sub-Riemannian case providing a variational representation formula for the critical constant which uses an adapted notion of closed measures. Having such a formula at our disposal, we define and study the Aubry set. First, we investigate dynamical and topological properties of such a set w.r.t. a suitable class of minimizing trajectories of the Lagrangian action. Then, we show that critical solutions to the Hamilton-Jacobi equation are horizontally differentiable  and satisfy the equation in classical sense on the Aubry set.
\end{abstract}
\maketitle

%\tableofcontents
\section{Introduction}
In the recent paper \cite{bib:CM02}, we have studied the asymptotic behaviour as $T\to+\infty$ of the value function
\begin{equation}\label{eq:valuepre}
V_T(x)= \inf_{u}\int_{0}^{T}{L(\gamma^{x}_{u}(t), u(t))\ dt} 
\qquad(x\in\R^d)
%\eqno{}
\end{equation}
where $L$ is a Tonelli Lagrangian,  $u:[0,T]\to\R^m\; (m \leq d)$ is a square integrable control function,  and $\gamma^x_u$ is the solution of the state equation
\begin{equation}\label{eq:preldyn}
\begin{cases}
 \dot\gamma(t)= \sum_{i=1}^{m}{u_{i}(t)f_{i}(\gamma(t))}
 &
 \mbox{ a.e.}\,\, t\in [0,T]
 \vspace{.1cm}
 \\
 \gamma(0)=x\,,
\end{cases}	
\end{equation}
%In particular, such analysis have been applied, still in \cite{bib:CM02}, to the sub-Riemannian case for which the dynamic takes the form
%\begin{equation*}
%f(x,u)=\sum_{i=1}^{m}{u_{i}f_{i}(x)}
%\end{equation*}
where $\{f_1,\dots,f_m\}$ are linearly independent smooth vector fields satisfying the so-called Lie algebra rank condition. Observe that such an assumption ensures that system \eqref{eq:preldyn} is small time locally controllable. 
By using such a property and assuming the existence of a compact attractor for the  optimal trajectories of \eqref{eq:valuepre}, we proved in \cite{bib:CM02} that
\begin{equation}\label{eq:prelmane}
\lim_{T \to \infty} \frac{1}{T}V_T(x)=\mane
\end{equation}
uniformly on all bounded subsets of $\R^d$. Hereafter, we call $\mane$ the {\it critical constant} of $L$. In fact, the existence of the limit in \eqref{eq:prelmane} was obtained in \cite{bib:CM02} for a general control system of the form 
\begin{equation*}
\dot\gamma(t)=g(\gamma(t), u(t)) 
\end{equation*}
under a suitable controllability condition.

As is well known (see \cite{bib:MA} and the Appendix of \cite{bib:CM02}), the convergence of time averages \eqref{eq:prelmane} entails the (locally uniform) convergence as $\lambda\downarrow 0$ of Abel means $\{\lambda v_\lambda\}_{\lambda>0}$, where
$$
v_{\lambda}(x)=\inf_{u(\cdot)} \int_{0}^{\infty}{e^{-\lambda t}L(\gamma^{x}_{u}(s), u(s))\ ds}\qquad (x\in \R^d)\,.
$$
Using such a result, in \cite{bib:CM02}, we were able to construct a continuous viscosity solution, $\chi$, of the so-called critical Hamilton-Jacobi equation
\begin{equation}\label{eq:criticalprel}
\mane + H(x, D\chi(x))=0\qquad (x\in \R^d)\,,
\end{equation}
where $H$ is defined by
\begin{equation}\label{eq:intro_H}
H(x,p)=\sup_{u \in \R^{m}}\left\{\sum_{i=1}^{m}{u_{i}} \langle p_{i},f_{i}(x) \rangle  - L(x,u)\right\}, 
\qquad \forall\ (x,p)\in\R^d\times \R^d.
\end{equation}
Moreover, in  \cite{bib:CM02}, $\chi$ was proved to satisfy the representation formula
\begin{equation}\label{eq:preLA}
\chi(x)=\inf_{(\gamma, u) :\ \gamma(t)=x} \left\{\chi(\gamma(0)) + \int_{0}^{t}{L(\gamma(s), u(s))\ ds} \right\} - \mane t. 
\end{equation}
Hereafter, any solution $\chi$ to \eqref{eq:criticalprel} satisfying \eqref{eq:preLA} will be called a {\it critical solution} for equation \eqref{eq:criticalprel}. Notice that, given a critical solution $\chi$ for \eqref{eq:criticalprel} and any constant $c \in \R$, we have that $\chi + c$ is again a critical solution for \eqref{eq:criticalprel}.

In the classical Tonelli case---where both $L(x,v)$ and $H(x,p)$ are smooth functions, strictly convex and superlinear in $v$ and $p$, respectively---it is known that the critical constant has a powerful interpretation in terms of a suitable class of probability measures minimizing the Lagrangian action on the tangent bundle. This connection is well explained by the celebrated Aubry-Mather and weak KAM theories  (see, for instance, \cite{bib:FA, bib:SOR, bib:MAT} and the references therein). On the other hand, the Hamiltonian in \eqref{eq:intro_H} fails to be Tonelli, in general. So,  classical weak KAM theory does not apply to minimization problems associated with sub-Riemannian control systems. The purpose of this paper is to develop new methods to make such an extension possible. 

Our approach is based on  metric properties of the geometry induced on the state space by the sub-Riemannian structure. In particular, the existence of geodesics w.r.t. the sub-Riemannian metric plays a crucial role in our study of the regularity properties of solutions to \eqref{eq:criticalprel}. We refer the reader to  \cite{bib:ABB, bib:COR, bib:LR} and the references therein for more details on sub-Riemannian geometry. 

Moreover, in order to improve the natural regularity of critical solutions---which, in general, are just H\"older continuous, see \cite{bib:CM02}---we restrict the analysis to the class of sub-Riemannian systems that admit no singular minimizing controls different from zero. We recall that a control function is singular for the problem if there exists a dual arc which is orthogonal to vector fields $\{f_{i}\}_{i=1,\dots, m}$ along the associated trajectory (\Cref{thm:singulararc}). When no singular minimizing controls are present, owing to \cite{bib:CR} we have that critical solutions are locally semiconcave, hence locally Lipschitz, on $\R^d$. 

In order to deal with unbounded state and control spaces, we assume the existence of a compact set $\mathcal{K}_{L} \subset \R^{d}$ such that
	\begin{equation*}
		 \inf_{x \in \R^{d} \backslash \mathcal{K}_{L}} L(x,0) > \min_{x \in \mathcal{K}_{L}} L(x,0). 
	\end{equation*}
	 Notice that $\K_{L}$ acts as a compact attractor for all optimal trajectories: assuming the existence of such an attractor is customary in this kind of problems (see \cite{bib:CCMW}).

Finally, we recall that a crucial role in the classical Aubry-Mather Theory is played by probability measures on the tangent bundle which are invariant for the Euler-Lagrange flow. In sub-Riemannian settings this strategy fails because such a flow does not exist. For this reason, we replace invariant measures by a suitable notion of closed measures, tailored to fit sub-Riemannian structures. Closed measures were first introduced in \cite{bib:FAS} in order to adapt weak KAM theory to problems in the calculus of variations satisfying mild regularity assumptions. In our case, setting
\begin{equation*}
\PP_{c}^{2}(\R^{d} \times \R^{m}) = \left\{\mu \in \PP(\R^{d} \times \R^{m}) : \int_{\R^{d} \times \R^{m}}{|u|^{2}\ \mu(dx,du)} < +\infty,\ \supp(\pi_{1} \sharp \mu)\, \text{compact} \right\}	
\end{equation*}
where $\pi_{1}: \R^{d} \times \R^{m} \to \R^{d}$ denotes projection onto the first factor, i.e. $\pi_{1}(x,u)= x$, we define $\mu \in \PP_{c}^{2}(\R^{d} \times \R^{m})$ to be an $F$-closed measure if
	\begin{equation*}
	\int_{\R^{d} \times \R^{m}}{\langle F^{\star}(x)D\varphi(x), u \rangle\ \mu(dx,du)}=0, \quad \forall\ \varphi \in C^{1}(\R^{d}). 	
	\end{equation*}
	 Note that closed measures are naturally supported by the distribution associated with $\{f_{i}\}_{i=1,\dots,m}$ which, in our case, reduces to $\R^d\times\R^m$.

Denoting by $\C_{F}$ the set of all $F$-closed probability measures, we show that (\Cref{thm:main1}) 
\begin{equation*}
\mane = \inf_{\mu \in \C_{F}} \int_{\R^{d} \times \R^{m}}{L(x,u)\ \mu(dx,du)}.
\end{equation*} 
Then, we consider the action functional, $A_{t}(x,y)$, defined by
		\begin{equation*}
		A_{t}(x,y)=\inf_{(\gamma, u) : \substack{\gamma(0)=x \\ \gamma(t)=y}} \left\{\int_{0}^{t}{L(\gamma(s), u(s))\ ds} \right\},
		\end{equation*}
		and Peierls barrier 
\begin{align*}
\PB(x,y) =  \liminf_{t \to \infty}\ \big[A_{t}(x,y) - \mane t\big] \quad (x,y \in \R^{d}).	
\end{align*}
In the sub-Riemannian context, we derive several properties of Peierls barrier which are known for Hamilton-Jacobi equations of Tonelli type on compact manifolds. In particular, we show that $\PB(x,\cdot)$ is a critical solution of
\eqref{eq:criticalprel} (Corollary~\ref{cor:viscoHJ}). Finally, we study the projected Aubry set,  $\A$, given by
\begin{equation*}
\A := \{x \in \R^{d}: \PB(x,x)=0\}. 
\end{equation*}
After showing that $\A$ is a compact invariant subset of $\R^d$  for the class of calibrated curves for \eqref{eq:preLA}   (\Cref{thm:compactness} and \Cref{prop:aubrycal}), we address the issue of the horizontal differentiability of critical solutions on $\A$. We recall that a continuous function $\psi$ on $\R^{d}$ is differentiable at $x \in \R^{d}$ along the range of $F(x)$ (or, horizontally differentiable at $x$) if there exists $q_{x} \in \R^{m}$ such that
\begin{equation*}
\lim_{v \to 0} \frac{\psi(x + F(x)v) - \psi(x) - \langle q_{x}, v \rangle}{|v|} = 0.	
\end{equation*}
The vector $q_{x}$, also denoted by $D_{F}\psi(x)$, is called the horizontal gradient of $\psi$. By appealing to  the semiconcavity of critical solutions---which is ensured  by \cite{bib:CR} as soon as  we assume the absence of singular minimising controls---we prove that these solutions are differentiable along the range of vector fields $\{f_{i}\}_{i=1,\cdots,m}$ at any point $x \in \A$ (see \Cref{thm:aubryset}). Finally we analyze the behavior of the horizontal gradient of any critical solution showing that for any $x \in \A$
\begin{equation*}
D_{F}\chi(\gamma_{x}(t))=D_{u}L(\gamma_{x}(t), u_{x}(t)) \quad (t \in\R)
\end{equation*}
along any calibrated pair $(\gamma_{x}, u_{x})$ for $\chi$ with $\gamma_{x}(0)=x$. 
%where $(\gamma_{x}, u_{x})$ is an optimal trajectory-control pair at any $x \in \A$. 

\medskip

The rest of the paper is organized as follows. In \Cref{sec:preliminaries}, we fix the notation used throughout the paper and recall some preliminaries. In \Cref{sec:Settingsassumptions}, we introduce the problem and we describe the assumptions we make in this paper. In \Cref{sec:ergodicconstant}, we prove the aforementioned characterization of the critical constant $\mane$. In \Cref{sec:aubrymather}, we define Peierls barrier and describe several properties of such a function. Then, we define the projected Aubry set showing that it is a compact subset of $\R^{d}$ invariant for a set of suitable action minimizing trajectories. Finally, in \Cref{sec:horizontal} we prove that any critical solution is horizontally differentiable on the projected Aubry set and we provide a representation formula for the horizontal gradient.

% the Aubry set associated with sub-Riemannian control systems and we prove the horizontal differentiability of the critical solution to \eqref{eq:criticalprel} at any point lying on such a set. Finally, i, we prove that the Aubry set is compact and invariant for the class of calibrated curves. 

\medskip
{\bf Acknowledgements.} Piermarco Cannarsa was partly supported by Istituto Nazionale di Alta Matematica (GNAMPA 2020 Research Projects) and by the MIUR Excellence Department Project awarded to the Department of Mathematics, University of Rome Tor Vergata, CUP E83C18000100006. Cristian Mendico was partly supported by Istituto Nazionale di Alta Matematica (GNAMPA 2020 Research Projects) and University Italo Francese (Vinci Project 2020) n. C2-794. Part of this paper was completed while the second author was visiting in Department of Mathematics of the University of Rome Tor Vergata.

\medskip
{\bf Conflict of interest.} The authors declare that there is no conflict of interest.

\section{Preliminaries}
\label{sec:preliminaries}
\subsection{Notation}
We write below a list of symbols used throughout this paper.
\begin{itemize}
	\item Denote by $\mathbb{N}$ the set of positive integers, by $\mathbb{R}^d$ the $d$-dimensional real Euclidean space,  by $\langle\cdot,\cdot\rangle$ the Euclidean scalar product, by $|\cdot|$ the usual norm in $\mathbb{R}^d$, and by $B_{R}$ the open ball with center $0$ and radius $R$.
	\item If $\Lambda$ is a real $n\times n$ matrix, we define the norm of $\Lambda$ by 
\[
\|\Lambda\|=\sup_{|x|=1,\ x\in\mathbb{R}^d}|\Lambda x|.
\]

\item For a Lebesgue-measurable subset $A$ of $\mathbb{R}^d$, we let $\mathcal{L}^{d}(A)$ be the $d$-dimensional Lebesgue measure of $A$ and  $\mathbf{1}_{A}:\mathbb{R}^n\rightarrow \{0,1\}$ be the characteristic function of $A$, i.e.,
\begin{align*}
\mathbf{1}_{A}(x)=
\begin{cases}
1  \ \ \ &x\in A,\\
0 &x \not\in A.
\end{cases}
\end{align*} 
We denote by $L^p(A)$ (for $1\leq p\leq \infty$) the space of Lebesgue-measurable functions $f$ with $\|f\|_{p,A}<\infty$, where   
\begin{align*}
& \|f\|_{\infty, A}:=\esssup_{x \in A} |f(x)|,
\\& \|f\|_{p,A}:=\left(\int_{A}|f|^{p}\ dx\right)^{\frac{1}{p}}, \quad 1\leq p<\infty.
\end{align*}
For brevity, $\|f\|_{\infty}$  and $\|f\|_{p}$ stand for  $\|f\|_{\infty,\mathbb{R}^d}$ and  $\|f\|_{p,\mathbb{R}^d}$ respectively.

\item $C_b(\mathbb{R}^d)$ stands for the function space of bounded uniformly  continuous functions on $\mathbb{R}^d$, $C^{2}_{b}(\mathbb{R}^{d})$ for the space of bounded functions on $\mathbb{R}^d$ with bounded uniformly continuous first and second derivatives, and $C^k(\mathbb{R}^{d})$ ($k\in\mathbb{N}$) for the function space of $k$-times continuously differentiable functions on $\mathbb{R}^d$. We set $C^\infty(\mathbb{R}^{d}):=\cap_{k=0}^\infty C^k(\mathbb{R}^{d})$ and we denote by $C_c^\infty(\mathbb{R}^{d})$ the space of functions in $C^\infty(\mathbb{R}^{d})$ with compact support. 

\item Let $a<b\in\mathbb{R}$. $AC([a,b];\mathbb{R}^d)$ denotes the space of absolutely continuous maps $[a,b]\to \mathbb{R}^d$.
  
  \item For $f \in C^{1}(\mathbb{R}^{d})$, the gradient of $f$ is denoted by $Df=(D_{x_{1}}f, ..., D_{x_{n}}f)$, where $D_{x_{i}}f=\frac{\partial f}{\partial x_{i}}$, $i=1,2,\cdots,d$.
Let $k$ be a nonnegative integer and let $\alpha=(\alpha_1,\cdots,\alpha_d)$ be a multiindex of order $k$, i.e., $k=|\alpha|=\alpha_1+\cdots +\alpha_d$ , where each component $\alpha_i$ is a nonnegative integer.   For $f \in C^{k}(\mathbb{R}^{d})$,
define $D^{\alpha}f:= D_{x_{1}}^{\alpha_{1}} \cdot\cdot\cdot D^{\alpha_{d}}_{x_{d}}f$.
\item For any $\eps > 0$ we say that $\xi_{\eps} \in C^{\infty}(\R^{d})$ is a smooth mollifier if
\begin{equation*}
\supp(\xi^{\eps}) \subset B_{\eps}, \quad 0 \leq \xi^{\eps}(x) \leq 1 \,\, \forall\ x \in \R^{d}, \quad \int_{B_{\eps}}{\xi^{\eps}(x)\ dx} = 1.	
\end{equation*}
\item Given a continuous function $\varphi : \R^{d} \to \R$ we denote by $D^{+}\varphi(x)$ the superdifferential of $\varphi$ at $x$, that is,
\begin{equation*}
D^{+}\varphi(x)=\left\{p \in \R^{d}: \limsup_{y \to x} \frac{\varphi(y) - \varphi(x) - \langle p, x-y \rangle}{|x-y|} \leq 0 \right\}.
\end{equation*}
\item For any $R \geq 0$ we say that $\xi_{R}$ is a smooth cut-off function if $\xi_{R}(x)={\bf 1}_{\overline{B}_{R}} \star \xi_{\frac{1}{2}}(x)$ where $\star$ denotes the convolution operator. 
\end{itemize}

\subsection{Measure theory}
We recall here some preliminary material concerning Wasserstein spaces and Wasserstein distance. For more details we refer the reader to \cite{bib:CV, bib:AGS}. 

Let $(X,{\bf d})$ be a metric space (in the paper, we use $X= \R^d$ or $X= \R^d\times \R^m$). 
Denote by $\mathcal{B}(X)$ the  Borel $\sigma$-algebra on $X$ and by $\mathcal{P}(X)$ the space of Borel probability measures on $X$.
The support of a measure $\mu \in \mathcal{P}(X)$, hereafter denoted by $\supp(\mu)$, is the closed set defined by
\begin{equation*}
\supp (\mu) := \Big \{x \in X: \mu(V_x)>0\ \text{for each open neighborhood $V_x$ of $x$}\Big\}.
\end{equation*}
We say that a sequence $\{\mu_k\}_{k\in\mathbb{N}}\subset \mathcal{P}(X)$ is weakly-$*$ convergent to $\mu \in \mathcal{P}(X)$, and we write
$\mu_k \stackrel{w^*}{\longrightarrow}\mu$,
  if
\begin{equation*}
\lim_{n\rightarrow \infty} \int_{X} f(x)\,d\mu_n(x)=\int_{X} f(x) \,d\mu(x), \quad  \forall f \in C_b(X).
\end{equation*}
There exists an interesting link between weak-$*$ convergence and convergence of the support of measures, see \cite[Proposition 5.1.8]{bib:AGS}. Indeed, if $\{\mu_{j}\}_{j \in \N} \subset \PP(X)$ weakly-$*$ converges to $\mu \in \PP(X)$, then
\begin{equation}\label{eq:kuratoski}
\forall\ x \in \supp(\mu) \quad \exists x_{j} \in \supp(\mu_{j}) \quad \text{such that} \quad \lim_{j \to \infty} x_{j}=x.
\end{equation}

For $\alpha \in[1,+\infty)$, the Wasserstein space of order $\alpha$ is defined as
\begin{equation*}
\mathcal{P}^{\alpha}(X):=\left\{m\in\mathcal{P}(X): \int_{X} d(x_0,x)^{\alpha}\,dm(x) <+\infty\right\},
\end{equation*}
for some (thus, all) $x_0 \in X$.
Given any two measures $m$ and $m^{\prime}$ in $\mathcal{P}^{\alpha}(X)$, we define
\begin{equation}\label{def.transportplan}
\Pi(m,m'):=\Big\{\lambda\in\mathcal{P}(X \times X): \lambda(A\times X)=m(A),\ \lambda(X \times A)=m'(A),\ \forall A\in \mathcal{B}(X)\Big\}.
\end{equation}
The Wasserstein distance of order $\alpha$ between $m$ and $m'$ is defined by
    \begin{equation*}\label{dis1}
          d_{\alpha}(m,m')=\inf_{\lambda \in\Pi(m,m')}\left(\int_{X\times X}d(x,y)^{\alpha}\,d\lambda(x,y) \right)^{1/{\alpha}} \quad (m, m' \in \PP^{\alpha}(X)).
    \end{equation*}
    The distance $d_1$ is also commonly called the Kantorovich-Rubinstein distance and can be characterized by a useful duality formula (see, for instance, \cite{bib:CV})  as follows
\begin{equation}\label{eq:2-100}
d_1(m,m')=\sup\left\{\int_{X} f(x)\,dm(x)-\int_{X} f(x)\,dm'(x) \ |\ f:X\rightarrow\mathbb{R} \ \ \text{is}\ 1\text{-Lipschitz}\right\},
\end{equation}
for all $m$, $m'\in\mathcal{P}^{1}(X)$.

Let $X_{1}$, $X_{2}$ be metric spaces, let $\mu \in \PP(X_{1})$, and let $f: X_{1} \to X_{2}$ be a $\mu$-measurable map. We denote by $f \sharp \mu \in \PP(X_{2})$ the push-forward of $\mu$ through $f$ defined as
\begin{equation*}
f \sharp \mu(B):= \mu(f^{-1}(B)), \quad \forall\ B \in \mathcal{B}(X_{2}).	
\end{equation*}
Observe that, in integral form, the push-forward operates on $C_{b}(X_{2})$ as follows
\begin{equation*}
\int_{X_{1}}{\varphi(f(x))\ \mu(dx)} = \int_{X_{2}}{\varphi(y)\ f \sharp \mu(dy)} \quad \forall\ \varphi \in C_{b}(X_{2}).	
\end{equation*}

\subsection{Nonholonomic control systems}
We now recall the notion of nonholonomic control systems and sub-Riemannian distance on $\R^{d}$, see \cite{bib:MON, bib:LR, bib:ABB}.

A nonholonomic system on $\R^{d}$ is a control system of the form
\begin{equation}\label{eq:preliminary}
	\dot\gamma(t)=\sum_{i=1}^{m}{u_{i}(t) f_{i}(\gamma(t))}, \quad t \in [0,+\infty), \,\, u_{i}(t) \in \R, \, i = 1,\dots,m
\end{equation}
where $f_{i}$ are linearly independent smooth vector fields. 
Such a system induces a distance on $\R^{d}$ in the following way. Define the sub-Riemannian metric to be the function $g: \R^{d} \times \R^{m} \to \R \cup \{ \infty\}$ given by
\begin{equation*}
g(x,v)=\inf\left\{\sum_{i =1}^{m}{u_{i}^{2}: v=\sum_{i=1}^{m}{u_{i} f_{i}(x)}}\right\}.	
\end{equation*}
If $v \in \text{span}\{f_{1}(x), \dots, f_{m}(x)\}$ then the infimum is attained at a unique value $u_{x} \in \R^{m}$ and $g(x,v)=| u_{x}|^{2}$. Then, since $g(\gamma(t), \dot\gamma(t))$ is measurable, being the composition of the lower semicontinuous function $g$ with a measurable function, we can define the length of an absolutely continuous curve $\gamma:[0,T] \to \R^{d}$ as
\begin{equation*}
\text{length}(\gamma)=\int_{0}^{T}{\sqrt{g(\gamma(t), \dot\gamma(t))}\ dt}.	
\end{equation*}
Note that since $\sqrt{g(x,v)}$ is homogenous of degree $1$ with respect to the second variable we obtain that the length of any absolutely continuous curve is independent of $T$.
 In conclusion, one defines the sub-Riemannian distance as
 \begin{equation*}
 d_{\text{SR}}(x,y)=\inf_{(\gamma, u) \in \Gamma_{0,T}^{x \to y}}\text{length}(\gamma)	
 \end{equation*}
where $\Gamma_{0,T}^{x \to y}$ denotes the set of all trajectory-control pairs such that $u \in L^{2}(0,T; \R^{m})$, $\gamma$ solves \eqref{eq:preldyn} for such a control $u$, $\gamma(0)=x$ and $\gamma(T)=y$,

Following \cite{bib:CR} it is possible to characterize the sub-Riemannian distance as follows
\begin{equation}\label{eq:subriem}
	d_{\SR}(x,y) = \inf\left\{T >0: \exists\ (\gamma, u) \in \Gamma_{0,T}^{x \to y},\ |u(t)| \leq 1\ \text{a.e.}\ t \in [0,T] \right\}
\end{equation}
for any $x$, $y \in \R^{d}$. 

\begin{definition}
Given $x$, $y \in \R^{d}$ we say that $(\gamma, u)$ is a geodesic pair for $x$ and $y$ if $(\gamma, u) \in \Gamma_{0, d_{\SR}(x,y)}^{x \to y}$ and $|u(t)| \leq 1$ for a.e. $t \in [0,d_{\SR}(x,y)]$.  
\end{definition}

We conclude this preliminary part with a brief introduction to singular controls. Let $x_{0} \in \R^d$ and fix $t >0$. The end-point map associated with system \eqref{eq:preliminary} is the function  
\begin{equation*}
E^{x_{0},t}: L^{2}(0,t; \R^{m}) \to \R^{d}	
\end{equation*}
defined as
\begin{equation*}
E^{x_{0},t}(u)=\gamma(t)	
\end{equation*}
where $\gamma$ is the solution of \eqref{eq:preliminary} associated with $u$ such that $\gamma(0)=x_{0}$. 
 Under the assumption that the vector field $f_{i}$ has sub-linear growth, for any $i=1, \dots m$ it is known that $E^{x_{0},t}$ is of class $C^{1}$ on $L^{2}(0,t;\R^{m})$. Then, we say that a control $\bar{u} \in L^{2}(0,t;\R^{m})$ is singular for $E^{x_{0},t}$ if $dE^{x_{0},t}(\bar{u})$ is not surjective. Moreover, defining the function $H_{0}: \R^{d} \times \R^{d} \times \R^{m} \to \R$ as
\begin{equation*}
	H_{0}(x,p,u)=\sum_{i=1}^{m}{u_{i}\langle p,f_{i}(x) \rangle},
\end{equation*}
we have the following well-known characterization of singular controls (e.g., \cite{bib:CR}). 
\begin{theorem}\label{thm:singulararc}
A control $u \in L^{2}(0,t; \R^{m})$ is singular for $E^{x_{0},t}$ if and only if there exists an absolutely continuous arc $p:[0,t] \to \R^{d}\backslash\{0\}$ such that
\begin{align*}
\begin{cases}
	\dot\gamma(s) & =\  D_{p}H_{0}(\gamma(s), p(s), u(s))
	\\
	-\dot{p}(s) & =\   D_{x}H_{0}(\gamma(s), p(s), u(s))
\end{cases}	
\end{align*}
	with $\gamma(0)=x_{0}$ and 
	\begin{equation*}
	D_{u}H_{0}(\gamma(s), p(s),u(s))=0, \quad \text{for a.e.}\ s \in [0,t],
	\end{equation*}
	that is, 
	\begin{equation*}
		\langle f_{i}(\gamma(s)), p(s) \rangle =0 \quad (i=1,\dots,m)
	\end{equation*}
for any $s \in [0,t]$.   
\end{theorem}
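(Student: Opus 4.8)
The plan is to run the classical Pontryagin-type argument identifying singular controls through the adjoint system, taking for granted (as recalled just above the statement, following \cite{bib:CR}) that $E^{x_{0},t}$ is $C^{1}$ on $L^{2}(0,t;\R^{m})$ under the sub-linear growth hypothesis. First I would make the differential $dE^{x_{0},t}(\bar u)$ explicit. Fixing $\bar u$ with associated trajectory $\gamma$ ($\gamma(0)=x_{0}$), a direction $v\in L^{2}(0,t;\R^{m})$ produces, by differentiating the flow of \eqref{eq:preliminary} with respect to the control, the solution $w$ of the linearized Cauchy problem
\begin{equation*}
\dot w(s)=A(s)w(s)+B(s)v(s),\qquad w(0)=0,
\end{equation*}
where $A(s):=\sum_{i=1}^{m}\bar u_{i}(s)Df_{i}(\gamma(s))$ and $B(s)$ is the $d\times m$ matrix with columns $f_{1}(\gamma(s)),\dots,f_{m}(\gamma(s))$. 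Since $\bar u\in L^{2}$ and $s\mapsto Df_{i}(\gamma(s))$, $s\mapsto f_{i}(\gamma(s))$ are continuous on $[0,t]$, one has $A\in L^{2}(0,t)$ and $B$ continuous; hence the fundamental matrix $M(\cdot)$ of $\dot M=A(s)M$, $M(0)=I$, is absolutely continuous and invertible (Liouville), and the variation-of-constants formula gives
\begin{equation*}
dE^{x_{0},t}(\bar u)v=w(t)=\int_{0}^{t}M(t)M(s)^{-1}B(s)v(s)\,ds .
\end{equation*}

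Next I would translate non-surjectivity into the existence of an annihilating covector. Because $dE^{x_{0},t}(\bar u)$ maps into the finite-dimensional space $\R^{d}$, its range is a proper subspace precisely when there is $\lambda\in\R^{d}\setminus\{0\}$ orthogonal to it, i.e.\ $\int_{0}^{t}\langle B(s)^{\top}(M(s)^{-1})^{\top}M(t)^{\top}\lambda,\ v(s)\rangle\,ds=0$ for every $v\in L^{2}(0,t;\R^{m})$. Setting $p(s):=(M(s)^{-1})^{\top}M(t)^{\top}\lambda$, this is equivalent to $B(s)^{\top}p(s)=0$ for a.e.\ $s$, that is $\langle f_{i}(\gamma(s)),p(s)\rangle=0$ for a.e.\ $s$ and all $i$; continuity of $p$ and $\gamma$ upgrades this to every $s\in[0,t]$, which is exactly $D_{u}H_{0}(\gamma(s),p(s),\bar u(s))=0$. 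Moreover $p(t)=\lambda\neq0$ and $p$ is nowhere zero since $M(s)^{-1}$ is invertible, so $p:[0,t]\to\R^{d}\setminus\{0\}$. Finally $p(s)^{\top}M(s)\equiv\lambda^{\top}M(t)$ is constant in $s$; differentiating yields $\dot p(s)^{\top}=-p(s)^{\top}A(s)$, i.e.\ $\dot p(s)=-A(s)^{\top}p(s)=-D_{x}H_{0}(\gamma(s),p(s),\bar u(s))$, while $\dot\gamma(s)=\sum_{i}\bar u_{i}(s)f_{i}(\gamma(s))=D_{p}H_{0}(\gamma(s),p(s),\bar u(s))$; this is the asserted Hamiltonian system. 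For the converse I would reverse these steps: given such a $p$, the adjoint equation makes $p(s)^{\top}M(s)$ constant, equal to $p(t)^{\top}M(t)$, whence $\langle p(t),dE^{x_{0},t}(\bar u)v\rangle=\int_{0}^{t}\langle p(s),B(s)v(s)\rangle\,ds=0$ for all $v$ by the orthogonality hypothesis, so $dE^{x_{0},t}(\bar u)$ misses the hyperplane orthogonal to $p(t)\neq0$ and is not surjective.

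The only point requiring care beyond this bookkeeping is the first step, the explicit form of $dE^{x_{0},t}(\bar u)$: one must justify global existence of trajectories for $L^{2}$ controls, $L^{2}$-continuity of $u\mapsto\gamma$, and differentiation of the difference quotients under the integral sign — all of which rest on the sub-linear growth of the $f_{i}$ — and this is precisely the material recalled from \cite{bib:CR}. Everything else is elementary Carathéodory ODE theory (existence and invertibility of the fundamental matrix for $L^{1}$ coefficients), duality in $\R^{d}$, and the remark that the a.e.\ identity $\langle f_{i}(\gamma),p\rangle=0$ holds everywhere by continuity; accordingly I would state the result and refer to \cite{bib:CR} for the technical underpinnings.
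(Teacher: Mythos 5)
The paper does not actually prove \Cref{thm:singulararc}; it is stated as a ``well-known characterization of singular controls'' and referred to \cite{bib:CR}, so there is no in-paper argument against which to compare. Your blind proof supplies the standard derivation, and it is correct: you compute $dE^{x_0,t}(\bar u)$ via the linearized (variational) equation $\dot w = A(s)w + B(s)v$, use that this Carath\'eodory system with $A\in L^1(0,t)$ and $B$ continuous has an invertible fundamental matrix $M$, identify non-surjectivity of the resulting integral operator into $\R^d$ with the existence of a nonzero annihilating covector $\lambda$, transport $\lambda$ back along $M$ to define $p(s)=(M(s)^{-1})^{\top}M(t)^{\top}\lambda$, and check that $p$ is nowhere vanishing, solves the adjoint equation $\dot p=-A^{\top}p=-D_xH_0$, and satisfies $B^{\top}p=D_uH_0=0$ a.e.\ (hence everywhere, by continuity of $p$ and $\gamma$); the converse is obtained by reversing these identifications. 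This is precisely the argument one finds in \cite{bib:CR}, so the only caveat is presentational: since the paper merely cites the result, a reader would expect a citation here rather than a full re-derivation, but the derivation you give is sound and complete modulo the $C^1$-differentiability of the end-point map, which you correctly flag as the one nontrivial input taken from \cite{bib:CR}.
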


\section{Settings and assumptions}
\label{sec:Settingsassumptions}
	
	Fixed $m \in \N$ such that $m \leq d$, let 
	\begin{equation*}
		f_{i}: \R^{d} \to \R^{d} \quad (i=1, \dots, m)
	\end{equation*}
	and
	\begin{equation*}
		u_{i}:[0,\infty) \to \R \quad (i=1, \dots, m)
	\end{equation*}
be smooth vector fields and square-integrable controls, respectively. Consider the following controlled dynamics of sub-Riemannian type
	\begin{align}\label{eq:dynamics}
         \dot\gamma(t)= \displaystyle{\sum_{i=1}^{m}{u_{i}(t) f_{i}(\gamma(t))}}= F(\gamma(t))U(t), \quad t \in [0,+\infty)
	\end{align}
	where $F(x)=[f_{1}(x)| \dots | f_{m}(x)]$ is a $d \times m$ real matrix and $U(t)=(u_{1}(t), \dots, u_{m}(t))^{\star}$\footnote{$(u_{1}, \dots, u_{m})^{\star}$ denotes the transpose of $(u_{1}, \dots, u_{m})$}. 
	%We call $(\gamma, u)$ a trajectory-control pair if $u \in L^{2}_{\text{loc}}(0,\infty; \R^{m})$ and $\gamma$ is the solution of \eqref{eq:dynamics} associated with $u$. 

	For any $s_{0}$, $s_{1} \in \R$ with $s_{0} < s_{1}$ and $x$, $y \in \R^{d}$ we set
	\begin{align*}\label{eq:controlnotation}
	\begin{split}
	\Gamma_{s_{0}, s_{1}}^{x \to} & =\{(\gamma, u) \in \text{AC}([s_{0}, s_{1}]; \R^{d}) \times L^{2}(s_{0}, s_{1}; \R^{m}): \dot\gamma(t)=F(\gamma(t))u(t),\,\, \gamma(s_{0})=x\},
	\\
	 \Gamma_{s_{0}, s_{1}}^{\to y} & =\{(\gamma, u) \in \text{AC}([s_{0}, s_{1}]; \R^{d}) \times L^{2}(s_{0}, s_{1}; \R^{m}): \dot\gamma(t)=F(\gamma(t))u(t),\,\, \gamma(s_{1})=y\},
	 \\
	 \Gamma_{s_{0}, s_{1}}^{x \to y} & = \Gamma_{s_{0}, s_{1}}^{x \to} \cap \Gamma_{s_{0}, s_{1}}^{\to y}.
	 \end{split}
	\end{align*}
	%where $\gamma_{u}$ denotes a solution of \eqref{eq:dynamics} associated with the control $u$. 

Throughout this paper we assume vector fields $f_{i}$ to satisfy the following.
\begin{itemize}
%\item[{\bf (F0)}] There exists an integer $r_{0} \geq 1$ such that $f_{i} \in C^{r_{0}-1}(\R^{d})$, for any $i=1,\dots,m$, and 
%\begin{equation*}
%\Delta^{r_{0}}(x)=\R^{d}, \quad \forall x \in \R^{d};	
%\end{equation*}
\item[{\bf (F1)}] There exists a constant $c_{f} \geq 1$ such that for any $i=1, \dots, m$
 \begin{equation}\label{eq:fassum}
	|f_{i}(x)| \leq c_{f}(1+|x|), \quad \forall\ x \in \R^{d}.
\end{equation}
	\item[{\bf (F2)}] $F \in C^{1,1}_{loc}(\R^{d})$ and for any $x \in \R^{d}$ the matrix $F(x)$ has full rank (equal to $m$).
\end{itemize}
%A family of vector fields $\{f_{i}\}_{I=1, \dots, m}$ satisfying {\bf (F0)} is called {\it equiregular}. 
Notice by, from {\bf (F2)} we have that vector fields $f_{i}$ are linearly independent. Moreover, by {\bf (F1)} and Gronwall's inequality we get the following estimate for solutions of \eqref{eq:dynamics}. 
\begin{lemma}\label{lem:boundedtrajectories}
Let $x \in \R^{d}$, $t \geq 0$ and $(\gamma, u) \in \Gamma_{0,t}^{x \to}$. If $u \in L^{\infty}(0,t; \R^{m})$ then we have that
% there exists a constant $\la(t, |x|, \| u\|_{\infty, [0,t]}) \geq 0$ such that
	\begin{equation*}
	|\gamma(s)| \leq (|x| + c_{f}\|u\|_{\infty}t)e^{c_{f}\|u\|_{\infty}t}, \quad \forall\ s \in [0,t].	
	\end{equation*}
	\end{lemma}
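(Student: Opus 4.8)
The plan is a textbook Gronwall estimate applied to the integral form of the dynamics \eqref{eq:dynamics}. Since $(\gamma,u)\in\Gamma_{0,t}^{x\to}$, the curve $\gamma$ is absolutely continuous with $\dot\gamma(\tau)=F(\gamma(\tau))u(\tau)=\sum_{i=1}^{m}u_i(\tau)f_i(\gamma(\tau))$ a.e., so by the fundamental theorem of calculus, for every $s\in[0,t]$,
\[
\gamma(s)=x+\int_{0}^{s}\sum_{i=1}^{m}u_i(\tau)f_i(\gamma(\tau))\,d\tau,
\qquad\text{hence}\qquad
|\gamma(s)|\le |x|+\int_{0}^{s}\sum_{i=1}^{m}|u_i(\tau)|\,|f_i(\gamma(\tau))|\,d\tau .
\]

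Next I would invoke the sublinear growth assumption {\bf (F1)}, $|f_i(\gamma(\tau))|\le c_f(1+|\gamma(\tau)|)$ for each $i$, together with the bound $\sum_{i=1}^{m}|u_i(\tau)|\le \|u\|_\infty$ for a.e. $\tau$ (any dimensional factor arising here can be absorbed into $c_f$, since {\bf (F1)} only asks for the \emph{existence} of a constant $c_f\ge 1$). This produces the scalar integral inequality
\[
|\gamma(s)|\le |x|+c_f\|u\|_\infty\int_{0}^{s}\bigl(1+|\gamma(\tau)|\bigr)\,d\tau
=\bigl(|x|+c_f\|u\|_\infty s\bigr)+c_f\|u\|_\infty\int_{0}^{s}|\gamma(\tau)|\,d\tau .
\]
Finally, applying the integral form of Gronwall's lemma with nondecreasing forcing term $a(s)=|x|+c_f\|u\|_\infty s\le |x|+c_f\|u\|_\infty t$ and multiplier $c_f\|u\|_\infty$ gives $|\gamma(s)|\le\bigl(|x|+c_f\|u\|_\infty t\bigr)e^{c_f\|u\|_\infty s}$, and bounding $s\le t$ in the exponent yields exactly the stated estimate.

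There is essentially no obstacle here. The hypothesis $u\in L^\infty(0,t;\R^m)$ is used precisely to make $\|u\|_\infty$, and hence the exponent, finite; absolute continuity of $\gamma$ justifies the representation in the first step; and the only point deserving a line of care is the passage from $\sum_i|u_i|\,|f_i(\gamma)|$ to $c_f\|u\|_\infty(1+|\gamma|)$, i.e. handling the dimensional constant consistently with {\bf (F1)}. Everything else is the standard Gronwall mechanism.
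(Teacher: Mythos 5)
Your Gronwall argument is exactly the route the paper indicates: the lemma in the text is stated without a written proof, prefaced only by the remark that it follows ``by {\bf (F1)} and Gronwall's inequality,'' and your derivation is precisely what that remark is pointing to. So the approach matches.

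There is, however, one point you flag and then dismiss too quickly. The inequality $\sum_{i=1}^m |u_i(\tau)|\le \|u\|_\infty$ is \emph{false} in general: the paper's $\|u\|_\infty$ is $\esssup_\tau|u(\tau)|$ with $|\cdot|$ the Euclidean norm on $\R^m$, and by the triangle inequality one has $\sum_i|u_i|\ge|u|$, not $\le$. Cauchy--Schwarz only gives $\sum_i|u_i(\tau)|\le\sqrt m\,|u(\tau)|\le\sqrt m\,\|u\|_\infty$, so your Gronwall step actually produces
\[
|\gamma(s)|\le\bigl(|x|+\sqrt m\,c_f\|u\|_\infty t\bigr)e^{\sqrt m\,c_f\|u\|_\infty t},
\]
which is weaker (by a harmless $\sqrt m$) than the displayed estimate. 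Your parenthetical claim that this factor ``can be absorbed into $c_f$'' is not a valid move as written: $c_f$ is the fixed constant from {\bf (F1)}, and the lemma's conclusion uses that same $c_f$; you cannot silently enlarge it midproof. What you \emph{can} say is that the lemma as stated appears to be off by a dimensional constant, or that the statement should be read with a $c_f$ that has been chosen large enough to dominate $\|F(x)\|_{\mathrm{op}}/(1+|x|)$. Since the lemma is only ever used qualitatively (to get uniform boundedness of trajectories), this discrepancy is inconsequential downstream, but it is worth phrasing precisely rather than waving it away. Everything else in your write-up --- the integral representation from absolute continuity, the reduction to a scalar integral inequality, and the application of integral Gronwall with the nondecreasing forcing term --- is correct and is the intended argument.
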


We now state the assumptions on the Lagrangian $L: \R^{d} \times \R^{m} \to \R$.
\begin{itemize}
\item[({\bf L1})] There exists a constant $C_{0} \geq 0$ such that 
\begin{equation*}
L(0,u) \leq C_{0}(1+|u|^{2}), \quad \forall\ u \in \R^{m}
\end{equation*}
and $L(x,u)=L(x,-u)$ for any $(x,u) \in \R^{d} \times \R^{m}$. 
%$L \in C^{2}(\R^{d} \times \R^{m})$ is reversible, that is $L(x,u)=L(x,-u)$ for any $(x,u) \in \R^{d} \times \R^{m}$.
\item[{\bf (L2)}] $L \in C^{2}(\R^{d} \times \R^{m})$  and there exist positive constants $\ell_{1}$, $C_1$ such that
\begin{align*}
|D_{x}L(x,u)| \leq\  C_{1}(1+|u|^{2}), & \quad (x,u) \in \R^{d} \times \R^{m},
\\
D^{2}_{u}L(x,u) \geq\  \frac{1}{\ell_{1}}, & \quad (x,u) \in \R^{d} \times \R^{m},
\\
D^{2}_{x}L(x,u) \leq\  \ell_{1}, & \quad (x,u) \in \R^{d} \times \R^{m}.
\end{align*}
%and {\color{red} $L$ is locally semiconcave in space uniformly w.r.t. $u \in \R^{m}$}.
%\item[({\bf L2})] There exists a non-decreasing function $\beta: [0,\infty) \to \R$ such that
%\begin{equation*}
%L(x,u) \leq \beta(|x|)(1+|u|^{2}), \quad (x,u) \in \R^{d} \times \R^{m}.
%\end{equation*}
%for some positive constants $\ell_{2}$ and $\ell_{3}$;
\item[({\bf L3})] There exists a compact set $\mathcal{K}_{L} \subset \R^{d}$ and a constant $\delta_{L} > 0 $ such that
	\begin{equation}\label{eq:gap}
		 \inf_{x \in \R^{d} \backslash \mathcal{K}_{L}} L(x,0) \geq \delta_{L}  + \min_{x \in \mathcal{K}_{L}} L(x,0).
	\end{equation} 
	%\item[{\bf (L4)}] there exists a constant $C_{1} \geq 0$ such that 
	%\begin{equation*}
	%|D_{x}L(x,u)| \leq C_{1}(1+|u|^{2}), \quad (x,u) \in \R^{d} \times \R^{m}	
	%\end{equation*}
%and $L$ is locally semiconcave in space uniformly w.r.t. $u \in \R^{m}$.
\end{itemize}

In view of {\bf (L3)} there exists $x^{*} \in \K_{L}$ such that $L(x^{*},0)=\displaystyle{\min_{x \in \K_{L}}} L(x,0)$. Thus, by {\bf (L1)} and {\bf (L2)}  we have that
\begin{equation}\label{eq:L0}
	 \frac{1}{2\ell_{1}}|u|^{2} + L(x^{*},0) \leq L(x,u) \leq C_{1}(1+|x|)(1+|u|^{2}), \quad \forall\ (x,u) \in \R^{d} \times \R^{m}.
\end{equation}
%where $x^{*} \in \K_{L}$ is such that
%\begin{equation*}
%L(x^{*},0)=\min_{x \in \K_{L}} L(x,0).	
%\end{equation*}
Hereafter we fix $x^{*} \in \K_{L}$ as above and we set 
\begin{equation*}
\delta(x):=d_{\SR}(x,x^{*}). 
\end{equation*}
Let $H: \R^{d} \times \R^{d} \to \R$ be the Hamiltonian associated with $L$, that is, 
\begin{equation}\label{eq:Hamiltonian}
H(x,p)=\sup_{u \in \R^{m}} \left\{ \displaystyle{\sum_{i=1}^{m}} u_{i} \langle p, f_{i}(x)\rangle  - L(x,u) \right\}, \quad \forall\ (x,p) \in \R^{d} \times \R^{d}.
	\end{equation}
	
	We now recall the main results of \cite{bib:CM02} which are the starting point of the current analysis. Consider the following minimization problem: for any $T > 0$ and any $x \in \R^{d}$
 \begin{equation}\label{eq:minimization}
 \text{to minimize}\ \int_{0}^{T}{L(\gamma(s), u(s))\ ds}\,\, \text{over all}\ (\gamma, u) \in \Gamma_{0,T}^{x \to}, 
 \end{equation}
and define the function $V_{T}: \R^{d} \to \R$ by 
\begin{equation}\label{eq:evoValue}
V_{T}(x) = \inf_{(\gamma, u) \in \Gamma_{0,T}^{x \to}} \int_{0}^{T}{L(\gamma(s), u(s))\ ds}, \quad \forall\ x \in \R^{d}.	
\end{equation}
For any $x \in \R^{d}$ we say that a trajectory-control pair $(\gamma, u) \in \Gamma_{0,T}^{x \to}$ is optimal if it solves \eqref{eq:minimization}. Note that the existence of optimal trajectory-control pairs for \eqref{eq:minimization} is a well-known result (see, e.g., \cite[Theorem 7.4.4]{bib:SC}).

By \cite[Theorem 4.2]{bib:CM02} we have that for any $R \geq 0$ there exist two constants $P_{R}$, $Q_{R} \geq 0$ such that, for any $x \in \overline{B}_{R}$, any $T \geq \delta(x)$, and any optimal trajectory-control pair $(\gamma_{x}, u_{x}) \in \Gamma_{0,T}^{x \to}$ for \eqref{eq:minimization} the following holds:
	\begin{equation}\label{eq:l2bounds}
	\int_{0}^{T}{|u_{x}(t)|^{2}\ dt} \leq P_{R}
	\end{equation}
	and
	\begin{equation}\label{eq:compactnesstrajectory}
	|\gamma_{x}(t)| \leq Q_{R}, \quad \forall\ t \in [0,T].	
	\end{equation}	
Moreover, there exists $\mane \in \R$ such that:
\begin{enumerate}
\item for any $R \geq 0$
\begin{equation}\label{eq:maneconstant}
	\lim_{T \to +\infty} \frac{1}{T}V_{T}(x)= \mane, \quad \text{uniformly on}\,\, \overline{B}_{R},
\end{equation}
see \cite[Theorem 5.3]{bib:CM02};
\item the critical equation 
\begin{equation}\label{eq:HJ}
\mane + H(x, D\chi(x))=0, \quad x \in \R^{d},
\end{equation}
has a continuous viscosity solution satisfying \eqref{eq:corrector} below, see \cite[Theorem 5.7]{bib:CM02}. 
\end{enumerate}

%Furthermore, the following representation formula holds
%\begin{equation}\label{eq:representation}
%\chi(x)=\inf_{(\gamma,u) \in \Gamma_{0,t}^{\to x}} \left\{\chi(\gamma(0)) + \int_{0}^{t}{L(\gamma(s), u(s))\ ds} \right\} - \mane t	
%\end{equation}
%for any $(t,x) \in [0,T] \times \R^{d}$ (\cite[Theorem 6.4]{bib:CM02}). 

%{\it We say that $\mane$ is the {\em critical constant} for our problem and $\chi$ is any {\em critical solution} to the ergodic equation \eqref{eq:HJ}. Hereafter, we will always work with critical solutions that satisfy \eqref{eq:representation}}.

Differently from \cite{bib:CM02}, in this paper we also need to assume the following:
\begin{itemize}
	\item[{\bf (S)}] there are no singular minimizing controls of problem \eqref{eq:minimization} different from zero.
	\end{itemize}
The above extra assumption is needed to improve the regularity of viscosity solutions. Indeed, it was proved in \cite[Theorem 1]{bib:CR} that, under {\bf (S)}, the fundamental solution
\begin{equation*}
	A_{t}(x,y)=\inf_{(\gamma, u) \in \Gamma_{0,t}^{x \to y}} \left\{\int_{0}^{t}{L(\gamma(s), u(s))\ ds} \right\}.	
\end{equation*}
 to the critical equation \eqref{eq:HJ} is locally semiconcave as a function of $y \mapsto A_{t}(x,y)$ and, consequently, locally Lipschitz continuous.
 We recall that a function $g: \R^{d} \to \R$ is locally semiconcave if for each compact $\Omega \subset \R^{d}$ there exists a constant $C_{\Omega} \geq 0$ such that 
 \begin{equation*}
 \mu g(x) + (1-\mu) g(y)  - g(\mu x + (1-\mu)y) \leq \mu(1-\mu)C_{\Omega}|x-y|^{2}
 \end{equation*}
for any $\mu \in [0,1]$ and any $x$, $y \in \Omega$. 

\begin{remarks}\label{rem:loclipschitz}\em
\begin{itemize}
	\item[($i$)] Observe that assumption {\bf (S)} is satisfied on sub-Riemannian structures such as the Heisenberg group or Grushin type systems, see for instance \cite[Theorem 5.1]{bib:CR}.
\item[($ii$)] In view of the above assumptions on $L$, we deduce that for any $R \geq 0$ there exists a constant $C_{R} \geq 0$ such that
	\begin{equation}\label{eq:Hcondition}
	|H(x,p) - H(y,p)| \leq C_{R}(1+|p|^{2})|x-y|, \quad \forall\ x, y \in \overline{B}_{R}.	
	\end{equation}
We give the proof of \eqref{eq:Hcondition} for the reader's convenience. First, we claim that, if $u_{x}$ is a maximizer of \eqref{eq:Hamiltonian} associated with $(x,p) \in \R^{d} \times \R^{d}$ then 
	\begin{equation}\label{eq:claim}
	|u_{x}| \leq C(|x|, |p|).	
	\end{equation}
	Indeed, on the one hand, we know that
$
H(x,p) = \langle u_{x}, F^{\star}(x)p \rangle  - L(x,u_{x}).	
$
On the other hand, by definition we have that 
$
H(x,p) \geq - L(x,0).	
$
Therefore, we deduce that
\begin{equation}\label{eq:cri}
L(x,u_{x}) \leq \langle u_{x}, F^{\star}(x)p \rangle + L(x,0) \leq \frac{1}{4\ell_{1}}|u_{x}|^{2} + \ell_{1}^{2}|F^{\star}(x)p|^{2} + C_{1}(1+ |x|). 
\end{equation}
	Since by \eqref{eq:L0} we have that 
	\begin{equation}\label{eq:cri1}
	L(x,u_{x}) \geq \frac{1}{2\ell_{1}}|u_{x}|^{2} + L(x^{*},0),
	\end{equation}
combining \eqref{eq:cri} and \eqref{eq:cri1} we get \eqref{eq:claim}.

We can now prove \eqref{eq:Hcondition}. Fix $R \geq 0$ and for $x \in \overline{B}_{R}$ let $u_{x}$ be a maximizer of \eqref{eq:Hamiltonian}. Then, we have that 
\begin{align*}
H(y,p) - H(x,p) \leq\ & \langle \big(F(y)-F(x)\big)u_{x},p \rangle + L(x,u_{x}) - L(y,u_{x}) 
\\
\leq & C_{1}|y-x||u_{x}||p| + C_{2}|y-x|(|u_{x}|^{2} + 1)	
\end{align*}
which by \eqref{eq:claim} and {\bf (L2)} yields to \eqref{eq:Hcondition}. \qed
\end{itemize}
\end{remarks}

\section{Characterization of the ergodic constant}
\label{sec:ergodicconstant}
 We begin by introducing a class of probability measures that adapts the notion of closed measures to sub-Riemannian control systems.
Set
\begin{equation*}
\PP_{c}^{2}(\R^{d} \times \R^{m}) = \left\{\mu \in \PP(\R^{d} \times \R^{m}) : \int_{\R^{d} \times \R^{m}}{|u|^{2}\ \mu(dx,du)} < +\infty,\ \supp(\pi_{1} \sharp \mu)\, \text{compact} \right\}	
\end{equation*}
where $\pi_{1}: \R^{d} \times \R^{m} \to \R^{d}$ denotes projection onto the first factor, i.e. $\pi_{1}(x,u)= x$. 
%Note that, by Prokhorov's Theorem, $\PP_{c}^{2}(\R^{d} \times \R^{m})$ is compact w.r.t. distance $d_{1}$ . 

Recall that $F(x)=[f_{1}(x)| \dots | f_{m}(x)]$ is the real $d \times m$ matrix introduced in \eqref{eq:dynamics}.

\begin{definition}[{\bf $F$-closed measure}]\label{def:Closedmeasure}
We say that $\mu \in \PP_{c}^{2}(\R^{d} \times \R^{m})$ is an F-closed measure if
	\begin{equation*}
	\int_{\R^{d} \times \R^{m}}{\langle F^{\star}(x)D\varphi(x), u \rangle\ \mu(dx,du)}=0, \quad \forall\ \varphi \in C^{1}(\R^{d}). 	
	\end{equation*}
	We denote by $\C_{F}$ the set of all $F$-closed measures.
\end{definition}

Closed measures were first introduced in \cite{bib:FAS} in order to overcome the lack of regularity of the Lagrangian. Indeed, if $L$ is merely continuous, then there is no Euler-Lagrange flow and, consequently, it is not possible to introduce invariant measures as it is customary, see for instance \cite{bib:FA}. Similarly, in our setting, such a flow does not exist: for this reason, the use of closed measures turns out to be necessary. Moreover, as we will show in the next result, such measures collect the behavior of the minimizing trajectories for the infimum in \eqref{eq:evoValue} as $T \to \infty$. 

We now proceed to construct one closed measure that will be particularly useful to study the Aubry set. Given $x_{0} \in \R^{d}$, for any $T > 0$ let the pair $(\gamma_{x_{0}}, u_{x_{0}}) \in \Gamma_{0,T}^{x_{0} \to}$ be optimal for \eqref{eq:minimization}. Define the probability measure $\mu^{T}_{x_{0}}$ by 
\begin{equation}\label{eq:uniformmeasure}
\int_{\R^{d} \times \R^{m}}{\varphi(x,u)\ \mu^{T}_{x_{0}}(dx,du)}= \frac{1}{T}\int_{0}^{T}{\varphi(\gamma_{x_{0}}(t), u_{x_{0}}(t))\ dt}, \quad \forall\ \varphi \in C_{b}(\R^{d} \times \R^{m}).
\end{equation}
Then, we have the following.  
 
\begin{proposition}\label{prop:nonempty}
Assume {\bf (F1)}, {\bf (F2)} and {\bf (L0)} -- {\bf (L3)}.  Then, $\{ \mu^{T}_{x_{0}}\}_{T>0}$ is tight and there exists a sequence $T_{n} \to \infty$ such that $\mu^{T_{n}}_{x_{0}}$ weakly-$^*$ converges to an $F$-closed measure $\mu^{\infty}_{x_{0}}$.  
\end{proposition}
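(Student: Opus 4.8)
The plan is to split the statement into two parts: tightness of the family $\{\mu^{T}_{x_{0}}\}_{T>0}$, and the closedness of any weak-$^*$ limit point.

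For tightness, the key is that the measures $\mu^{T}_{x_{0}}$ are built from optimal pairs, so the a priori bounds \eqref{eq:l2bounds} and \eqref{eq:compactnesstrajectory} apply. Fix $R$ with $x_{0}\in\overline{B}_{R}$. By \eqref{eq:compactnesstrajectory} we have $|\gamma_{x_{0}}(t)|\leq Q_{R}$ for all $t$ and all $T\geq\delta(x_{0})$, so the $x$-marginal $\pi_{1}\sharp\mu^{T}_{x_{0}}$ is supported in $\overline{B}_{Q_{R}}$ uniformly in $T$; this already handles the first factor. For the control variable, \eqref{eq:l2bounds} gives $\int_{0}^{T}|u_{x_{0}}(t)|^{2}\,dt\leq P_{R}$, hence by the definition \eqref{eq:uniformmeasure},
\begin{equation*}
\int_{\R^{d}\times\R^{m}}|u|^{2}\ \mu^{T}_{x_{0}}(dx,du)=\frac{1}{T}\int_{0}^{T}|u_{x_{0}}(t)|^{2}\,dt\leq\frac{P_{R}}{T}.
\end{equation*}
Thus $\sup_{T\geq\max\{\delta(x_{0}),1\}}\int|u|^{2}\,\mu^{T}_{x_{0}}\leq P_{R}$, and Chebyshev's inequality turns this uniform second-moment bound into tightness in the $u$-variable (for $\eps>0$ choose $\rho$ with $P_{R}/\rho^{2}<\eps$, so $\mu^{T}_{x_{0}}(\{|u|>\rho\})<\eps$). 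Combining the two factors, $\{\mu^{T}_{x_{0}}\}$ is tight on $\R^{d}\times\R^{m}$, and Prokhorov's theorem yields a sequence $T_{n}\to\infty$ with $\mu^{T_{n}}_{x_{0}}\stackrel{w^*}{\longrightarrow}\mu^{\infty}_{x_{0}}\in\PP(\R^{d}\times\R^{m})$. That $\mu^{\infty}_{x_{0}}\in\PP^{2}_{c}$: the compact-support condition on the $x$-marginal passes to the limit because $\pi_{1}\sharp\mu^{T_{n}}_{x_{0}}$ all live in the fixed compact $\overline{B}_{Q_{R}}$ (use \eqref{eq:kuratoski}), and the finite second moment follows from lower semicontinuity of $\mu\mapsto\int\min\{|u|^{2},k\}\,d\mu$ under weak-$^*$ convergence, letting $k\to\infty$.

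For closedness, fix $\varphi\in C^{1}(\R^{d})$. The idea is the standard telescoping/fundamental-theorem-of-calculus computation: along the optimal trajectory, $\frac{d}{dt}\varphi(\gamma_{x_{0}}(t))=\langle D\varphi(\gamma_{x_{0}}(t)),\dot\gamma_{x_{0}}(t)\rangle=\langle D\varphi(\gamma_{x_{0}}(t)),F(\gamma_{x_{0}}(t))u_{x_{0}}(t)\rangle=\langle F^{\star}(\gamma_{x_{0}}(t))D\varphi(\gamma_{x_{0}}(t)),u_{x_{0}}(t)\rangle$ for a.e. $t$, so
\begin{equation*}
\int_{0}^{T_{n}}\langle F^{\star}(\gamma_{x_{0}}(t))D\varphi(\gamma_{x_{0}}(t)),u_{x_{0}}(t)\rangle\,dt=\varphi(\gamma_{x_{0}}(T_{n}))-\varphi(\gamma_{x_{0}}(0)).
\end{equation*}
Dividing by $T_{n}$ and using \eqref{eq:uniformmeasure}, the left-hand side equals $\int_{\R^{d}\times\R^{m}}\langle F^{\star}(x)D\varphi(x),u\rangle\,\mu^{T_{n}}_{x_{0}}(dx,du)$, while the right-hand side is bounded by $2\sup_{\overline{B}_{Q_{R}}}|\varphi|/T_{n}\to 0$ (using again that $\gamma_{x_{0}}$ stays in $\overline{B}_{Q_{R}}$). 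It remains to pass to the limit on the left. Here is the technical point: the integrand $g(x,u):=\langle F^{\star}(x)D\varphi(x),u\rangle$ is not in $C_{b}(\R^{d}\times\R^{m})$ because it grows linearly in $u$, so weak-$^*$ convergence does not immediately apply. I would handle this exactly as in \cite{bib:FAS}: restrict to $\varphi$ with $D\varphi$ compactly supported first (or note $F^{\star}D\varphi$ is bounded on $\overline{B}_{Q_{R}}$, which carries the $x$-marginals), and kill the linear growth in $u$ using the uniform bound $\sup_{n}\int|u|^{2}\,\mu^{T_{n}}_{x_{0}}\leq P_{R}$: for a cutoff $\chi_{\rho}(u)$ equal to $1$ on $\{|u|\leq\rho\}$ and $0$ on $\{|u|\geq 2\rho\}$, the function $g\cdot\chi_{\rho}$ is bounded continuous so $\int g\chi_{\rho}\,d\mu^{T_{n}}_{x_{0}}\to\int g\chi_{\rho}\,d\mu^{\infty}_{x_{0}}$, while the tail $|\int g(1-\chi_{\rho})\,d\mu^{T_{n}}_{x_{0}}|\leq C\int_{\{|u|\geq\rho\}}|u|\,d\mu^{T_{n}}_{x_{0}}\leq C P_{R}/\rho$ uniformly in $n$ (Cauchy--Schwarz: $\int_{\{|u|\ge\rho\}}|u|\,d\mu\le\frac1\rho\int|u|^2\,d\mu$), and similarly for $\mu^{\infty}_{x_{0}}$. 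Letting first $n\to\infty$ then $\rho\to\infty$ gives $\int_{\R^{d}\times\R^{m}}\langle F^{\star}(x)D\varphi(x),u\rangle\,\mu^{\infty}_{x_{0}}(dx,du)=0$. Finally, a general $\varphi\in C^{1}(\R^{d})$ is reduced to the compactly-supported-derivative case by multiplying by a cutoff that is $\equiv 1$ on a neighborhood of $\overline{B}_{Q_{R}}\supseteq\supp(\pi_{1}\sharp\mu^{\infty}_{x_{0}})$, since the defining integral only sees $x$ in that compact set.

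\textbf{Main obstacle.} The only real subtlety is the uniform-integrability argument needed to pass weak-$^*$ limits through the unbounded integrand $\langle F^{\star}(x)D\varphi(x),u\rangle$; everything else (tightness, the telescoping identity, vanishing boundary terms) is routine given the a priori estimates \eqref{eq:l2bounds}--\eqref{eq:compactnesstrajectory} imported from \cite{bib:CM02}. I expect the truncation-plus-second-moment-tail estimate sketched above to resolve it cleanly.
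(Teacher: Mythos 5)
Your proof is correct and follows essentially the same route as the paper: tightness of both marginals from the a priori bounds of \cite{bib:CM02}, Prokhorov, and the telescoping identity $\int_0^{T_n}\langle F^\star(\gamma)D\varphi(\gamma),u\rangle\,dt=\varphi(\gamma(T_n))-\varphi(x_0)$ with the right-hand side vanishing after division by $T_n$ because $\gamma$ stays in $\overline{B}_{Q_{|x_0|}}$. The only substantive differences are that you invoke \eqref{eq:l2bounds} directly for the second-moment bound where the paper re-derives it from \eqref{eq:L0}, and that you spell out the truncation/uniform-integrability step needed to pass the weak-$*$ limit through the $u$-linear (hence unbounded) integrand --- a step the paper's proof leaves implicit, so your version is in fact the more careful one.
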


\proof First, from \eqref{eq:compactnesstrajectory} it follows that $\{ \pi_{1} \sharp \mu^{T}_{x_{0}}\}_{T >0}$ has compact support, uniformly in $T$. Thus, such a family of measures is tight. Let us prove that $\{ \pi_{2} \sharp \mu^{T}\}_{T > 0}$ is also tight.	

On the one hand, by the upper bound in \eqref{eq:L0} we have that
%$(\bar\gamma_{x_{0}}, \bar{u}_{x_{0}}) \in \Gamma_{0,T}^{x_{0} \to}$ such that $(\overline\gamma_{x_{0}}(t), \overline {u}(t)) \equiv (x_{0},0)$, for any $t \in [0,T]$, 
%we have that
	\begin{equation*}
	\frac{1}{T}v^{T}(x_{0}) \leq \frac{1}{T}\int_{0}^{T}{L(x_{0}, 0)\ ds} \leq  \ell_{1}(1+|x_{0}|).
	\end{equation*}
On the other hand, since $(\gamma_{x_{0}}, u_{x_{0}})$ is a minimizing pair for $V_{T}(x_{0})$, by the lower bound in \eqref{eq:L0} we get
\begin{align*}
& \frac{1}{T}v^{T}(x_{0}) = \frac{1}{T}\int_{0}^{T}{L(\gamma_{x_{0}}(t), u_{x_{0}}(t))\ dt} 
\\
= &  \int_{\R^{d} \times \R^{m}}{L(x,u)\ \mu^{T}_{x_{0}}(dx,du)} \geq \int_{\R^{d} \times \R^{m}}{\left(\frac{1}{2\ell_{1}} |u|^{2} + L(x^{*},0)\right)\ \mu^{T}_{x_{0}}(dx,du)}. 	
\end{align*}
Therefore,
\begin{equation*}
\frac{1}{2\ell_{1}}\int_{\R^{d} \times \R^{m}}{|u|^{2}\ \mu^{T}_{x_{0}}(dx,du)} \leq  \ell_{1}(1+|x_{0}|) - L(x^{*},0). 
\end{equation*}
Consequently, the family of probability measures $\{ \pi_{2} \sharp \mu^{T}_{x_{0}}\}_{T >0}$ has bounded (w.r.t. $T$) second order moment. So,  $\{ \pi_{2} \sharp \mu^{T}_{x_{0}}\}_{T >0}$ is tight.

Since $\{\pi_{1} \sharp \mu^{T}_{x_{0}}\}_{T > 0}$ and $\{\pi_{2} \sharp \mu^{T}_{x_{0}}\}_{T > 0}$ are tight, so is $\{\mu^{T}_{x_{0}}\}_{T >0}$ by \cite[Theorem 5.2.2]{bib:AGS}.  Therefore, by Prokhorov's Theorem there exists  $\{T_{n}\}_{n \in \N}$, with $T_{n} \to \infty$, and $\mu^{\infty}_{x_{0}} \in \PP_{c}^{2}(\R^{d} \times \R^{m})$ such that $\mu^{T_{n}}_{x_{0}} \rightharpoonup^{*} \mu^{\infty}_{x_{0}}$.

We now show that $\mu^{\infty}_{x_{0}}$ is an $F$-closed measure, that is,
\begin{equation*}
	\int_{\R^{d} \times \R^{m}}{\langle F^{\star}(x)D\psi(x), u \rangle\ \mu^{\infty}_{x_{0}}(dx,du)}=0 \quad (\psi \in C^{1}(\R^{d})).
\end{equation*}
By definition we have that 
\begin{align*}
& \int_{\R^{d} \times \R^{m}}{\langle F^{\star}(x)D\psi(x), u\rangle \mu_{x_{0}}^{T_{n}}(dx,du)} = \frac{1}{T_{n}}\int_{0}^{T_{n}}{\langle F^{\star}(\gamma_{x_{0}}(t)D\psi(\gamma_{x_{0}}(t)), u_{x_{0}}(t)\rangle\ dt}
\\
=\ & \frac{1}{T_{n}}\int_{0}^{T_{n}}{\langle D\psi(\gamma_{x_{0}}(t), \dot\gamma_{x_{0}}(t)\rangle\ dt} = \frac{\psi(\gamma_{x_{0}}(T_{n})) - \psi(x_{0})}{T_{n}}. 
\end{align*}
Since, $\gamma_{x_{0}}(T_{n}) \in \overline{B}_{Q_{|x_{0}|}}$ in view of \eqref{eq:compactnesstrajectory}, we obtain
\begin{equation*}
\int_{\R^{d} \times \R^{m}}{\langle F^{\star}(x)D\psi(x), u \rangle\ \mu^{\infty}_{x_{0}}(dx,du)} = \lim_{n\to \infty} \frac{\psi(\gamma_{x_{0}}(T_{n})) - \psi(x_{0})}{T_{n}} = 0. \eqno\square
\end{equation*}

		\medskip

	The following property, which is interesting in its own right, will be crucial for the characterization of the critical constant derived in \Cref{thm:main1} below. For any $R \geq 0$, we set
	\begin{equation*}
	\PP_{R}^{2}(\R^{d} \times \R^{m}) = \left\{\mu \in \PP^{2}_{c}(\R^{d} \times \R^{m}) : \supp(\pi_{1} \sharp \mu)\subset \overline{B}_{R} \right\}.
\end{equation*}

	\begin{proposition}\label{prop:minimax}
	Assume {\bf (F1)}, {\bf (F2)} and {\bf (L0)} -- {\bf (L3)}. Then, for any $R \geq Q_{0}$, where $Q_{0}$ is given in \eqref{eq:compactnesstrajectory}, we have that
%	\begin{align}\label{eq:minmax}
%	\begin{split}
%		&  \inf_{\mu \in \PP_{R}^{2}(\R^{d} \times \R^{m})}\sup_{\psi \in C^{1}(\R^{d})}\int_{\R^{d} \times \R^{m}}{\Big(L(x,u)- \langle F^{\star}(x)D\psi(x),u \rangle  \Big)\ \mu(dx,du)}
%		\\
%		= &  \sup_{\psi \in C^{1}(\R^{d})} \inf_{\mu \in \PP_{R}^{2}(\R^{d} \times \R^{m})} \int_{\R^{d} \times \R^{m}}{\Big(L(x,u)- \langle F^{\star}(x)D\psi(x),u \rangle  \Big)\ \mu(dx,du)}
%		\end{split}
%	\end{align}
%	 and 
	\begin{equation}\label{eq:RR}
	\inf_{\mu \in \C_{F} \cap \PP_{R}^{2}(\R^{d} \times \R^{m})} \int_{\R^{d} \times \R^{m}}{L(x,u)\ \mu(dx,du)}=-\inf_{\psi \in C^{1}(\R^{d})} \sup_{x \in \overline{B}_{R}} H(x, D\psi(x)). 	
	\end{equation}
	\end{proposition}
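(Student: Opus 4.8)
The plan is to recognize \eqref{eq:RR} as a min-max duality of Fenchel--Rockafellar type and to prove it by a Hahn--Banach separation argument on a suitable space of measures. First I would rewrite the right-hand side: for fixed $R \geq Q_0$, consider the functional on $C^1(\R^d)$ (or on a space of continuous functions into which it embeds) given by $\psi \mapsto \sup_{x \in \overline{B}_R} H(x, D\psi(x))$; since $H(x,\cdot)$ is convex (being a sup of affine functions in $p$) and $\psi \mapsto D\psi$ is linear, this is a convex functional, and $\inf_\psi \sup_{x}$ is a concave-type optimization. The idea is to introduce the convex set
\begin{equation*}
\mathcal{E} = \left\{ (\varphi, c) \in C(\overline{B}_R) \times \R : \exists\, \psi \in C^1(\R^d),\ \varphi(x) \geq \langle F^\star(x)D\psi(x), \cdot\rangle\text{-data},\ c \geq \sup_{\overline{B}_R} H(x,D\psi(x)) \right\}
\end{equation*}
and separate it from a point, but it is cleaner to work directly on the measure side: by the Riesz representation theorem, $\PP_R^2(\R^d \times \R^m)$ sits inside the dual of $C_b(\overline{B}_R \times \R^m)$, but because of the quadratic growth we must use a weighted space — functions $g(x,u)$ with $g(x,u)/(1+|u|^2)$ bounded and continuous — so that $L$ itself is admissible as a test function by \eqref{eq:L0}.

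The key steps, in order: (1) Show $\geq$ (the easy inequality). For any $\mu \in \C_F \cap \PP_R^2$ and any $\psi \in C^1(\R^d)$, the closedness condition gives $\int \langle F^\star(x)D\psi(x), u\rangle\,\mu = 0$, so
\begin{equation*}
\int L(x,u)\,\mu(dx,du) = \int \big( L(x,u) - \langle F^\star(x)D\psi(x), u\rangle \big)\,\mu(dx,du) \geq \int -H(x, D\psi(x))\,\mu(dx,du) \geq -\sup_{\overline{B}_R} H(x, D\psi(x)),
\end{equation*}
using the definition \eqref{eq:Hamiltonian} of $H$ as a Legendre-type transform and that $\pi_1\sharp\mu$ is supported in $\overline{B}_R$. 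Taking inf over $\mu$ on the left and inf over $\psi$ on the right yields the $\geq$ direction. (2) Show $\leq$ (the hard inequality). Here I would apply a minimax theorem. Consider the function
\begin{equation*}
\Phi(\mu, \psi) = \int_{\R^d \times \R^m} \big( L(x,u) - \langle F^\star(x)D\psi(x), u\rangle \big)\,\mu(dx,du)
\end{equation*}
on $\PP_R^2(\R^d \times \R^m) \times C^1(\R^d)$. It is convex (in fact affine) and weakly-$*$ lower semicontinuous in $\mu$ on the tight — hence, by Prokhorov, weakly-$*$ compact after adding a moment constraint — set $\PP_R^2$, and it is concave (affine) in $\psi$. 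A suitable version of Sion's minimax theorem then gives
\begin{equation*}
\inf_{\mu} \sup_{\psi} \Phi(\mu,\psi) = \sup_{\psi} \inf_{\mu} \Phi(\mu,\psi).
\end{equation*}
The left side equals the left side of \eqref{eq:RR}: indeed $\sup_\psi \Phi(\mu,\psi) = +\infty$ unless $\mu$ is $F$-closed (take $\psi$ and $\lambda\psi$, $\lambda \to \pm\infty$, to detect a nonzero value of the linear functional), and when $\mu \in \C_F$ the $\psi$-term vanishes, leaving $\int L\,d\mu$. The right side equals $\sup_\psi \inf_\mu \Phi(\mu,\psi)$; for fixed $\psi$, minimizing the affine-in-$\mu$ functional over probability measures with $\pi_1\sharp\mu \subset \overline{B}_R$ gives $\inf_{x \in \overline{B}_R}\inf_{u \in \R^m}\big(L(x,u) - \langle F^\star(x)D\psi(x),u\rangle\big) = \inf_{x \in \overline{B}_R}\big(-H(x,D\psi(x))\big) = -\sup_{\overline{B}_R}H(x,D\psi(x))$, where the inner inf over $u$ is attained thanks to the coercivity \eqref{eq:cri1} (uniform in $x \in \overline{B}_R$) which also guarantees the minimizing $\mu$ can be taken in $\PP_R^2$, i.e. with finite second $u$-moment. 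Taking $\sup$ over $\psi$ recovers the right side of \eqref{eq:RR}.

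The main obstacle is making the minimax step rigorous in this non-compact, non-reflexive setting: $C^1(\R^d)$ is not a nice topological vector space for minimax purposes, and $\PP_R^2$ needs the right topology (weak-$*$ plus uniform integrability of $|u|^2$) so that both the compactness of the $\mu$-side and the lower semicontinuity of $\Phi(\cdot,\psi)$ hold simultaneously. I would handle this by (a) restricting $\psi$ to a convex subset on which the relevant $\sup$ is unchanged — e.g., one may first mollify and truncate, using the cut-off functions $\xi_R$ and mollifiers $\xi^\eps$ from the preliminaries, so that it suffices to range over $\psi \in C_b^2$ or even smooth compactly supported perturbations of a fixed function, which is a normed space; and (b) adding an a priori moment bound $\int |u|^2\,\mu \leq M$ for $M$ large (justified because any near-optimal $\mu$ on the left of \eqref{eq:RR} satisfies such a bound by the argument in \Cref{prop:nonempty}, via \eqref{eq:L0}), which makes the constraint set weakly-$*$ compact by Prokhorov and makes $\mu \mapsto \int L\,d\mu$ and $\mu \mapsto \int \langle F^\star D\psi, u\rangle\,d\mu$ weakly-$*$ continuous on it since $L$ and the bracket grow at most quadratically in $u$ and we have uniform integrability. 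With these reductions in place, a standard Sion-type minimax theorem (convex-lsc in $\mu$ on a compact convex set, concave-usc, here affine, in $\psi$) applies and closes the argument. A secondary point to check carefully is that the inner inf over $u$ genuinely produces $-H$ rather than something smaller — this is exactly the definition \eqref{eq:Hamiltonian}, but one must verify the sup in \eqref{eq:Hamiltonian} is attained (so it is a max), which follows from strict convexity and coercivity of $L(x,\cdot)$ in \textbf{(L2)}, as already recorded in \eqref{eq:claim}.
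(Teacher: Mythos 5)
Your proposal is correct and follows essentially the same route as the paper: both recognize that $\sup_{\psi}\int(L - \langle F^\star D\psi, u\rangle)\,d\mu$ acts as the indicator of $\C_F$ (the paper isolates this as \Cref{lem:minmaxprel}), both then invoke a minimax theorem to swap $\inf_\mu$ and $\sup_\psi$ (the paper uses a version whose hypotheses are verified exactly by the Prokhorov-compactness of a sublevel set and the lower semicontinuity from \Cref{lem:weakconv} — the same two points you flag), and both evaluate the inner infimum over $\mu$ by Dirac masses to recover $-\max_{x\in\overline{B}_R}H(x,D\psi(x))$. Your explicit ``easy $\geq$'' step is weak duality, implicit in the minimax argument, and your concerns about the topology on $C^1(\R^d)$ are real but handled in the paper by choosing a minimax theorem that requires compactness only on the $\mu$ side.
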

	
	The following two lemmas are needed for the proof of \Cref{prop:minimax}.

	\begin{lemma}\label{lem:minmaxprel}
		Assume {\bf (F1)}, {\bf (F2)} and {\bf (L0)} -- {\bf (L3)}. Then, for any $R \geq Q_{0}$, where $Q_{0}$ is given in \eqref{eq:compactnesstrajectory}, we have that 
		\begin{align}\label{eq:hand1}
	\begin{split}
	& \inf_{\mu \in \C_{F} \cap \PP_{R}^{2}(\R^{d} \times \R^{m})} \int_{\R^{d} \times \R^{m}}{L(x,u)\ \mu(dx,du)}	
	\\
	= &  \inf_{\mu \in \PP_{R}^{2}(\R^{d} \times \R^{m})}\sup_{\psi \in C^{1}(\R^{d})}\int_{\R^{d} \times \R^{m}}{\Big(L(x,u) - \langle F^{\star}(x)D\psi(x),u \rangle  \Big)\ \mu(dx,du)}.
	\end{split}
	\end{align}
   \end{lemma}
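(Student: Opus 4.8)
\textbf{Proof plan for Lemma~\ref{lem:minmaxprel}.}
The identity is a minimax/duality statement, and the natural strategy is to recognize the left-hand side as the result of \emph{enforcing} the closedness constraint on the right-hand side via a Lagrange-multiplier (penalization) argument. Fix $R \geq Q_0$ and abbreviate $\PP_R := \PP_R^2(\R^d\times\R^m)$. For $\mu \in \PP_R$ consider the inner supremum
\begin{equation*}
S(\mu) := \sup_{\psi \in C^1(\R^d)} \int_{\R^d\times\R^m}\Big(L(x,u) - \langle F^\star(x)D\psi(x), u\rangle\Big)\ \mu(dx,du).
\end{equation*}
The first observation is that $S(\mu)$ is a $\{0,+\infty\}$-type indicator up to the constant $\int L\,d\mu$: since $C^1(\R^d)$ is a linear space and $\psi \mapsto \int \langle F^\star D\psi, u\rangle\,d\mu$ is linear in $\psi$, the supremum over all $\psi$ of $-\int\langle F^\star D\psi,u\rangle\,d\mu$ equals $0$ if $\mu$ is $F$-closed and $+\infty$ otherwise (replace $\psi$ by $t\psi$ and let $t\to\pm\infty$). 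Hence
\begin{equation*}
S(\mu) = \begin{cases} \int_{\R^d\times\R^m} L(x,u)\ \mu(dx,du) & \text{if } \mu \in \C_F,\\[2pt] +\infty & \text{otherwise,}\end{cases}
\end{equation*}
and taking the infimum over $\mu \in \PP_R$ gives exactly the left-hand side of \eqref{eq:hand1}. This already proves the ``$\geq$'' direction trivially and reduces the whole lemma to justifying that this pointwise-in-$\mu$ computation of $S(\mu)$ is legitimate, i.e.\ that the supremum is genuinely $+\infty$ (not merely an unbounded-above expression truncated by some hidden constraint) when $\mu \notin \C_F$, and that when $\mu \in \C_F$ no better (smaller) value is achievable by the supremum — which is immediate since the $\psi\equiv 0$ choice already gives $\int L\,d\mu$ and every other term integrates to $0$.

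The only genuine subtlety is \textbf{well-posedness / finiteness of the integrals}: I need $\int |u|^2\,d\mu < \infty$ (true by definition of $\PP_R^2$), $\int L(x,u)\,d\mu < \infty$, and $\int |\langle F^\star(x)D\psi(x),u\rangle|\,d\mu < \infty$ for each $\psi \in C^1(\R^d)$. For the last two, I would use that $\supp(\pi_1\sharp\mu) \subset \overline{B}_R$ is compact: on $\overline{B}_R$ the matrix $F$ is bounded (by {\bf (F1)}, $|f_i(x)| \leq c_f(1+R)$), $D\psi$ is bounded (continuity of $D\psi$ on the compact set $\overline{B}_R$), and $L(x,u) \leq C_1(1+R)(1+|u|^2)$ by the upper bound in \eqref{eq:L0}; combined with $\int|u|^2\,d\mu<\infty$ and Cauchy--Schwarz for the bilinear term, all integrals are finite. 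This is the ``main obstacle'' only in the bookkeeping sense — it is what makes the $\PP_R^2$ (compactly supported, $L^2$ in $u$) class the right ambient space for the argument to run cleanly.

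\textbf{Order of steps.} First I would record the finiteness estimates above so that every integral appearing is well-defined. Second, I would carry out the Lagrangian-duality computation: for fixed $\mu \in \PP_R$, evaluate $S(\mu)$ by exploiting linearity in $\psi$, concluding $S(\mu) = \int L\,d\mu$ if $\mu\in\C_F$ and $S(\mu)=+\infty$ otherwise — in the non-closed case, pick $\psi_0$ with $\int\langle F^\star D\psi_0,u\rangle\,d\mu =: a \neq 0$ and note $\int(L - \langle F^\star D(t\,\mathrm{sgn}(a)\psi_0),u\rangle)\,d\mu = \int L\,d\mu - t|a| \to +\infty$ as $t\to+\infty$ (here using that $\int L\,d\mu$ is a \emph{finite} constant, again by the first step). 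Third, take $\inf_{\mu\in\PP_R}$ of both sides: the right-hand side of \eqref{eq:hand1} becomes $\inf\{\int L\,d\mu : \mu \in \C_F \cap \PP_R\}$, which is the left-hand side, completing the proof. No interchange of $\inf$ and $\sup$ is actually required here — the equality holds ``for free'' once $S(\mu)$ is identified — which is why this lemma is separated from Proposition~\ref{prop:minimax}, where the real minimax exchange (and the passage to $-\inf_\psi\sup_x H$) takes place.
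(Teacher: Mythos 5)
Your argument is essentially the paper's own: reformulate the constraint $\mu \in \C_F$ as a penalization $\omega(\mu) = \sup_{\psi} \bigl(-\int \langle F^\star(x)D\psi(x), u\rangle\ d\mu\bigr)$, which by linearity in $\psi$ equals $0$ on $\C_F$ and $+\infty$ off it, and observe that no $\inf$--$\sup$ interchange is needed — that is exactly how the paper separates this lemma from the genuine minimax step in Proposition~\ref{prop:minimax}. Your extra bookkeeping on finiteness of the integrals (via compact support, \textbf{(F1)}, and \eqref{eq:L0}) is sound and makes explicit what the paper compresses into ``since $L$ is bounded below.'' One small sign slip: with $a := \int \langle F^\star D\psi_0, u\rangle\,d\mu \neq 0$, taking $\psi = t\,\mathrm{sgn}(a)\psi_0$ gives $\int L\,d\mu - t|a| \to -\infty$ as $t\to+\infty$; you want $\psi = -t\,\mathrm{sgn}(a)\psi_0$ (equivalently, send $t\to-\infty$) so that the expression tends to $+\infty$.
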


\noindent The proof of the above lemma is based on an argument which is quite common in optimal transport theory (see, e.g., \cite[Theorem 1.3]{bib:CV}). We give the reasoning for the reader's convenience. 

\proof
Since $L$ is bounded below we have that
	\begin{align*}
		& \inf_{\mu \in \C_{F} \cap \PP_{R}^{2}(\R^{d} \times \R^{m})} \int_{\R^{d} \times \R^{m}}{L(x,u)\ \mu(dx,du)}	
	\\
	= &  \inf_{\mu \in \PP_{R}^{2}(\R^{d} \times \R^{m})}\left\{\int_{\R^{d} \times \R^{m}}{L(x,u)\ \mu(dx,du)} + \omega(\mu)\right\}
	\end{align*}
where 
\begin{align*}
	\omega(\mu)=
	\begin{cases}
		0, & \quad \mu \in \C_{F}
		\\
		+\infty, & \quad \mu \not\in \C_{F}.
	\end{cases}
\end{align*}
So, observing that
\begin{equation*}
	\omega(\mu) = \sup_{\psi \in C^{1}(\R^{d})}-\int_{\R^{d} \times \R^{m}}{\langle F^{\star}(x)D\psi(x), u\rangle\ \mu(dx,du)} 
\end{equation*}
we obtain \eqref{eq:hand1}. \qed

%\begin{remarks}\label{rem:p2r}\em
% By using the same reasoning as in the proof of \Cref{lem:minmaxprel}, one can show that for any $R \geq 0$,
%\begin{align}\label{eq:hand2}
%	\begin{split}
%	& \inf_{\mu \in \C_{\ff} \cap \PP_{R}^{2}(\R^{d} \times \R^{m})} \int_{\R^{d} \times \R^{m}}{L(x,u)\ \mu(dx,du)}	
%	\\
%	= &  \inf_{\mu \in \PP_{R}^{2}(\R^{d} \times \R^{m})}\sup_{\psi \in C^{1}(\R^{d})}\int_{\R^{d} \times \R^{m}}{\Big(L(x,u) - \langle F^{\star}(x)D\psi(x),u \rangle  \Big)\ \mu(dx,du)}.
%	\end{split}
%\end{align}
%\end{remarks}

\begin{lemma}\label{lem:weakconv}
Let $\phi \in C(\R^{d} \times \R^{m})$ be such that $$\phi_{0} \leq \phi(x,u) \leq C_{\phi}(1+|u|^{2}), \quad \forall\ (x,u) \in \R^{d} \times \R^{m}$$
for some constants $\phi_{0} \in \R$ and $C_{\phi} \geq 0$. Let $\{\mu_{j}\}_{j \in \N} \in \PP^{2}(\R^{d} \times \R^{m})$ and let $\mu \in \PP^{2}(\R^{d} \times \R^{m})$ be such that $\mu_{j} \rightharpoonup^{*} \mu$ as $j \to \infty$. Then, we have that
\begin{equation}\label{eq:liminfineq}
	\liminf_{j \to \infty} \int_{\R^{d} \times \R^{m}}{\phi(x,u)\ \mu_{j}(dx,du)} \geq \int_{\R^{d} \times \R^{m}}{\phi(x,u)\ \mu(dx,du)}. 
\end{equation}
%where $\mu^{T}_{x_{0}}$ and $\mu^{\infty}_{x_{0}}$ are given in \Cref{prop:nonempty}. 
\end{lemma}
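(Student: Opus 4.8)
The plan is to prove the lower semicontinuity inequality \eqref{eq:liminfineq} by the classical truncation argument, exploiting only the lower bound $\phi \geq \phi_0$ and the weak-$^*$ convergence of the measures; the quadratic upper bound is not needed here (it is what guarantees the relevant integrals are finite and will be used elsewhere). First I would reduce to the case $\phi_0 = 0$ by replacing $\phi$ with $\phi - \phi_0 \geq 0$: since each $\mu_j$ and $\mu$ are probability measures, subtracting the constant $\phi_0$ changes both sides of \eqref{eq:liminfineq} by exactly $\phi_0$, so the inequality is unaffected. Hence we may assume $\phi \geq 0$.

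Next, for each $k \in \N$ set $\phi_k := \min\{\phi, k\} = \phi \wedge k$. Each $\phi_k$ is continuous, nonnegative, and bounded (by $k$), hence $\phi_k \in C_b(\R^d \times \R^m)$, so by the very definition of weak-$^*$ convergence,
\begin{equation*}
\lim_{j \to \infty} \int_{\R^d \times \R^m} \phi_k(x,u)\ \mu_j(dx,du) = \int_{\R^d \times \R^m} \phi_k(x,u)\ \mu(dx,du).
\end{equation*}
Since $\phi \geq \phi_k \geq 0$ pointwise, monotonicity of the integral gives
\begin{equation*}
\liminf_{j \to \infty} \int_{\R^d \times \R^m} \phi(x,u)\ \mu_j(dx,du) \geq \liminf_{j \to \infty} \int_{\R^d \times \R^m} \phi_k(x,u)\ \mu_j(dx,du) = \int_{\R^d \times \R^m} \phi_k(x,u)\ \mu(dx,du)
\end{equation*}
for every $k$. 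Finally, letting $k \to \infty$, the functions $\phi_k$ increase pointwise to $\phi$, so by the monotone convergence theorem the right-hand side converges to $\int_{\R^d \times \R^m} \phi(x,u)\ \mu(dx,du)$, which yields \eqref{eq:liminfineq}.

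I do not expect any serious obstacle here: the argument is a textbook proof of the Portmanteau-type lower semicontinuity of integrals of lower-bounded lower semicontinuous (here, continuous) functionals under weak-$^*$ convergence. The only point requiring a modicum of care is the reduction step ensuring that the additive constant $\phi_0$ drops out cleanly, which relies crucially on all measures involved being probability measures (total mass one), and the observation that $\min\{\phi, k\}$ is genuinely in $C_b$ so that the definition of $\rightharpoonup^*$ applies directly. One could alternatively invoke \cite[Lemma 5.1.7]{bib:AGS} or the Portmanteau theorem as a black box, but the self-contained truncation argument above is short enough to include in full.
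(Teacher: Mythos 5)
Your proof is correct, and it takes a genuinely different route from the paper's. The paper proves the $\phi_0 = 0$ case by writing $\int \phi\,d\mu_j = \int \frac{\phi}{1+\eps|u|^2}(1+\eps|u|^2)\,d\mu_j \geq \int \frac{\phi}{1+\eps|u|^2}\,d\mu_j$, observing that the quadratic upper bound $\phi \leq C_\phi(1+|u|^2)$ makes $\frac{\phi}{1+\eps|u|^2}$ a bounded continuous function (so weak-$^*$ convergence applies), and then sending $\eps \downarrow 0$; the general $\phi_0$ case follows by the same add-and-subtract reduction you use. Your truncation by $\phi_k = \min\{\phi,k\}$ is the more standard Portmanteau-type argument and, as you correctly point out, never invokes the upper growth bound at all --- it works for any continuous function bounded below. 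This makes your proof slightly more general and more elementary; the paper's variant is tailored to the quadratic-growth setting and implicitly reuses the upper bound that is already at hand. Both rely on the same two ingredients in the end: weak-$^*$ convergence tested against a bounded continuous truncation, followed by monotone convergence. The reduction to $\phi_0 = 0$ via probability measures having unit mass is identical in both. No gaps in your argument.
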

\proof
We first prove \eqref{eq:liminfineq} assuming $\phi_{0} = 0$. For any $\eps > 0$ we have that
		\begin{align*}
		& \int_{\R^{d} \times \R^{m}}{\phi(x,u)\ \mu^{T_{j}}_{x_{0}}(dx,du)} = 		\int_{\R^{d} \times \R^{m}}{\frac{\phi(x,u)}{1+\eps|u|^{2}}(1+\eps|u|^{2})\ \mu^{T_{j}}_{x_{0}}(dx,du)}
		\\
		\geq &  \int_{\R^{d} \times \R^{m}}{\frac{\phi(x,u)}{1+\eps|u|^{2}}\ \mu^{T_{j}}_{x_{0}}(dx,du)}.
		\end{align*}
       The growth assumption on $\phi$ ensures that the function $(x, u) \mapsto \frac{\phi(x,u)}{1+\eps|u|^{2}}$ is bounded. So,
       \begin{equation*}
       	\liminf_{j \to +\infty}\int_{\R^{d} \times \R^{m}}{\phi(x,u)\ \mu^{T_{j}}_{x_{0}}(dx,du)} \geq \int_{\R^{d} \times \R^{m}}{\frac{\phi(x,u)}{1+\eps|u|^{2}}\ \mu^{\infty}_{x_{0}}(dx,du)}.
       \end{equation*}
Therefore, as $\eps \downarrow 0$ we obtain \eqref{eq:liminfineq}. 

Now, in order to treat the general case of $\phi_{0} \not= 0$, we observe that
\begin{align*}
		& \int_{\R^{d} \times \R^{m}}{[(\phi(x,u)-\phi_{0}) + \phi_{0}]\ \mu^{T_{j}}_{x_{0}}(dx,du)} 
		\\
		= & 	\int_{\R^{d} \times \R^{m}}{\frac{\phi(x,u)-\phi_{0}}{1+\eps|u|^{2}}(1+\eps|u|^{2})\ \mu^{T_{j}}_{x_{0}}(dx,du)} + \phi_{0}
		\\
		\geq &  \int_{\R^{d} \times \R^{m}}{\frac{\phi(x,u)-\phi_{0}}{1+\eps|u|^{2}}\ \mu^{T_{j}}_{x_{0}}(dx,du)}+ \phi_{0}.
		\end{align*}
Thus, we obtain
 \begin{equation*}
       	\liminf_{j \to +\infty}\int_{\R^{d} \times \R^{m}}{\phi(x,u)\ \mu^{T_{j}}_{x_{0}}(dx,du)} \geq \int_{\R^{d} \times \R^{m}}{\frac{\phi(x,u)-\phi_{0}}{1+\eps|u|^{2}}\ \mu^{\infty}_{x_{0}}(dx,du)} + \phi_{0}
       \end{equation*} 
       which in turn yields the result as $\eps \downarrow 0$.
 \qed

\medskip
\noindent{\it Proof of \Cref{prop:minimax}.}
We divide the proof into two steps. 

\medskip
\noindent {\bf (1).}  We will prove that 
	\begin{align}\label{eq:claimminimax}
	\begin{split}
		&  \inf_{\mu \in \PP_{R}^{2}(\R^{d} \times \R^{m})}\sup_{\psi \in C^{1}(\R^{d})}\int_{\R^{d} \times \R^{m}}{\Big(L(x,u)- \langle F^{\star}(x)D\psi(x),u \rangle  \Big)\ \mu(dx,du)}
		\\
		= &  \sup_{\psi \in C^{1}(\R^{d})} \inf_{\mu \in \PP_{R}^{2}(\R^{d} \times \R^{m})} \int_{\R^{d} \times \R^{m}}{\Big(L(x,u)- \langle F^{\star}(x)D\psi(x),u \rangle  \Big)\ \mu(dx,du)}. 
		\end{split}
	\end{align}
For the proof of \eqref{eq:claimminimax} we will apply the Minimax Theorem (\cite[Theorem A.1]{bib:OPA}) to the functional 
\[
\mathcal{F}: C^{1}(\R^{d}) \times \PP_{R}^{2}(\R^{d} \times \R^{m}) \to \R
\]
defined by
	\begin{equation*}
	\mathcal{F}(\psi, \mu) = \int_{\R^{d} \times \R^{m}}{\Big(L(x,u)-\langle F^{\star}(x)D\psi(x), u \rangle \Big)\ \mu(dx,du)}.
	\end{equation*}
	To do so, define
	\begin{align*}
	c^{*}=1+ L(x^{*},0). 	
	\end{align*}
In order to check that the assumptions of the Minimax Theorem are satisfied we have to prove the following.
 \begin{itemize}
 \item[($i$)] $\mathcal{E}:=\big\{\mu \in \PP_{R}^{2}(\R^{d} \times \R^{m}):\ \mathcal{F}(0,\mu) \leq c^{*} \big\}$ is compact in  $\big(\PP^{2}_{R}(\R^{d} \times \R^{m}), d_{1}\big)$.   Indeed, for any given $\mu \in \PP_{R}^{2}(\R^{d} \times \R^{m})$ we know that the support of $\pi_{1} \sharp \mu$ is contained in $\overline{B}_{R}$. Moreover, the coercivity of $L$ implies that for any $\mu \in \mathcal{E}$ we have that the family $\{\pi_{2} \sharp \mu\}_{\mu \in \mathcal{E}}$ has bounded second moment and, therefore, it is tight. Thus, $\mu$ is tight by \cite[Theorem 5.2.2]{bib:AGS} and  $\mathcal{E}$ is compact by Prokhorov's Theorem.
 \item[($ii$)] The map $\mu \mapsto \mathcal{F}(\psi, \mu)$ is lower semicontinuous on $\mathcal{E}$ for any $\psi \in C^{1}(\R^{d})$. This fact follows from \Cref{lem:weakconv}.
 \end{itemize}

Therefore, by the Minimax Theorem we  obtain \eqref{eq:claimminimax}.	
	
	\medskip
\noindent{\bf (2).} In order to complete the proof we observe that, in view of \eqref{eq:hand1} and \eqref{eq:claimminimax}, 
 \begin{align*}
 	& \inf_{\mu \in \C_{F} \cap \PP_{R}^{2}(\R^{d} \times \R^{m})} \int_{\R^{d} \times \R^{m}}{L(x,u)\ \mu(dx,du)}
 	\\
 	= & \inf_{\mu \in \PP_{R}^{2}(\R^{d} \times \R^{m})}\sup_{\psi \in C^{1}(\R^{d})}\int_{\R^{d} \times \R^{m}}{\Big(L(x,u)- \langle F^{\star}(x)D\psi(x),u \rangle  \Big)\ \mu(dx,du)}
 	\\
 	= &  \sup_{\psi \in C^{1}(\R^{d})} \inf_{\mu \in \PP_{R}^{2}(\R^{d} \times \R^{m})} \int_{\R^{d} \times \R^{m}}{\Big(L(x,u)- \langle F^{\star}(x)D\psi(x),u \rangle  \Big)\ \mu(dx,du)}.
%	=\ & \sup_{\psi \in C^{1}(\R^{d})} \inf_{R > 0} \inf_{\mu \in \PP_{R}^{2}(\R^{d} \times \R^{m})} \int_{\R^{d} \times \R^{m}}{\Big(L(x,u)- \langle F^{\star}(x)D\psi(x),u \rangle  \Big)\ \mu(dx,du)}
%	\\\
\end{align*}
Now, the coercivity of $L$ ensures the existence of 
\begin{equation*}
 \min_{(x,u) \in \overline{B}_{R} \times \R^{m}}\Big\{ L(x,u)- \langle F^{\star}(x)D\psi(x),u \rangle \Big\}.
\end{equation*}
Therefore, by taking a Dirac mass centered at any pair $(\overline{x}, \overline{u})$ at which the above minimum is attained, one deduces that
\begin{align*}
&  \sup_{\psi \in C^{1}(\R^{d})} \inf_{\mu \in \PP_{R}^{2}(\R^{d} \times \R^{m})} \int_{\R^{d} \times \R^{m}}{\Big(L(x,u)- \langle F^{\star}(x)D\psi(x),u \rangle  \Big)\ \mu(dx,du)}
\\
 =\	& \sup_{\psi \in C^{1}(\R^{d})} \min_{(x,u) \in \overline{B}_{R} \times \R^{m}}\Big\{ L(x,u)- \langle F^{\star}(x)D\psi(x),u \rangle \Big\}.	
\\
	 	=\ & \sup_{\psi \in C^{1}(\R^{d})} \left(-\max_{(x,u) \in \overline{B}_{R} \times \R^{m}}\Big\{ \langle F^{\star}(x)D\psi(x),u \rangle - L(x,u)  \Big\}\right)
	\\
	=\ & - \inf_{\psi \in C^{1}(\R^{d})} \max_{(x,u) \in \overline{B}_{R} \times \R^{m}}\Big\{  \langle F^{\star}(x)D\psi(x),u \rangle - L(x,u) \Big\}
	\\
 	= &   - \inf_{\psi \in C^{1}(\R^{d})} \max_{ x \in \overline{B}_{R}}\  H(x, D\psi(x)).
 \end{align*}
We observe that the last inequality holds because 
\begin{align*}
& \max_{(x,u) \in \overline{B}_{R} \times \R^{m}}\Big\{\langle F^{\star}(x)D\psi(x), u \rangle - L(x,u) \Big\}
\\
=\ & \max_{x \in \overline{B}_{R}} \sup_{u \in \R^{m}}\Big\{\langle F^{\star}(x)D\psi(x), u \rangle - L(x,u) \Big\} =  \max_{x \in \overline{B}_{R}} H(x, D\psi(x)). 
\end{align*}
This completes the proof. \qed

%{\color{red} Resume from here!}

\medskip

The following characterization of the critical value is essential for the analysis in \Cref{sec:aubrymather}.

\begin{theorem}\label{thm:main1}
Assume {\bf (F1)}, {\bf (F2)}, {\bf (L0)} -- {\bf (L3)} and {\bf (S)}. Then, we have that
\begin{equation}\label{eq:mumane}
\mane  = \min_{\mu \in \C_{F}} \int_{\R^{d} \times \R^{m}}{L(x,u)\ \mu(dx,du)}. 
\end{equation}
%\begin{equation}\label{eq:mumane}
%\mane=\inf_{\mu \in \C_{\ff}} \int_{\R^{d} \times \R^{m}}{L(x,u)\ \mu(dx,du)}. 	
%\end{equation}
\end{theorem}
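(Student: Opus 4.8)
The plan is to prove separately the two inequalities between $\mane$ and $\inf_{\mu\in\C_F}\int L\, d\mu$, using the duality identity of \Cref{prop:minimax} for the lower bound and the explicit closed measure of \Cref{prop:nonempty} for the upper bound; along the way this also shows the infimum is a minimum.

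\textbf{Upper bound $\inf_{\mu\in\C_F}\int L\, d\mu\le\mane$.} First I would fix $x_0\in\R^d$ and take the $F$-closed measure $\mu^\infty_{x_0}$ constructed in \Cref{prop:nonempty} as a weak-$*$ limit of the empirical measures $\mu^{T_n}_{x_0}$ attached to optimal pairs $(\gamma_{x_0},u_{x_0})\in\Gamma^{x_0\to}_{0,T_n}$ for \eqref{eq:minimization}. By optimality and \eqref{eq:maneconstant},
\[
\int_{\R^d\times\R^m}L(x,u)\, \mu^{T_n}_{x_0}(dx,du)=\frac1{T_n}\int_0^{T_n}L(\gamma_{x_0}(t),u_{x_0}(t))\, dt=\frac1{T_n}V_{T_n}(x_0)\longrightarrow\mane\quad(n\to\infty).
\]
By \eqref{eq:compactnesstrajectory} the first marginals of all the $\mu^{T_n}_{x_0}$, hence that of $\mu^\infty_{x_0}$, lie in the fixed ball $\overline{B}_{Q_{|x_0|}}$; replacing $L$ by a continuous function that agrees with it on $\overline{B}_{Q_{|x_0|}}\times\R^m$ and enjoys the growth required by \Cref{lem:weakconv} (using $L\ge L(x^*,0)$ and $L(x,u)\le C_1(1+|x|)(1+|u|^2)$ from \eqref{eq:L0}), lower semicontinuity gives
\[
\int_{\R^d\times\R^m}L(x,u)\, \mu^\infty_{x_0}(dx,du)\le\liminf_{n\to\infty}\int_{\R^d\times\R^m}L(x,u)\, \mu^{T_n}_{x_0}(dx,du)=\mane .
\]
This proves the inequality and shows that the infimum is attained at $\mu^\infty_{x_0}$.

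\textbf{Lower bound $\inf_{\mu\in\C_F}\int L\, d\mu\ge\mane$.} For $R\ge Q_0$, \Cref{prop:minimax} reads $\inf_{\mu\in\C_F\cap\PP^2_R(\R^d\times\R^m)}\int L\, d\mu=-\inf_{\psi\in C^1(\R^d)}\sup_{x\in\overline{B}_{R}}H(x,D\psi(x))$, so it suffices to produce, for each $R$ and $\eps>0$, a $\psi\in C^1(\R^d)$ with $\sup_{\overline{B}_{R}}H(\cdot,D\psi)\le-\mane+\eps$. I would start from a critical viscosity solution $\chi$ of \eqref{eq:HJ} (which exists by \cite[Theorem 5.7]{bib:CM02} and, under {\bf (S)}, is locally semiconcave, hence locally Lipschitz, by \cite[Theorem 1]{bib:CR}); being a subsolution, $\chi$ satisfies $\mane+H(x,D\chi(x))\le0$ at a.e.\ $x$. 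Mollifying, $\psi_\eps:=\chi\star\xi^\eps$ is smooth with $D\psi_\eps(x)=\int_{B_\eps}D\chi(x-y)\xi^\eps(y)\, dy$, and by convexity of $p\mapsto H(x,p)$, Jensen's inequality, and the modulus \eqref{eq:Hcondition} (applied on $\overline{B}_{R+1}$, with $|y|\le\eps\le1$ and $|D\chi(x-y)|\le\kappa_R:=\mathrm{Lip}(\chi;\overline{B}_{R+1})$), one gets for $x\in\overline{B}_{R}$
\[
H(x,D\psi_\eps(x))\le\int_{B_\eps}H(x-y,D\chi(x-y))\xi^\eps(y)\, dy+C_{R+1}(1+\kappa_R^2)\eps\le-\mane+C_{R+1}(1+\kappa_R^2)\eps .
\]
Letting $\eps\downarrow0$ yields $\inf_{\psi}\sup_{\overline{B}_{R}}H(\cdot,D\psi)\le-\mane$, hence $\inf_{\mu\in\C_F\cap\PP^2_R}\int L\, d\mu\ge\mane$ for all $R\ge Q_0$; since every $\mu\in\C_F$ has compact first marginal, it lies in some $\C_F\cap\PP^2_R$ with $R\ge Q_0$, and $\inf_{\C_F}\int L\, d\mu\ge\mane$ follows.

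Combining the two bounds gives \eqref{eq:mumane}, the minimum being attained at any $\mu^\infty_{x_0}$. I expect the lower bound to be the main obstacle: one must turn the merely a.e.\ subsolution inequality for the non-Tonelli, only locally Lipschitz solution $\chi$ into a genuinely pointwise, uniform-on-$\overline{B}_{R}$ estimate for a $C^1$ competitor, and this is exactly where the mollification, the convexity of $H$ in the momentum variable, and the quantitative $x$-dependence \eqref{eq:Hcondition} must be used together. A minor technical point on the upper-bound side is reconciling the $x$-dependent growth of $L$ with the global growth hypothesis of \Cref{lem:weakconv}, which the truncation of $L$ outside $\overline{B}_{Q_{|x_0|}}$ handles.
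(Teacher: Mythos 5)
Your proof is correct and follows essentially the same two-sided argument as the paper: the upper bound comes from the $F$-closed limit measure $\mu^\infty_{x_0}$ together with the lower-semicontinuity lemma, and the lower bound comes from the minimax identity (\Cref{prop:minimax}) combined with the mollified critical solution $\chi_\eps$ (your inline mollification argument is exactly the paper's \Cref{lem:smoothapprox}). Your handling of the passage from $\C_F\cap\PP^2_R$ to all of $\C_F$ is a bit more direct than the paper's Step 2, and you also flag the truncation needed to invoke \Cref{lem:weakconv} for $L$, but these are minor refinements of the same route.
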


Once again, we need a lemma for the proof of \Cref{thm:main1}. 

\begin{lemma}\label{lem:smoothapprox}
Assume {\bf (F1)}, {\bf (F2)}, {\bf (L0)} -- {\bf (L3)} and {\bf (S)}. Let $\chi$ be a critical solution. Then, for any $R \geq 0$ there exists a constant $\kappa_{R} \geq 0$ such that, for any $\eps > 0$
\begin{equation}\label{eq:smoothsub}
	\mane + H(x, D\chi_{\eps}(x)) \leq \kappa_{R}\eps \quad \forall\ x \in \overline{B}_{R},
\end{equation}
where $\chi_{\eps}(x)= \chi \star \xi_{\eps}(x)$ and $\xi_{\eps}$ is a smooth mollifier. 
\end{lemma}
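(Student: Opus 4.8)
The plan is to exploit the fact that a critical solution $\chi$ is locally semiconcave — hence locally Lipschitz — by assumption {\bf (S)} and \cite[Theorem 1]{bib:CR}, and to use this regularity to control the error introduced by mollification. First I would fix $R \geq 0$ and work on a slightly larger ball, say $\overline{B}_{R+1}$, so that for $\eps \in (0,1)$ and $x \in \overline{B}_R$ the mollified function $\chi_\eps(x) = \chi \star \xi_\eps(x)$ only averages values of $\chi$ over $\overline{B}_{R+1}$. On this ball $\chi$ is Lipschitz with some constant $\ell_R$, and — crucially — semiconcave with a linear modulus, i.e. $\chi$ can be written locally as a smooth function plus a concave function, or equivalently $D^+\chi(x)\neq\emptyset$ everywhere with the semiconcavity estimate controlling how $\chi$ lies below its "tangent" affine functions up to a quadratic error $C_R|x-y|^2$.

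The key computation is to compare $D\chi_\eps(x)$ with elements of $D^+\chi$. Since $\chi$ is a viscosity \emph{subsolution} of \eqref{eq:HJ}, for every $y$ and every $p \in D^+\chi(y)$ we have $\mane + H(y,p) \leq 0$. Now $D\chi_\eps(x) = \int_{B_\eps} D\chi(x-z)\,\xi_\eps(z)\,dz$ (using that $\chi$ is Lipschitz, hence differentiable a.e., and $D\chi(y) \in D^+\chi(y)$ wherever $\chi$ is differentiable). The standard semiconcavity fact is that $D\chi_\eps(x)$ is \emph{close} to $D^+\chi$ at $x$: more precisely, by semiconcavity each $D\chi(x-z) \in D^+\chi(x-z)$ and, using the quadratic estimate, $D\chi(x-z)$ differs from (a selection in) $D^+\chi(x)$ by $O(C_R \eps)$; averaging over $z \in B_\eps$ and using convexity of $D^+\chi(x)$ gives a point $p_x \in D^+\chi(x)$ with $|D\chi_\eps(x) - p_x| \leq C_R' \eps$. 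Then
\begin{align*}
\mane + H(x, D\chi_\eps(x)) &= \mane + H(x, p_x) + \big(H(x,D\chi_\eps(x)) - H(x,p_x)\big) \\
&\leq 0 + \big(H(x,D\chi_\eps(x)) - H(x,p_x)\big).
\end{align*}
To bound the last term I would use local Lipschitz continuity of $H(x,\cdot)$ on bounded sets: since the relevant gradients $D\chi_\eps(x)$ and $p_x$ lie in $\overline{B}_{\ell_R} \subset \R^d$ (again by the Lipschitz bound on $\chi$, which transfers to $\chi_\eps$), and $x$ ranges over the compact set $\overline{B}_R$, there is a constant $M_R$ with $|H(x,p)-H(x,q)| \leq M_R|p-q|$ for all $x\in\overline{B}_R$, $p,q\in\overline{B}_{\ell_R}$ — this follows from {\bf (L2)} exactly as in the estimate \eqref{eq:claim}--\eqref{eq:Hcondition} in \Cref{rem:loclipschitz}, controlling the maximizer $u_x$ in \eqref{eq:Hamiltonian} and hence the modulus of $H$ in $p$. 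Combining, $\mane + H(x,D\chi_\eps(x)) \leq M_R C_R'\eps =: \kappa_R \eps$, which is \eqref{eq:smoothsub}.

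The main obstacle, and the step requiring the most care, is the inequality $|D\chi_\eps(x) - p_x| \leq C_R'\eps$ for some $p_x \in D^+\chi(x)$ — that is, making precise that mollifying a semiconcave function produces gradients that stay within $O(\eps)$ of the (convex, compact) superdifferential at the base point. This is where semiconcavity is genuinely used rather than mere Lipschitz continuity: for a merely Lipschitz function the mollified gradient could lie far from $D^+\chi(x)$, and the bound would fail. I would either cite the standard reference on semiconcave functions (e.g. the estimate that for a semiconcave $g$ one has $Dg_\eps(x) \in D^+g(x) + B_{C\eps}$, as in Cannarsa--Sinestrari's monograph) or give the short direct argument: for $z \in B_\eps$, semiconcavity gives $\chi(x-z+h) - \chi(x-z) \leq \langle q, h\rangle + C_R|h|^2$ for the appropriate selection and also relates $q$ to a fixed $p_x \in D^+\chi(x)$ up to $C_R\eps$; then integrate against $\xi_\eps$. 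One should also briefly note that $\chi_\eps \in C^\infty$, so that $H(x,D\chi_\eps(x))$ makes classical sense and \eqref{eq:smoothsub} is a pointwise (not viscosity) inequality, as needed for the application in \Cref{thm:main1}.
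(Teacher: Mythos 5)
Your overall strategy — mollify, compare $D\chi_\eps$ with $D^+\chi$, and use the subsolution inequality plus Lipschitz continuity of $H$ — differs from the paper's, and the key step contains a genuine gap. You claim that semiconcavity yields, for each $z \in B_\eps$, a selection $p_z \in D^+\chi(x)$ with $|D\chi(x-z) - p_z| \leq C_R \eps$, and hence $D\chi_\eps(x) \in D^+\chi(x) + B_{C'_R\eps}$. This is false. Semiconcavity gives only upper semicontinuity of the superdifferential map, with no quantitative modulus. A concrete counterexample: take $\chi(x) = -|x|$ on $\R^2$ (concave, hence semiconcave with constant $0$) and $x$ at distance exactly $\eps$ from the singular point $0$. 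Then $D^+\chi(x)$ is the \emph{singleton} $\{-x/|x|\}$, yet for $y$ in the $\eps$-ball around $x$ the gradient $D\chi(y) = -y/|y|$ sweeps through directions differing from $-x/|x|$ by an amount of order $1$, not order $\eps$, because $y$ can come arbitrarily close to the singularity. Averaging over the mollifier then produces $D\chi_\eps(x)$ at a fixed positive distance from $D^+\chi(x)$, independently of $\eps$. So the estimate $|D\chi_\eps(x) - p_x| \leq C'_R\eps$ cannot hold uniformly on $\overline{B}_R$, and your final comparison $H(x, D\chi_\eps(x)) \leq H(x,p_x) + M_R C'_R \eps$ breaks down.

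The paper's proof avoids this entirely and, notably, does \emph{not} use semiconcavity at all — only local Lipschitz regularity (which already follows from {\bf (S)} and \cite{bib:CR}). Since $\chi \in W^{1,\infty}_{\text{loc}}$, it satisfies $\mane + H(x, D\chi(x)) = 0$ pointwise a.e. The mollified gradient $D\chi_\eps(x_0) = \int D\chi(x_0-y)\,\xi_\eps(y)\,dy$ is an average, and because $H$ is \emph{convex} in $p$ (being a supremum of affine functions), Jensen's inequality gives
\[
\mane + H(x_0, D\chi_\eps(x_0)) \leq \int_{\R^d}\big[\mane + H(x_0, D\chi(x_0-y))\big]\xi_\eps(y)\,dy.
\]
Splitting the integrand into $[\mane + H(x_0-y, D\chi(x_0-y))]$ (which vanishes a.e.) plus $[H(x_0, D\chi(x_0-y)) - H(x_0-y, D\chi(x_0-y))]$ (which is $O(\eps)$ by the $x$-modulus estimate \eqref{eq:Hcondition} and the local gradient bound) yields the result. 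The convexity-in-$p$ trick is exactly what replaces the false "$D\chi_\eps(x)$ is $O(\eps)$-close to $D^+\chi(x)$" step: one does not need the averaged gradient to be close to the superdifferential, only that $H$ of the average is bounded by the average of $H$. I recommend reworking your argument along these lines.
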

\proof
As observed in \Cref{rem:loclipschitz} by {\bf (S)} we have that $\chi$ belongs to $W^{1,\infty}_{\text{loc}}(\R^{d})$. So,
\begin{equation}\label{eq:almostev}
\mane + H(x, D\chi(x)) = 0, \quad \text{a.e.}\ x \in \R^{d}.	
\end{equation}

Let $R \geq 0$ and let $x_{0} \in \overline{B}_{R}$. Then, by Jensen's inequality  we get
\begin{align*}
\mane + H(x_{0}, D\chi_{\eps}(x_{0})) =&  \mane + H\left(x_{0}, \int_{\R^{d}}{D\chi(x_{0}-y) \xi_{\eps}(y)\ dy}\right) 
\\
\leq & \int_{\R^{d}}{\big[\mane + H(x_{0}, D\chi(x_{0}-y)) \big]\xi_{\eps}(y)\ dy}.  	
\end{align*}
Moreover, 
\begin{align*}
	& \int_{\R^{d}}{\big[\mane + H(x_{0}, D\chi(x_{0}-y)) \big]\xi_{\eps}(y)\ dy} 
	\\
	= & \underbrace{\int_{\R^{d}}{\big[\mane + H(x_{0}-y, D\chi(x_{0}-y)) \big]\xi_{\eps}(y)\ dy}}_{\bf I} 
	\\
	+ & \underbrace{\int_{\R^{d}}{\big[H(x_{0}, D\chi(x_{0}-y)) - H(x_{0}-y, D\chi(x_{0}-y))  \big]\xi_{\eps}(y)\ dy}}_{\bf II}.
\end{align*}
Since, ${\bf I}=0$  by \eqref{eq:almostev} and ${\bf II} \leq \kappa_{R} \eps$ for some constant $\kappa_{R} \geq 0$ on account of \eqref{eq:Hcondition}, we deduce \eqref{eq:smoothsub} because $x_{0}$ is an arbitrary point in $\overline{B}_{R}$. \qed

\medskip
\noindent {\it Proof of \Cref{thm:main1}.}
	We divide the proof into two steps. 
	
	{\bf Step 1.}
%We recall that for any $R \geq 0$ the space $\PP_{R}^{2}(\R^{d} \times \R^{m})$ is defined in \Cref{rem:p2r}.
% by
%\begin{equation*}
%	\PP_{R}^{2}(\R^{d} \times \R^{m}) = \left\{\mu \in \PP^{2}_{c}(\R^{d} \times \R^{m}) : \supp(\pi_{1} \sharp \mu)\subset \overline{B}_{R} \right\}.
%\end{equation*}
We first show that for any $R \geq Q_{0}$, where $Q_{0}$ is given in \eqref{eq:compactnesstrajectory},
\begin{equation*}
\mane = \min_{\mu \in \C_{F} \cap \PP_{R}^{2}(\R^{d} \times \R^{m})}	\int_{\R^{d} \times \R^{m}}{L(x,u)\ \mu(dx,du)}.
\end{equation*}
We observe that the above minimum exists since $\C_{F} \cap \PP_{R}^{2}(\R^{d} \times \R^{m})$ is compact w.r.t. $d_{1}$ distance. 
By \eqref{eq:maneconstant} we have that 
 \begin{align*}
 \mane = & \lim_{T \to +\infty} \frac{1}{T} v^{T}(0).  
 \end{align*}
 Hence, appealing to \Cref{lem:weakconv} and recalling that $L(x,u) \geq L(x^{*},0)$ we obtain
 \begin{align}\label{eq:star}
 \begin{split}
  \mane = & \lim_{T \to \infty}\int_{\R^{d} \times \R^{m}}{L(x,u)\ \mu^{T}_{0}(dx,du)} \geq  \int_{\R^{d} \times \R^{m}}{L(x,u)\ \mu^{\infty}_{0}(dx,du)}.
 \end{split} 
 \end{align}
Here, we recall that $\mu^{T}_{0}$ is defined by
\begin{equation*}
\int_{\R^{d} \times \R^{m}}{\varphi(x,u)\ \mu^{T}_{0}(dx,du)}= \frac{1}{T}\int_{0}^{T}{\varphi(\gamma_{0}(t), u_{0}(t))\ dt}, \quad \forall\ \varphi \in C_{b}(\R^{d} \times \R^{m})
\end{equation*}
with $(\gamma_{0}, u_{0}) \in \Gamma_{0,T}^{0 \to}$ optimal for \eqref{eq:minimization} and $\mu^{\infty}_{0}$ denotes the limit measure of $\{\mu^{T}_{0}\}_{T >0}$ constructed in \Cref{prop:nonempty}. So, since $\mu_{0}^{\infty} \in \C_{F} \cap \PP_{R}^{2}(\R^{d} \times \R^{m})$ for any $R \geq Q_{0}$, we obtain
\begin{equation*}
\mane \geq \min_{\mu \in \C_{F} \cap \PP_{R}^{2}(\R^{d} \times \R^{m})}\int_{\R^{d} \times \R^{m}}{L(x,u)\ \mu(dx,du)}.	
\end{equation*}
Next, by \Cref{prop:minimax} we have that for any $\psi \in C^{1}_{c}(\R^{d})$
\begin{align}\label{eq:ineqq}
\begin{split}
\min_{\mu \in \C_{F} \cap \PP_{R}^{2}(\R^{d} \times \R^{m})} \int_{\R^{d} \times \R^{m}}{L(x,u)\ \mu(dx,du)} = -\inf_{\psi \in C^{1}(\R^{d})} \sup_{x \in \overline{B}_{R}} H(x, D\psi(x)). 
\end{split}
\end{align}
Let $\chi$ be a critical solution. For $\eps \geq 0$ let $\chi_{\eps}(x)= \chi \star \xi^{\eps}(x)$, where $\xi^{\eps}$ is a smooth mollifier. Now, for any $R \geq 0$ \Cref{lem:smoothapprox} yields 
\begin{equation*}
\mane + H(x, D\chi_{\eps}(x)) \leq \kappa_{R}\eps, \quad x \in \overline{B}_{R}. 	
\end{equation*}
%Fix $R_{0} \geq 0$ such that $\supp(\pi_{1} \sharp \mu) \subset \overline{B}_{R_{0}}$ for any $\mu \in \PP_{c}^{2}(\R^{d} \times \R^{m})$. 
Then, using $\chi_{\eps}$ to give a lower bound for the right hand side of \eqref{eq:ineqq} we obtain
\begin{align*}
\min_{\mu \in \C_{F}\cap \PP_{R}^{2}(\R^{d} \times \R^{m})} \int_{\R^{d} \times \R^{m}}{L(x,u)\ \mu(dx,du)}
\geq\  - \sup_{x \in \overline{B}_{R}} H(x, D\chi_{\eps}(x)) \geq \mane - \kappa_{R}\eps.
\end{align*}
Hence, as $\eps \downarrow 0$ we get 
\begin{equation*}
\min_{\mu \in \C_{F}\cap \PP_{R}^{2}(\R^{d} \times \R^{m})} \int_{\R^{d} \times \R^{m}}{L(x,u)\ \mu(dx,du)} \geq \mane 	
\end{equation*}
and this completes the proof of step 1.

{\bf Step 2.} 
In order to prove \eqref{eq:mumane} we must remove the restriction $\mu \in \C_{F}\cap \PP_{R}^{2}(\R^{d} \times \R^{m})$ assumed to obtain the conclusion of step 1.
%Next, we prove that 
%\begin{equation*}
%\mane= \inf_{\mu \in \C_{F}} \int_{\R^{d} \times \R^{m}}{L(x,u)\ \mu(dx,du)},
%\end{equation*}
%that is, we remove the constraint $\mu \in \PP_{R}^{2}(\R^{d} \times \R^{m})$. 
Let $\{\mu_{j}\}_{j \in \N} \subset \C_{F}$ be such that
\begin{equation}\label{eq:help}
\lim_{j \to \infty} \int_{\R^{d} \times \R^{m}}{L(x,u)\ \mu_{j}(dx,du)} = \inf_{\mu \in \C_{F}} \int_{\R^{d} \times \R^{m}}{L(x,u)\ \mu(dx,du)}.	
\end{equation}
Since $\mu_{j} \in \C_{F} \subset \PP_{c}^{2}(\R^{d} \times \R^{m})$ we have that $\supp(\mu_{j}) \subset \overline{B}_{R_{j}}$ for some $\{R_{j}\}_{j \in \N}$.
%such that
%\begin{equation*}
%\supp(\mu_{j}) \subset \overline{B}_{R_{j}}.
%\end{equation*}
Thus, without loss of generality, we can assume that for any $j \in \N$
\begin{equation*}
\min_{\mu \in \C_{F} \cap \PP_{R_{j}}^{2}(\R^{d} \times \R^{m})} \int_{\R^{d} \times \R^{m}}{L(x,u)\ \mu(dx,du)} = \int_{\R^{d} \times \R^{m}}{L(x,u)\ \mu_{j}(dx,du)}. 	
\end{equation*}
So, for $j$ sufficiently large, in view of Step 1 we get
\begin{equation*}
\mane = \int_{\R^{d} \times \R^{m}}{L(x,u)\ \mu_{j}(dx,du)}	
\end{equation*}
and the conclusion follows from \eqref{eq:help}. \qed

\begin{corollary}\label{cor:carmin}
Assume {\bf (F1)}, {\bf (F2)}, {\bf (L0)} -- {\bf (L3)} and {\bf (S)}. Then the following holds.
\begin{itemize}
\item[($i$)] $\mane = L(x^{*}, 0) =  \displaystyle{\min_{x \in \K_{L}}} L(x,0)$. 
\item[($ii$)] For any $x_{0} \in \R^{d}$ we have that
\begin{equation*}
\mane = \int_{\R^{d} \times \R^{m}}{L(x,u)\ \mu^{\infty}_{x_{0}}(dx,du)}	
\end{equation*}
where $\mu^{\infty}_{x_{0}}$ is given in \Cref{prop:nonempty}.
\end{itemize}
\end{corollary}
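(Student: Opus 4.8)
The plan is to derive both assertions directly from the variational representation $\mane=\min_{\mu\in\C_{F}}\int_{\R^{d}\times\R^{m}}L(x,u)\,\mu(dx,du)$ of \Cref{thm:main1}, using the elementary two-sided bound \eqref{eq:L0} and the measures constructed in \Cref{prop:nonempty}.

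For ($i$), the central remark is that the Dirac mass $\delta_{(x^{*},0)}$ supported at the single point $(x^{*},0)\in\R^{d}\times\R^{m}$ is an $F$-closed measure: it obviously belongs to $\PP_{c}^{2}(\R^{d}\times\R^{m})$, and for every $\varphi\in C^{1}(\R^{d})$ its defining integral collapses to $\langle F^{\star}(x^{*})D\varphi(x^{*}),0\rangle=0$ because the velocity marginal is concentrated at the origin. Feeding $\delta_{(x^{*},0)}$ into the formula of \Cref{thm:main1} yields $\mane\le L(x^{*},0)$. Conversely, the lower bound in \eqref{eq:L0} gives $L(x,u)\ge L(x^{*},0)$ for all $(x,u)$, whence $\int_{\R^{d}\times\R^{m}}L\,d\mu\ge L(x^{*},0)$ for every probability measure $\mu$, and in particular $\mane\ge L(x^{*},0)$. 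This proves $\mane=L(x^{*},0)$, and the identity $L(x^{*},0)=\min_{x\in\K_{L}}L(x,0)$ is precisely the choice of $x^{*}$ fixed after \eqref{eq:gap}.

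For ($ii$), fix $x_{0}\in\R^{d}$ and let $(\gamma_{x_{0}},u_{x_{0}})\in\Gamma_{0,T}^{x_{0}\to}$ be the optimal pair used in \eqref{eq:uniformmeasure} to build $\mu^{T}_{x_{0}}$; then by optimality $\int_{\R^{d}\times\R^{m}}L\,d\mu^{T}_{x_{0}}=\frac{1}{T}V_{T}(x_{0})$, which tends to $\mane$ by \eqref{eq:maneconstant}, hence also along the subsequence $T_{n}\to\infty$ selected in \Cref{prop:nonempty}. Since all the measures $\mu^{T}_{x_{0}}$ have $x$-marginals supported in the fixed ball $\overline{B}_{Q_{|x_{0}|}}$ by \eqref{eq:compactnesstrajectory}, on that region $L$ satisfies the growth hypothesis of \Cref{lem:weakconv} with the localized constant $C_{1}(1+Q_{|x_{0}|})$ (after a harmless $x$-cut-off, exactly as in the proof of \Cref{thm:main1}); applying that lemma to $\mu^{T_{n}}_{x_{0}}\rightharpoonup^{*}\mu^{\infty}_{x_{0}}$ gives $\mane=\lim_{n\to\infty}\int_{\R^{d}\times\R^{m}}L\,d\mu^{T_{n}}_{x_{0}}\ge\int_{\R^{d}\times\R^{m}}L\,d\mu^{\infty}_{x_{0}}$. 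On the other hand $\mu^{\infty}_{x_{0}}\in\C_{F}$ by \Cref{prop:nonempty}, so \Cref{thm:main1} yields the reverse inequality $\int_{\R^{d}\times\R^{m}}L\,d\mu^{\infty}_{x_{0}}\ge\mane$. Combining the two gives the claimed identity.

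I do not expect a serious obstacle here: both parts are short consequences of \Cref{thm:main1}. The only point needing a line of care is the applicability of \Cref{lem:weakconv} to $L$, whose upper bound in \eqref{eq:L0} is not uniform in the state variable; this is circumvented because the measures in play all have uniformly compactly supported state marginals, so one may either truncate $L$ outside the relevant ball or simply repeat the one-line argument of \Cref{lem:weakconv} with the localized growth constant.
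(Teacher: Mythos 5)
Your proof is correct and follows essentially the same path as the paper: for ($i$), test the variational formula of \Cref{thm:main1} with $\delta_{(x^{*},0)}$ and use the lower bound in \eqref{eq:L0}; for ($ii$), combine the convergence \eqref{eq:maneconstant}, \Cref{lem:weakconv}, and the $F$-closedness of $\mu^{\infty}_{x_0}$ from \Cref{prop:nonempty} with \Cref{thm:main1}. Your added observation that $L$ does not literally satisfy the $x$-uniform growth bound hypothesized in \Cref{lem:weakconv}, and that this is harmless because the $x$-marginals are uniformly compactly supported, is a genuine (if minor) technical point that the paper passes over in silence.
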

\begin{remarks}\em
Note that point ($i$) of the conclusion has beed already proved in \cite[Corollary 5.4]{bib:CM02}. Here we propose a different approach which relies on \eqref{eq:mumane}. 
\end{remarks}

\proof
	{\it ($i$)} On the one hand, by \Cref{thm:main1},  we have that
	\begin{equation*}
	\mane = \min_{\mu \in \C_{F}} \int_{\R^{d} \times \R^{m}}{L(x,u)\ \mu(dx,du)} \geq L(x^{*},0)
	\end{equation*}
where the inequality holds true by \eqref{eq:L0}. 

On the other hand, observing that the Dirac measure $\delta_{(x^{*},0)}$ is $F$-closed, we obtain
\begin{equation*}
\mane= \min_{\mu \in \C_{F}} \int_{\R^{d} \times \R^{m}}{L(x,u)\ \mu(dx,du)} \leq \int_{\R^{d} \times \R^{m}}{L(x,u)\ \delta_{(x^{*}, 0)}(dx,du)} = L(x^{*}, 0).	
\end{equation*}

\noindent{\it ($ii$)} Recalling \Cref{lem:weakconv} we obtain
 \begin{align*}
 \begin{split}
  \mane = & \lim_{T \to \infty}\int_{\R^{d} \times \R^{m}}{L(x,u)\ \mu^{T}_{0}(dx,du)} \geq \int_{\R^{d} \times \R^{m}}{L(x,u)\ \mu^{\infty}_{0}(dx,du)}.
 \end{split} 
 \end{align*}
 Thus, the conclusion follows from \Cref{thm:main1} since $\mu^{\infty}_{0}$ is  $F$-closed by \Cref{prop:nonempty}.  \qed

\section{Aubry set}
\label{sec:aubrymather}

\noindent{\it Throughout this section we assume that {\bf (F1)}, {\bf (F2)}, {\bf (L1)} -- {\bf (L3)}, and {\bf (S)} are in force.}

	We denote by $L^{*}$ the Legendre transform of $L$, that is,
	\begin{equation*}
	L^{*}(x,p)= \sup_{u \in \R^{m}} \big\{\langle p,v \rangle - L(x,u) \big\},
	\end{equation*}
and we observe that
\begin{equation}\label{eq:starhj}
 H(x, p) = L^{*}(x, F^{*}(x)p), \quad (x, p) \in \R^{d} \times \R^{d}.
\end{equation}
Since $L$ satisfies {\bf (L0)} -- {\bf (L2)}, we have that $L^{*}$ is coercive and strictly convex in $p$.

		\begin{definition}[{\bf Dominated functions and calibrated curves}]\label{def:domcal}
			Let $c \in \R$ and let $\varphi$ be a continuous function on $\R^{d}$.			
			\begin{enumerate}
			\item We say that $\varphi$ is dominated by $L-c$, and we denote this by $\varphi \prec L -c$, if for any $a$, $b \in \R$, with $a < b$, and any trajectory-control pair $(\gamma,u) \in \Gamma_{a,b}^{\gamma(a) \to \gamma(b)}$ we have that
			\begin{equation*}
			\varphi(\gamma(b)) - \varphi(\gamma(a)) \leq \int_{a}^{b}{L(\gamma(s), u(s))\ ds} - c\ (b-a).
			\end{equation*}
			\item Fix $a$, $b \in \R$ with $a < b$ and let $(\gamma, u) \in \Gamma_{a,b}^{\gamma(a) \to \gamma(b)}$. We say that $\gamma$ is calibrated for $\varphi$ if
			\begin{equation*}
			\varphi(\gamma(b)) - \varphi(\gamma(a)) = \int_{a}^{b}{L(\gamma(s), u(s))\ ds} - c\ (b-a).
			\end{equation*}
			We denote by $\Cal(\varphi)$ the set of all calibrated curves for $\varphi$.
			\end{enumerate}
		\end{definition}

		%By adapting standard arguments to the control system \eqref{eq:dynamics}, see for instance \cite{bib:FM}, one can prove the following result. 
		%\begin{proposition}\label{prop:subsolution}
		%	$\chi$ is a critical subsolution if and only if $\chi$ is dominated by $L-\mane$.
		%\end{proposition}

		 %We can define now the (projected) Mather set and the Aubry set.

		%\begin{definition}[{\bf Mather measures and Mather set}]
		%We say that $\mu \in \C_{\ff}$ is a Mather measure if  
		%\begin{align*}
		%\mane = \inf_{\nu \in \C_{\ff}} \int_{\R^{d} \times \R^{m}}{L(x,u)\ \nu(dx,du)}=\int_{\R^{d} \times \R^{m}}{L(x,u)\ \mu(dx,du)}.	
		%\end{align*}
		%We denote by $\C^{*}_{\ff}$ the set of all Mather measures. The Mather set associated with the control system is 
		%\begin{align*}
		%\widetilde{\M}=\overline{\bigcup_{\mu \in \C^{*}_{\ff}} \supp(\mu)} \subset \R^{d} \times \R^{m}.	
		%\end{align*}
		%Let $\pi_{1}: \R^{d} \times \R^{m} \to \R^{d}$ be the projection function onto the first factor, that is, $\pi_{1}(x,u)=x$. Then we call $\M = \pi_{1}\big(\widetilde{\M} \big)$ the projected Mather set. 
		%\end{definition}

		For any $t \geq 0$ and for any $x$, $y \in \R^{d}$ we recall that $A_{t}(x,y)$ stands for the action functional, also called fundamental solution of the critical equation, i.e.,
		\begin{equation}\label{eq:actionfunctional}
		A_{t}(x,y)=\inf_{(\gamma, u) \in \Gamma_{0,t}^{x \to y}} \int_{0}^{t}{L(\gamma(s), u(s))\ ds} .	
		\end{equation}
 We note that $\varphi \prec L-\mane$ if and only if for any $x$, $y$ in $\R^{d}$ and for any $t \geq 0$ we have that 
\begin{equation}\label{eq:subsol}
	\varphi(y)-\varphi(x) \leq A_{t}(x,y) - \mane t.
\end{equation}
Peierls barrier is defined as
\begin{align}\label{eq:PB}
\PB(x,y) =  \liminf_{t \to \infty}\ \big[A_{t}(x,y) - \mane t\big] \quad (x,y \in \R^{d}).	
\end{align}

\begin{lemma}\label{lemma}
The following properties hold.
\begin{itemize}
\item[($i$)] For any $x$, $y \in \R^{d}$ we have that $0 \leq \PB(x,y) < \infty$.
%\item[($ii$)] We have that $A_{t}(x,y)=A_{t}(y,x)$ for any $t \geq 0$ and any $x$, $y \in \R^{d}$ and $\PB(x,y)=\PB(y,x)$ for any $x$, $y \in \R^{d}$.
%\begin{equation}\label{eq:symmaction}
%A_{t}(x,y)=A_{t}(y,x), \quad \forall\ t \geq 0, \,\, \forall\ x,y \in \R^{d}.
%\end{equation}
%and
%\begin{equation}\label{eq:symmPB}
%\PB(x,y)=\PB(y,x), \quad \forall\ x,y \in \R^{d}.
%\end{equation}
\item[($ii$)] For any $x$, $y$, $z \in \R^{d}$ there holds
\begin{equation}\label{eq:peierls}
\PB(x,z) \leq \PB(x,y) + \PB(y,z)
\end{equation}
and, 
\begin{equation}\label{eq:trian}
\PB(x,z) \leq \PB(x,y) + A_{t}(y,z) - \mane t \quad (t \geq 0).
\end{equation}
\end{itemize}
\end{lemma}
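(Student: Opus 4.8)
The plan is to establish $(i)$ first — both the lower bound $\PB(x,y)\ge 0$ and the finiteness $\PB(x,y)<\infty$ — and then to derive the two triangle-type inequalities in $(ii)$ from the dynamic programming principle for the action $A_t$. For the lower bound in $(i)$, I would use that the constant function $\varphi\equiv 0$ (or any critical subsolution) is dominated by $L-\mane$: indeed by Corollary~\ref{cor:carmin}$(i)$ we have $\mane = L(x^{*},0)\le L(x,u)$ for all $(x,u)$, so for any $(\gamma,u)\in\Gamma_{0,t}^{x\to y}$,
\begin{equation*}
\int_0^t L(\gamma(s),u(s))\,ds - \mane t \ge 0,
\end{equation*}
and taking the infimum over such pairs gives $A_t(x,y)-\mane t\ge 0$ for every $t\ge 0$, hence $\PB(x,y)=\liminf_{t\to\infty}[A_t(x,y)-\mane t]\ge 0$. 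For the finiteness, I would construct an explicit competitor curve: use small-time local controllability (the Lie algebra rank condition, encoded here in the characterization \eqref{eq:subriem} of $d_{\SR}$) to steer from $x$ to $x^{*}$ in sub-Riemannian time $\delta(x)=d_{\SR}(x,x^{*})$, then sit at $x^{*}$ with zero control for a long time (cost exactly $\mane$ per unit time by Corollary~\ref{cor:carmin}$(i)$), then steer from $x^{*}$ to $y$ in time $\delta(y)$. Concatenating, for $t\ge \delta(x)+\delta(y)$ one gets
\begin{equation*}
A_t(x,y) - \mane t \le C(x) + C(y),
\end{equation*}
where $C(x)$ bounds the cost-minus-$\mane\cdot(\text{time})$ of the steering leg near $x$ (finite by the upper bound in \eqref{eq:L0} and the $L^2$-bound on a bounded-speed geodesic control); passing to the $\liminf$ gives $\PB(x,y)<\infty$.

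For $(ii)$, the key tool is the dynamic programming (semigroup) property of the action functional:
\begin{equation*}
A_{t+s}(x,z) \le A_t(x,y) + A_s(y,z), \qquad x,y,z\in\R^d,\ t,s\ge 0,
\end{equation*}
which follows by concatenating an almost-optimal pair from $x$ to $y$ on $[0,t]$ with an almost-optimal pair from $y$ to $z$ on $[0,s]$ (reparametrized). To obtain \eqref{eq:trian}, fix $t\ge 0$, subtract $\mane(s+t)$ from both sides of $A_{s+t}(x,z)\le A_s(x,y)+A_t(y,z)$, rewrite as
\begin{equation*}
A_{s+t}(x,z) - \mane(s+t) \le \big[A_s(x,y)-\mane s\big] + \big[A_t(y,z)-\mane t\big],
\end{equation*}
and take $\liminf_{s\to\infty}$: the left side tends to (a quantity $\ge$) $\PB(x,z)$ — more precisely $\liminf_{s\to\infty}[A_{s+t}(x,z)-\mane(s+t)]=\PB(x,z)$ since shifting the time index by a fixed $t$ does not change the $\liminf$ — while $\liminf_{s\to\infty}[A_s(x,y)-\mane s]=\PB(x,y)$ and the last bracket is the fixed number $A_t(y,z)-\mane t$; this yields \eqref{eq:trian}. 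For \eqref{eq:peierls}, I would instead choose a sequence $s_n\to\infty$ realizing the $\liminf$ for $\PB(y,z)$ and combine with a sequence realizing $\PB(x,y)$: write $A_{s_n+t_n}(x,z)-\mane(s_n+t_n)\le [A_{t_n}(x,y)-\mane t_n]+[A_{s_n}(y,z)-\mane s_n]$ with $t_n,s_n\to\infty$ chosen along the two defining subsequences, and pass to the limit, using again that the left-hand $\liminf$ over the combined times is bounded below by $\PB(x,z)$.

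The routine parts — the DPP concatenation and the $\liminf$ manipulations — are standard; the one place demanding genuine care is the finiteness claim in $(i)$, specifically controlling the cost of the steering legs near $x$ and $y$. The subtlety is that the sub-Riemannian geodesic joining $x$ to $x^{*}$ has controls of size $\le 1$ but is parametrized on $[0,\delta(x)]$, so its $L^2$-norm is at most $\sqrt{\delta(x)}$; feeding this and the trajectory bound (Lemma~\ref{lem:boundedtrajectories}) into the upper estimate \eqref{eq:L0} for $L$ gives a finite bound on $\int_0^{\delta(x)}L(\gamma,u)\,ds$, and subtracting $\mane\,\delta(x)$ keeps it finite — so I expect this to go through cleanly, with the main obstacle being merely the bookkeeping of how the concatenated competitor is reparametrized onto $[0,t]$ so that its three legs have the prescribed durations and the waiting leg absorbs the remaining time $t-\delta(x)-\delta(y)$.
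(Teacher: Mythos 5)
Your proposal is correct and follows essentially the same route as the paper: the lower bound via \eqref{eq:L0} and Corollary~\ref{cor:carmin}, finiteness via the same geodesic--wait--geodesic competitor, and \eqref{eq:trian} via the dynamic programming inequality followed by a $\liminf$ in the long time variable. The only cosmetic differences are that the paper simply cites \cite[Proposition 9.1.2]{bib:FA} for \eqref{eq:peierls} (your direct two-subsequence argument is a valid substitute), and that for \eqref{eq:trian} the paper runs an $\eps$-argument along a near-optimal sequence rather than your cleaner observation that shifting the time index by a constant leaves the $\liminf$ unchanged.
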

\proof
In order to prove the lower bound in ($i$), it suffices to observe that by \eqref{eq:L0} and \Cref{cor:carmin} we have that
\begin{equation*}
A_{t}(x,y)-\mane t \geq \int_{0}^{t}{L(x^{*},0)\ ds} - \mane t = 0.
\end{equation*}
Next, in order to show that $\PB(x,y) < \infty$ given $x$, $y \in \R^{d}$, let $(\gamma_{x}, u_{x}) \in \Gamma_{0,\delta^{*}(x)}^{x \to x^{*}}$, $(\gamma_{y}, u_{y}) \in \Gamma_{0,\delta^{*}(y)}^{x^{*} \to y}$ be geodesic pairs. 
%optimal for \eqref{eq:subriem} associated with $x$ and $y$, respectively. 
Fix $t > \delta^{*}(x) + \delta^{*}(y)$ and define the control
\begin{equation*}
\widetilde{u}(s)=
\begin{cases}
u_{x}(s), & s \in [0,\delta^{*}(x)]
\\
0,  & s \in (\delta^{*}(x), t-\delta^{*}(y)]
\\
u_{y}(s-t+\delta^{*}(y)),  & s \in (t-\delta^{*}(y), t].
\end{cases}
\end{equation*}
Let  $\widetilde\gamma$ be such that $(\widetilde\gamma, \widetilde{u}) \in \Gamma_{0,t}^{x \to y}$. Then, we have that 
\begin{multline*}
A_{t}(x,y) - \mane t \leq \int_{0}^{t} L(\widetilde\gamma(s), \widetilde{u}(s))\ ds - \mane t
\\
\leq \int_{0}^{\delta^{*}(x)} L(\gamma_{x}(s), u_{x}(s))\ ds + \int_{\delta^{*}(x)}^{t-\delta^{*}(y)} L(x^{*}, 0)\ ds + \int_{0}^{\delta^{*}(y)} L(\gamma_{y}(s), u_{y}(s))\ ds - \mane t 
\end{multline*}
By definition of geodesic pairs we have that $|u_{x}(s)| \leq 1$ for any $s \in [0,\delta^{*}(x)]$ and $|u_{y}(s)| \leq 1$ for any $s \in [0,\delta^{*}(y)]$. Hence, from \eqref{lem:boundedtrajectories} we obtain
\begin{equation*}
|\gamma_{x}(s)| \leq (|x| + c_{f}\delta^{*}(x))e^{c_{f}\delta^{*}(x)}, \quad s \in [0,\delta^{*}(x)]
\end{equation*}
and
\begin{equation*}
|\gamma_{y}(s)| \leq (|y| + c_{f}\delta^{*}(y))e^{c_{f}\delta^{*}(y)}, \quad s \in [0,\delta^{*}(y)].
\end{equation*}
So, by \eqref{eq:L0} we obtain 
\begin{multline*}
\int_{0}^{\delta^{*}(x)} L(\gamma_{x}(s), u_{x}(s))\ ds + \int_{0}^{\delta^{*}(y)} L(\gamma_{y}(s), u_{y}(s))\ ds 
\\ \leq (|x| + c_{f}\delta^{*}(x))e^{c_{f}\delta^{*}(x)} + (|y| + c_{f}\delta^{*}(y))e^{c_{f}\delta^{*}(y)}.
\end{multline*}
Moreover, by \Cref{cor:carmin} we have that 
\begin{equation*}
\int_{\delta^{*}(x)}^{t-\delta^{*}(y)} L(x^{*}, 0)\ ds - \mane t = - \mane (\delta^{*}(x) + \delta^{*}(y)).
\end{equation*}
Therefore, 
\begin{multline*}
A_{t}(x,y) - \mane t \leq (|x| + c_{f}\delta^{*}(x))e^{c_{f}\delta^{*}(x)} 
\\
+ (|y| + c_{f}\delta^{*}(y))e^{c_{f}\delta^{*}(y)} + \mane (\delta^{*}(x) + \delta^{*}(y)).
\end{multline*}
This yields $\PB(x,y) < \infty$, thus completing the proof of ($i$).

As for ($ii$), we note that the proof of \eqref{eq:peierls} can be found in \cite[Proposition 9.1.2]{bib:FA}. So, let us show \eqref{eq:trian}. Let $x$, $y \in \R^{d}$, let $\eps > 0$ and let $\{s_{n}\}_{n \in \N}$ be such that 
\begin{equation*}
A_{s_{n}}(x,y) -\mane s_{n} \leq \PB(x,y) + \eps, \quad \forall\ n \in \N.
\end{equation*}
Let $t > 0$, $z \in \R^{d}$ and set $t_{n}=s_{n} + t$. Then, we have that 
\begin{align*}
A_{t_{n}}(x,z) -\mane t_{n} \leq\ & A_{s_{n}}(x,y) - \mane s_{n} + A_{t}(y,z) - \mane t
\\
\leq\ & \PB(x,y) + \eps + A_{t}(y,z) - \mane t.
\end{align*}
Hence, we obtain
\begin{equation*}
\PB(x,z) \leq \liminf_{n \to \infty} \{A_{t_{n}}(x,z) - \mane t_{n} \} \leq \PB(x,y) + \eps + A_{t}(y,z) - \mane t
\end{equation*}
which in turn yields \eqref{eq:trian} since $\eps$ is arbitrary. 
\qed

\begin{remarks}\label{rem:peierlsubsolution}\em
Note that, combining \eqref{eq:trian} and \eqref{eq:subsol} we have that Peierls barrier is a dominated by $L-\mane$. Moreover, it is well-known that this implies that $\PB(x,\cdot)$ is a subsolution to the critical equation 
\begin{equation*}
\mane + H(x, D \chi(x))=0 \quad (x \in \R^{d}). 
\end{equation*}
\end{remarks}

%In the following we show that the projected Mather set is an attractor for any minimizer of \eqref{eq:minimization} and consequently for any calibrated curve. 
%\begin{proposition}\label{prop:attractor}
%Assume {\bf (F1)}, {\bf (F2)}, {\bf (L0)} -- {\bf (L3)} and {\bf (S)}. For any $x_{0} \in \R^{d}$ let $(\gamma_{x_{0}}, u_{x_{0}})$ be optimal for \eqref{eq:minimization}. Then, we have that
%\begin{equation*}
%\liminf_{t \to \infty} d(\gamma_{x_{0}}(t), \M) = 0.	
%\end{equation*}
%where 
%\begin{equation*}
%	d(\gamma_{x_{0}}(t), \M) = \min\big\{|\gamma_{x_{0}}(t)-y|: y \in \M\big\}.
%\end{equation*}
%\end{proposition}
%\proof
%Given the optimal pair $(\gamma_{x_{0}}, u_{x_{0}})$ we define the sequence $\{\mu^{T}_{x_{0}}\}_{T \geq 0}$ as:\begin{equation*}
%\int_{\R^{d} \times \R^{m}}{\varphi(x,u)\ \mu^{T}_{x_{0}}(dx,du)} = \frac{1}{T}\int_{0}^{T}{\varphi(\gamma_{x_{0}}(s), u_{x_{0}}(s))\ ds}, \quad \forall\ \varphi \in C_{b}(\R^{d}).	
%\end{equation*}
%We recall that \Cref{prop:nonempty} there exists $\{T_{n}\}_{n \in \N}$ and an $F$-closed measure $\mu^{\infty}_{x_{0}}$ such that $\mu^{T_{n}}_{x_{0}} \rightharpoonup^{*} \mu^{\infty}_{x_{0}}$. Therefore, the result follows by applying \eqref{eq:kuratoski}.
%\qed

\subsection{Properties of Peierls barrier}

In this section we derive several properties of  Peierls barrier. For example, we prove that $\PB(x,\cdot)$ is a critical solution (see \Cref{def:Corrector} below) and we give a construction of calibrated curves for such a function. 

Set
	\begin{equation*}
	\mathcal{S}=\{\varphi \in C(\R^{d}): \varphi \geq 0,\, \varphi \prec L - \mane\}.
	\end{equation*}
	
	\begin{definition}\label{def:Corrector}
	A viscosity solution $\chi \in \mathcal{S}$ of the equation 
	\begin{equation}\label{eq:HJcorrector}
	\mane + H(x, D\chi(x))=0, \quad x \in \R^{d}
	\end{equation}
	is called critical solution if
	\begin{equation}\label{eq:corrector}
	\chi(x)=\inf_{(\gamma, u) \in \Gamma_{0, t}^{\to x}} \left\{\chi(\gamma(0)) + \int_{0}^{t}{L(\gamma(s), u(s))\ ds} \right\} - \mane t, \quad \forall\ t > 0,\, \forall\ x \in \R^{d}.
	\end{equation}
	\end{definition}
	
	Note that \eqref{eq:corrector} can be equivalently written as
	\begin{equation}\label{eq:corrector2}
	\chi(x)=\min_{y \in \R^{d}} \{\chi(y) + A_{t}(y,x)\} - \mane t, \quad \forall t > 0, \, \forall x \in \R^{d}.
	\end{equation}
	
\begin{remarks}\label{rem:Infimum}\em
The existence of critical solutions in $\mathcal{S}$ follows from \cite[Theorem 7.6]{bib:CM02}.  Moreover, by \cite[Theorem 7.3]{bib:CM02}, the infimum in \eqref{eq:corrector} is attained for any $(t, x) \in [0,\infty) \times \R^{d}$. 
\end{remarks}

\begin{lemma}\label{lem:semiconcavity}
Let $\chi$ be a critical solution for \eqref{eq:HJcorrector}. Then, $\chi$ is locally semiconcave. 
\end{lemma}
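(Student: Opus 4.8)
The plan is to exploit the representation formula \eqref{eq:corrector2} together with the semiconcavity of the fundamental solution $A_t(y,\cdot)$ guaranteed by \cite{bib:CR} under {\bf (S)}. First I would fix a compact set $\Omega \subset \R^d$ and a time $t > 0$, say $t = 1$ for definiteness. By \eqref{eq:corrector2} and \Cref{rem:Infimum}, for every $x \in \Omega$ there exists $y_x \in \R^d$ such that
\[
\chi(x) = \chi(y_x) + A_1(y_x,x) - \mane.
\]
The first step is to show that the minimizers $y_x$ stay in a fixed compact set as $x$ ranges over $\Omega$: this follows from the a priori bounds \eqref{eq:l2bounds}--\eqref{eq:compactnesstrajectory} on optimal trajectory-control pairs, which force the endpoint $\gamma(0) = y_x$ to lie in some $\overline{B}_{Q'}$ depending only on $\Omega$, together with the continuity of $\chi$ which makes $\chi(y_x)$ bounded.

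Next, I would invoke \cite[Theorem 1]{bib:CR}: under {\bf (S)}, the map $z \mapsto A_1(y,z)$ is locally semiconcave, and a careful reading of that result (or a compactness argument over the relevant compact set of base points $y$) yields a single semiconcavity constant $C = C(\Omega)$ valid for all $y$ in the compact set found in step one and all $z$ in a neighborhood of $\Omega$. Then the standard argument applies: for $x_0, x_1 \in \Omega$ and $\mu \in [0,1]$, writing $x_\mu = \mu x_0 + (1-\mu)x_1$, one uses $y_{x_\mu}$ as a (non-optimal but admissible) competitor for both $\chi(x_0)$ and $\chi(x_1)$, namely $\chi(x_i) \le \chi(y_{x_\mu}) + A_1(y_{x_\mu}, x_i) - \mane$, while $\chi(x_\mu) = \chi(y_{x_\mu}) + A_1(y_{x_\mu}, x_\mu) - \mane$. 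Taking the convex combination and using the semiconcavity of $A_1(y_{x_\mu},\cdot)$,
\[
\mu\chi(x_0) + (1-\mu)\chi(x_1) - \chi(x_\mu) \le \mu A_1(y_{x_\mu},x_0) + (1-\mu)A_1(y_{x_\mu},x_1) - A_1(y_{x_\mu},x_\mu) \le \mu(1-\mu) C |x_0 - x_1|^2,
\]
which is exactly local semiconcavity of $\chi$ with constant $C_\Omega = C$.

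The main obstacle I anticipate is the uniformity of the semiconcavity constant for $A_1(y,\cdot)$ as $y$ varies over a compact set. The reference \cite{bib:CR} states semiconcavity of $y \mapsto A_t(x,y)$ for each fixed $x$ and $t$; one must check that the constant can be taken locally uniform in the base point $x$ (here playing the role of $y$), which is plausible from the proof in \cite{bib:CR} but requires either citing the precise quantitative version or running a compactness/contradiction argument. A secondary technical point is ensuring the competitor endpoints $y_{x_\mu}$ are confined to a compact set independent of $\mu$ — but this is immediate from the uniform trajectory bounds \eqref{eq:compactnesstrajectory} once $x_\mu$ ranges over the compact set $\Omega$. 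Everything else is the routine convexity-of-the-value-function computation displayed above.
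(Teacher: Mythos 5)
Your proposal is correct and follows essentially the same route as the paper: fix a minimizer $y_{t,x}$ in the representation formula \eqref{eq:corrector2}, use it as a competitor for the neighboring points, and transfer the semiconcavity of $A_t(y_{t,x},\cdot)$ from \cite[Theorem 1]{bib:CR} to $\chi$. The paper phrases the estimate via the midpoint inequality $\chi(x+h)+\chi(x-h)-2\chi(x)\le A_t(y_{t,x},x+h)+A_t(y_{t,x},x-h)-2A_t(y_{t,x},x)$ rather than your convex-combination form, and is more laconic about the uniformity of the semiconcavity constant in the base point $y$, a point you are right to flag but which follows from the quantitative bounds in \cite{bib:CR} over the compact set containing the minimizers.
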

\proof

%We recall, owing to \eqref{eq:corrector2}, $\chi$ can be represented as
%\begin{equation*}
%\chi(x)=\inf_{y \in \R^{d}}\{\chi(y) + A_{t}(y,x)\} - \mane t, \quad (t, x) \in (0,\infty) \times \R^{d}. 
%\end{equation*}
Let $y_{t, x} \in \R^{d}$ be the minimum point associated with $(t, x)$ in \eqref{eq:corrector2}. 
Then, for any $t > 0$, $x$, $h \in \R^{d}$ we have that 
\begin{multline*}
\chi(x+h) + \chi(x-h)- 2\chi(x) 
\\
\leq \chi(y_{t, x}) + \chi(y_{t, x}) - 2\chi(y_{t, x}) + A_{t}(y_{t, x}, x+h) + A_{t}(y_{t, x}, x-h) - 2A_{t}(y_{t, x}, x)
\\
= A_{t}(y_{t, x}, x+h) + A_{t}(y_{t, x}, x-h) - 2A_{t}(y_{t, x}, x).
\end{multline*}
The conclusion follows by the semiconcavity of the action function $A_{t}(y_{t, x}, \cdot)$ proved in \cite[Theorem 1]{bib:CR}.
\qed

The following technical lemma is useful for the analysis of the Aubry set.

\begin{lemma}\label{lem:barrierconvergence}
% be such that
%	\begin{equation*}
%	\PB(x,y) \in \R.	
%	\end{equation*}
Let $(x,y) \in \R^{2d}$. Let $\{ t_{n}\}_{n \in \N} \in \R$ and $(\gamma_{n}, u_{n}) \in \Gamma_{0,t_{n}}^{x \to y}$ be such that
\begin{equation}\label{eq:zeros}
t_{n} \to +\infty \quad \text{and} \quad \lim_{n \to +\infty} \int_{0}^{t_{n}}{L(\gamma_{u_{n}}(s), u_{n}(s))\ ds}-\mane t_{n} = \PB(x,y). 	
\end{equation}
Then, there exist a subsequence, still denoted by $(\gamma_{n}, u_{n})$, and a pair $(\bar\gamma, \bar{u}) \in \Gamma_{0,\infty}^{x \to}$ such that
\begin{itemize}
\item[($i$)]  $\{u_{n}\}_{n \in \N}$ weakly converges to $\bar{u}$ in $L^{2}$ on any compact subset of $[0,\infty)$;
\item[($ii$)] $\{\gamma_{n}\}_{n \in \N}$ uniformly converges to $\bar\gamma$ on every compact subset of $[0,\infty)$.
\end{itemize} 
\end{lemma}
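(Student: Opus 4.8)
The plan is to extract a limiting trajectory-control pair by a standard compactness argument, using the a priori bounds already established in the paper. The key point is that the pairs $(\gamma_n, u_n)$ are action-minimizing sequences realizing the Peierls barrier, so their energies $\int_0^{t_n}|u_n|^2\,ds$ grow at most linearly in $t_n$; this gives local $L^2$-bounds on the controls and equibounded, equicontinuous trajectories on every compact time interval.

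First I would fix $T>0$ and restrict attention to $[0,T]$. For $n$ large enough that $t_n > T$, I want a uniform bound on $\int_0^{T}|u_n(s)|^2\,ds$. From \eqref{eq:zeros}, the quantity $\int_0^{t_n}L(\gamma_n(s),u_n(s))\,ds - \mane t_n$ converges to the finite number $\PB(x,y)$, hence is bounded above by some constant $M$ independent of $n$. Using the lower bound in \eqref{eq:L0}, namely $L(x,u)\geq \frac{1}{2\ell_1}|u|^2 + L(x^*,0)$, together with $\mane = L(x^*,0)$ from \Cref{cor:carmin}(i), we get
\begin{equation*}
\frac{1}{2\ell_1}\int_0^{t_n}|u_n(s)|^2\,ds \leq \int_0^{t_n}L(\gamma_n(s),u_n(s))\,ds - \mane t_n \leq M,
\end{equation*}
so $\int_0^{t_n}|u_n|^2\,ds \leq 2\ell_1 M$ uniformly in $n$. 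In particular $\{u_n|_{[0,T]}\}$ is bounded in $L^2(0,T;\R^m)$, so up to a subsequence it converges weakly in $L^2(0,T;\R^m)$ to some $\bar u^{(T)}$. Next, on $[0,T]$ the trajectories satisfy $\dot\gamma_n = F(\gamma_n)u_n$; since $|u_n|_{L^2}$ is bounded and $F$ has the sublinear growth from {\bf (F1)}, a Gronwall argument (as in \Cref{lem:boundedtrajectories}, adapted to $L^2$ controls via Cauchy--Schwarz) bounds $\sup_{[0,T]}|\gamma_n|$ uniformly, and then $|\dot\gamma_n(s)| \leq c_f(1+\sup|\gamma_n|)|u_n(s)|$ shows $\{\gamma_n\}$ is uniformly bounded in $W^{1,2}(0,T;\R^d)$, hence equicontinuous (with a $\sqrt{t-s}$ modulus) by Cauchy--Schwarz. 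By Arzel\`a--Ascoli, a subsequence converges uniformly on $[0,T]$ to some $\bar\gamma^{(T)}$, and passing to the limit in the integral form $\gamma_n(t) = x + \int_0^t F(\gamma_n(s))u_n(s)\,ds$ — using uniform convergence of $\gamma_n$, continuity of $F$, and weak $L^2$ convergence of $u_n$ — identifies $(\bar\gamma^{(T)},\bar u^{(T)})$ as a solution pair in $\Gamma_{0,T}^{x\to}$.

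Then I would run a diagonal argument over $T = 1, 2, 3, \dots$: at each stage extract a further subsequence so that the limits on $[0,T]$ are consistent, and the diagonal subsequence yields a single pair $(\bar\gamma,\bar u)\in\Gamma_{0,\infty}^{x\to}$ with $u_n \rightharpoonup \bar u$ in $L^2$ on every compact subset and $\gamma_n \to \bar\gamma$ uniformly on every compact subset, which is exactly ($i$) and ($ii$). The only mildly delicate point — the ``main obstacle'' such as it is — is making sure the passage to the limit in the dynamics is legitimate: one must combine the \emph{strong} (uniform) convergence $F(\gamma_n)\to F(\bar\gamma)$ with the \emph{weak} convergence of $u_n$, which works because uniform convergence of $F(\gamma_n)$ in $L^\infty$ against the weakly convergent $u_n$ in $L^2$ passes to the product; this is standard but should be stated carefully. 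Everything else is routine bookkeeping with the a priori estimates already in hand.
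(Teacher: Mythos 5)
Your proposal is correct and follows essentially the same strategy as the paper: the energy bound from \eqref{eq:L0} together with $\mane=L(x^*,0)$ gives local $L^2$-weak compactness of $u_n$, Arzel\`a--Ascoli handles $\gamma_n$, and the dynamics pass to the limit by pairing strong convergence of $F(\gamma_n)$ with weak convergence of $u_n$. The only small variation is in bounding $\sup|\gamma_n|$: you derive it from Gronwall with $L^2$ controls via Cauchy--Schwarz, whereas the paper invokes \eqref{eq:compactnesstrajectory} directly (which is stated for minimizers of the unconstrained problem \eqref{eq:minimization}); your route is self-contained and avoids that slight mismatch.
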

\proof
For simplicity of notation we set $h:=\PB(x,y)$. From \eqref{eq:PB} it follows that there exists $\bar{n} \in \N$ such that for any $n \geq \overline{n}$ we have that
	\begin{equation*}
		\int_{0}^{t_{n}}{L(\gamma_{n}(s), u_{n}(s))\ ds}-\mane t_{n} \leq h+1.
	\end{equation*}
 On the other hand, by {\bf (L2)} we obtain 
\begin{align*}
\int_{0}^{t_{n}}{L(\gamma_{n}(s), u_{n}(s))\ ds}-\mane t_{n} \geq \frac{1}{2\ell_{1}} \int_{0}^{t_{n}}{|u_{n}(s)|^{2}\ ds} 
\end{align*}
in view of ($i$) in  \Cref{cor:carmin}. So,
\begin{equation*}
\int_{0}^{t_{n}}{|u_{n}(s)|^{2}\ ds} \leq 2\ell_{1}(h+1), \quad \forall n \geq \overline{n}.	
\end{equation*}
Therefore, there exists a subsequence, still denoted by $\{u_{n}\}$, that weakly converges to an admissible control $\bar{u}$ in $L^{2}$ on any compact subset of $[0,+\infty)$. Moreover,
%let $R \geq 0$ be such that $|x| \leq R$. Then, 
by \eqref{eq:compactnesstrajectory} for any $t > 0 $ we have that
\begin{equation*}
|\gamma_{n}(s)|^{2} \leq Q_{|x|}, \quad \forall\ s \in [0,t],\,\, \forall\ n \geq \overline{n}
\end{equation*}
and, for any $s \in [0,t]$ and any $n \geq \overline{n}$
\begin{align*}
\int_{0}^{t_{n}}|\dot\gamma_{n}(s)|^{2}\ ds \leq \int_{0}^{t_{n}}c_{f}^{2}\big(1 + |\gamma_{n}(s)| \big)^{2}|u_{n}(s)|^{2}\ ds \leq 4c_{f}^{2}(1+Q_{|x|})\ell_{1}(h+1).	
\end{align*}
Hence, $\{\gamma_{n}\}_{n \in \N}$ is uniformly bounded in $W^{1,2}(0,t; \R^{d})$ for any $t > 0$. Then, by the Ascoli-Arzela Theorem, up to extracting a further subsequence,  $\{\gamma_{n}\}_{n \in \N}$ uniformly converges to a curve $\bar{\gamma}$ on every compact subset of $[0,+\infty)$. 

Now, we claim that $(\bar\gamma, \bar{u})$ satisfies \eqref{eq:dynamics}. Indeed, for any $t \geq 0$ we have that 
\begin{equation*}
\gamma_{n}(t)= x + \sum_{i = 1}^{m}{\int_{0}^{t}{u_{i}^{n}(s)f_{i}(\gamma_{n}(s))\ ds}}. 
\end{equation*}
Thus, by the locally uniform convergence of $\gamma_{n}$ it follows that  $f_{i}(\gamma_{n}(t)) \to f_{i}(\bar\gamma(t))$, locally uniformly, for any $t \geq 0$, as $n \to +\infty$ for any $i = 1, \dots, m$. Therefore, taking $v \in \R^{d}$ we deduce that
\begin{equation*}
\langle v, \gamma_{n}(t) \rangle = \langle v, x \rangle + \sum_{i = 1}^{m}{\int_{0}^{t}{ u_{i}^{n}(s)\langle f_{i}(\gamma_{n}(s)), v \rangle \ ds}}, \quad \forall\ t \geq 0.	
\end{equation*}
As $n \to +\infty$ we get
\begin{equation*}
\langle v, \bar\gamma(t) \rangle = \langle v, x \rangle + \sum_{i = 1}^{m}{\int_{0}^{t}{ \bar{u}_{i}(s) \langle f_{i}(\bar\gamma(s)), v \rangle \ ds}}, \quad \forall\ t \geq 0.
\end{equation*}
Since  $v \in \R^{d}$ is arbitrary, the conclusion follows.
\qed

\begin{remarks}\label{rem:conv}\em
Arguing as in the proof of \Cref{lem:barrierconvergence}, one can prove the following. Given $h \in \R$, $\{ t_{n}\}_{n \in \N}$ and $(\gamma_{n}, u_{n}) \in \Gamma_{-t_{n}, 0}^{x \to y}$ such that
\begin{equation*}
t_{n} \to +\infty \quad \text{and} \quad \lim_{n \to +\infty} \int_{-t_{n}}^{0}{L(\gamma_{u_{n}}(s), u_{n}(s))\ ds}-\mane t_{n} = h, 
\end{equation*}
there exists a subsequence, still denoted by $(\gamma_{n}, u_{n})$, and a trajectory-control pair $(\bar\gamma, \bar{u})$ such that
\begin{itemize}
\item[($i$)]  $\{u_{n}\}_{n \in \N}$ weakly converges to $\bar{u}$ in $L^{2}$ on any compact subset of $(-\infty, 0]$;
\item[($ii$)] $\{\gamma_{n}\}_{n \in \N}$ uniformly converges to $\bar\gamma$ on every compact subset of $(-\infty, 0]$.
\end{itemize} 
\end{remarks}

Next we show that there exists a calibrated curve for Peierls barrier $\PB(x, \cdot)$ at any $x \in \R^{d}$.

\begin{proposition}\label{prop:viscositybarrier}
Let $x \in \R^{d}$. Then, for any $y \in \R^{d}$ there exists $(\bar\gamma, \bar{u}) \in \Gamma_{-\infty,0}^{\to y}$ calibrated for $\PB(x, \cdot)$ in $[-t, 0]$ for any $t > 0$, i.e., $(\bar\gamma, \bar{u})$ satisfy
\begin{equation}\label{eq:calibrata}
	\PB(x,y)-\PB(x,\bar\gamma(-t)) = \int_{-t}^{0}{L(\bar\gamma(s),\bar{u}(s))\ ds} - \mane t, \quad \forall\ t \geq 0.
\end{equation}
%Moreover, for each $x \in \R^{d}$ the map $y \mapsto \PB(x,y)$ is a critical solution on $\R^{d}$.
\end{proposition}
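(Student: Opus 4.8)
The plan is to construct the calibrated curve by a diagonal argument over longer and longer minimizing curves for the Peierls barrier, then shift time so that the terminal point stays at $y$. First I would fix $x$, $y\in\R^d$ and choose, for each $n\in\N$, a time $s_n\to+\infty$ and a pair $(\sigma_n,w_n)\in\Gamma_{0,s_n}^{x\to y}$ realizing the $\liminf$ in \eqref{eq:PB} up to error $1/n$, i.e.\ $\int_0^{s_n}L(\sigma_n(t),w_n(t))\,dt-\mane s_n\to \PB(x,y)$. Reparametrize by setting $\gamma_n(t)=\sigma_n(t+s_n)$, $u_n(t)=w_n(t+s_n)$ for $t\in[-s_n,0]$, so that $(\gamma_n,u_n)\in\Gamma_{-s_n,0}^{x\to y}$ with $\gamma_n(0)=y$ for all $n$. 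Applying \Cref{rem:conv} (the backward analogue of \Cref{lem:barrierconvergence}, valid because the energy bound $\int|u_n|^2\le 2\ell_1(h+1)$ and the uniform state bound \eqref{eq:compactnesstrajectory} hold exactly as there), extract a subsequence so that $u_n\rightharpoonup\bar u$ weakly in $L^2$ on compact subsets of $(-\infty,0]$ and $\gamma_n\to\bar\gamma$ locally uniformly, with $(\bar\gamma,\bar u)$ solving \eqref{eq:dynamics} and $\bar\gamma(0)=y$; this gives the pair $(\bar\gamma,\bar u)\in\Gamma_{-\infty,0}^{\to y}$.

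Next I would verify the calibration identity \eqref{eq:calibrata}. Fix $t>0$. For $n$ large enough that $s_n>t$, split the action of $(\gamma_n,u_n)$ over $[-s_n,0]$ into the piece on $[-s_n,-t]$ and the piece on $[-t,0]$. The $[-s_n,-t]$ piece is an admissible connection from $x$ to $\gamma_n(-t)$ in time $s_n-t$, hence is $\ge A_{s_n-t}(x,\gamma_n(-t))-\mane(s_n-t)+\mane(s_n-t)$, i.e.\ using the domination \eqref{eq:subsol} of $\PB(x,\cdot)$ together with \eqref{eq:trian}, one gets
\begin{equation*}
\int_{-s_n}^{-t}L(\gamma_n,u_n)\,ds-\mane(s_n-t)\ \ge\ \PB\big(x,\gamma_n(-t)\big)-\big[A_{s_n-t}(x,\gamma_n(-t))-\mane(s_n-t)-\PB(x,\gamma_n(-t))\big]?
\end{equation*}
More cleanly: since $A_{s_n-t}(x,\gamma_n(-t))\le\int_{-s_n}^{-t}L(\gamma_n,u_n)\,ds$, the subsolution property $\PB(x,z)-\PB(x,w)\le A_\tau(w,z)-\mane\tau$ gives $\PB(x,\gamma_n(-t))-\PB(x,x)\le \int_{-s_n}^{-t}L-\mane(s_n-t)$, and since $\PB(x,x)\ge 0$ we obtain a lower bound on the tail action. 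Combining with the total $\int_{-s_n}^0 L-\mane s_n\to\PB(x,y)$ and lower semicontinuity of the action along weakly convergent controls (Tonelli-type, using convexity in $u$ from {\bf (L2)} and the locally uniform convergence $\gamma_n\to\bar\gamma$) yields
\begin{equation*}
\PB(x,y)\ \ge\ \limsup_n\Big[\PB\big(x,\gamma_n(-t)\big)+\int_{-t}^0 L(\gamma_n,u_n)\,ds-\mane t\Big]\ \ge\ \PB\big(x,\bar\gamma(-t)\big)+\int_{-t}^0 L(\bar\gamma,\bar u)\,ds-\mane t,
\end{equation*}
where continuity of $\PB(x,\cdot)$ (from \Cref{lemma}, it is finite, nonnegative and dominated, hence locally Lipschitz by the domination estimates) handles the convergence $\gamma_n(-t)\to\bar\gamma(-t)$. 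The reverse inequality $\PB(x,y)\le\PB(x,\bar\gamma(-t))+\int_{-t}^0 L(\bar\gamma,\bar u)\,ds-\mane t$ is just \eqref{eq:trian} applied with the connection $\bar\gamma|_{[-t,0]}$, so equality holds.

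The main obstacle I anticipate is the lower semicontinuity step: one must pass to the limit in $\int_{-t}^0 L(\gamma_n,u_n)\,ds$ from below, which requires the standard Tonelli argument (Mazur's lemma to upgrade weak convergence of $u_n$ to strong convergence of convex combinations, plus the convexity of $u\mapsto L(x,u)$ from $D^2_uL\ge 1/\ell_1$ and the locally uniform convergence of $\gamma_n$ controlling the $x$-dependence). A secondary technical point is ensuring the tail estimate on $[-s_n,-t]$ is genuinely tight in the limit — i.e.\ that no action is "lost to infinity" — which is exactly what forces us to use the $\liminf$-realizing sequence together with the two-sided squeeze $0\le\PB(x,\bar\gamma(-t))$ and the exact limit $\PB(x,y)$; this is the same mechanism as in the proof of \Cref{lemma}(ii) and \Cref{lem:barrierconvergence}. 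Once these are in place, \eqref{eq:calibrata} follows for every $t\ge0$, and letting $t$ range over $(0,\infty)$ shows $(\bar\gamma,\bar u)$ is calibrated for $\PB(x,\cdot)$ on every $[-t,0]$, completing the proof.
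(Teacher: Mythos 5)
Your construction of the limit pair $(\bar\gamma,\bar u)$ via a $\liminf$-realizing sequence, reparametrization, and \Cref{rem:conv} is exactly what the paper does, and the lower-semicontinuity step and the reverse inequality via \eqref{eq:trian} are also fine. The gap is in how you bound the tail action $\int_{-s_n}^{-t}L(\gamma_n,u_n)\,ds-\mane(s_n-t)$. Applying \eqref{eq:trian} with $w=x$, $z=\gamma_n(-t)$ gives only
\begin{equation*}
\PB\big(x,\gamma_n(-t)\big)-\PB(x,x)\ \le\ A_{s_n-t}\big(x,\gamma_n(-t)\big)-\mane(s_n-t)\ \le\ \int_{-s_n}^{-t}L(\gamma_n,u_n)\,ds-\mane(s_n-t),
\end{equation*}
and the fact that $\PB(x,x)\ge 0$ makes this bound \emph{weaker}, not stronger: it does not remove the $-\PB(x,x)$. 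You also cannot replace it by $A_{s_n-t}(x,\gamma_n(-t))-\mane(s_n-t)\ge\PB(x,\gamma_n(-t))$, because $\PB$ is a $\liminf$ over times and a single finite time $s_n-t$ gives no such lower bound. Tracing the term through your chain, what you actually obtain is
\begin{equation*}
\PB(x,y)\ \le\ \PB\big(x,\bar\gamma(-t)\big)+\int_{-t}^{0}L(\bar\gamma,\bar u)\,ds-\mane t\ \le\ \PB(x,y)+\PB(x,x),
\end{equation*}
which yields the desired equality only when $\PB(x,x)=0$, i.e.\ when $x\in\A$, whereas the proposition must hold for arbitrary $x$.

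The paper avoids this by not invoking the domination of $\PB(x,\cdot)$ at the moving point $\gamma_n(-t)$ at all. Instead it performs a curve surgery: fix $t$, take $n$ large so that $d_n:=d_{\SR}(\bar\gamma(-t),\gamma_n(-t))\le 1$, and append to the truncated curve $\gamma_n|_{[-t_n,-t]}$ a geodesic pair from $\gamma_n(-t)$ to $\bar\gamma(-t)$. This produces, for each $n$, a competitor in $\Gamma_{0,t_n-t+d_n}^{x\to\bar\gamma(-t)}$ with times $t_n-t+d_n\to\infty$ and with the extra geodesic cost of order $O(d_n)\to 0$. Since this is a legitimate sequence in the $\liminf$ that defines $\PB(x,\bar\gamma(-t))$, one gets directly
\begin{equation*}
\PB\big(x,\bar\gamma(-t)\big)\ \le\ \liminf_{n\to\infty}\Big[\int_{-t_n}^{-t}L(\gamma_n,u_n)\,ds-\mane(t_n-t)\Big],
\end{equation*}
with no spurious $\PB(x,x)$, and the rest of your argument (lower semicontinuity on $[-t,0]$, reverse inequality from \eqref{eq:trian}) closes the calibration identity. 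So the fix is to replace your domination estimate on the tail by this concatenation-with-geodesic construction; everything else in your write-up can stay.
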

\proof

Fix $x$, $y \in \R^{d}$  and let $\{ t_{n}\}_{n \in \N}$, $(\gamma_{n}, u_{n}) \in \Gamma_{-t_{n}, 0}^{x \to y}$ be such that
\begin{equation*}
t_{n} \to \infty, \quad \text{and} \quad \lim_{n \to \infty} \int_{-t_{n}}^{0}{L(\gamma_{n}(s), u_{n}(s))\ ds} - \mane t_{n} = \PB(x,y).	
\end{equation*}
Then, by \Cref{rem:conv} there exists $(\bar\gamma, \bar{u})$ such that $u_{n}$ weakly converges to $\bar{u}$ and $\gamma_{n}$ uniformly converges to $\bar\gamma$, on every compact subset of $(-\infty,0]$.

 Fix $t \in [0,\infty)$, take $n \in \N$ such that $d_{n} = d_{\text{SR}}(\bar\gamma(-t), \gamma_{n}(-t)) \leq 1$ and  $t_{n} > t + 1$. Let $(\gamma_{0}, u_{0}) \in \Gamma_{-t, -t+d_{n}}^{\gamma_{n}(-t) \to \bar\gamma(-t)}$ be a geodesic pair and let $\widetilde{u}_{n} \in L^{2}(-t_{n}, -t + d_{n})$ be given by
\begin{align*}
\widetilde{u}_{n}(s) = 
\begin{cases}
	u_{n}(s), & \quad s \in [-t_{n}, -t]
	\\
	u_{0}(s), & \quad s \in (-t, -t +d_{n}].
\end{cases}	
\end{align*}
We denote by $\widetilde\gamma_{n}$ the associated trajectory, that is, $(\widetilde\gamma_{n}, \widetilde{u}_{n}) \in \Gamma_{-t_{n}, -t+d_{n}}^{x \to \bar\gamma(-t)}$. Then, defining the control $\widehat{u}_{n}(s)=\widetilde{u}_{n}(s-t_{n})$, so that $(\widehat\gamma_{n}, \widehat{u}_{n}) \in \Gamma_{0, t_{n}-t+d_{n}}^{x \to \bar\gamma(-t)}$, by {\bf (L2)} we get 
\begin{align*}
& A_{t_{n}-t + d_{n}}(x, \bar\gamma(-t)) - \mane(t_{n}-t+d_{n})
\\
\leq\ & \int_{0}^{t_{n} - t + d_{n}}{L(\widehat\gamma_{n}(s), \widehat{u}_{n}(s))\ ds} - \mane (t_{n}-t+d_{n})
\\
=\ & \int_{-t_{n}}^{-t+d_{n}}{L(\widetilde\gamma_{n}(s), \widetilde{u}_{n}(s))\ ds} - \mane (t_{n}-t+d_{n})
\\
=\ & \int_{-t_{n}}^{-t}{L(\gamma_{n}(s), u_{n}(s))\ ds} + \int_{-t}^{-t+d_{n}}{L(\gamma_{0}(s), u_{0}(s))\ ds} - \mane (t_{n}-t+d_{n}).
%\\
%\leq\ &  \int_{-t_{n}}^{-t}{L(\gamma_{n}(s), u_{n}(s))\ ds} + (\ell_{2}+\ell_{1}-\mane)d_{n} - \mane(t_{n}-t).
\end{align*}
%{\color{red} Aggiungi passaggio intermedio nel cambio di tempi.}
Let us estimate 
\begin{equation*}
\int_{-t}^{-t+d_{n}}{L(\gamma_{0}(s), u_{0}(s))\ ds}.
\end{equation*}
Since $\| u_{0}\|_{\infty, [-t, -t+d_{n}]} \leq 1$, by \Cref{lem:boundedtrajectories} and \eqref{eq:compactnesstrajectory} we deduce that 
\begin{equation*}
|\gamma_{0}(s)| \leq (|\gamma_{n}(-t)| + c_{f}d_{n})e^{c_{f}d_{n}} \leq \Lambda(Q_{|x|}), \quad \forall\ s \in [-t, -t + d_{n}]
\end{equation*}
with $\Lambda(R) = (R+ c_{f})e^{c_{f}}$ for all $R\ge 0$. Thus, by {\bf (L2)} we obtain 
\begin{equation*}
\int_{-t}^{-t+d_{n}}{L(\gamma_{0}(s), u_{0}(s))\ ds} \leq 2d_{n}C_{1}(1 + \Lambda(Q_{|x|}))
\end{equation*}
and so
\begin{align*}
& A_{t_{n}-t + d_{n}}(x, \bar\gamma(-t)) - \mane(t_{n}-t+d_{n})
\\
\leq\ & \int_{-t_{n}}^{-t}{L(\gamma_{n}(s), u_{n}(s))\ ds} + (2C_{1}(1 + \Lambda(Q_{|x|}))-\mane)d_{n} - \mane(t_{n}-t).
\end{align*}
Hence, by the definition of Peierls barrier we have that 
\begin{equation*}
\PB(x, \bar\gamma(-t)) \leq \liminf_{n \to +\infty} \Big\{A_{t_{n}-t+d_{n}}(x, \bar\gamma(-t)) - \mane (t_{n}-t-d_{n})\Big\}.
\end{equation*}
Thanks to the lower-semicontinuity of the action, we obtain
\begin{equation*}
\int_{-t}^{0}{L(\bar\gamma(s), \bar{u}(s))\ ds} - \mane t \leq \liminf_{n \to +\infty}\left\{\int_{-t}^{0}{L(\gamma_{n}(s), u_{n}(s))\ ds} - \mane t \right\}.	
\end{equation*}
Therefore, combining the above estimates we get 
\begin{align*}
& \PB(x, \bar\gamma(-t)) + \int_{-t}^{0}{L(\bar\gamma(s), \bar{u}(s))\ ds} - \mane t 
\\
\leq\ & \liminf_{n \to +\infty} \Big\{A_{t_{n}-t+d_{n}}(x, \bar\gamma(-t)) - \mane (t_{n}-t-d_{n})\Big\} + \int_{-t}^{0}{L(\bar\gamma(s), \bar{u}(s))\ ds} - \mane t
\\
\leq\ & \liminf_{n \to +\infty}\left\{(2C_{1}(1 + \Lambda(Q_{|x|})) -\mane)d_{n} + \int_{-t_{n}}^{-t}{L(\gamma_{n}(s), u_{n}(s))\ ds} - \mane(t_{n}-t) \right\} 
\\
+\ & \liminf_{n \to +\infty}\left\{\int_{-t}^{0}{L(\gamma_{n}(s), u_{n}(s))\ ds} - \mane t \right\}.	
\end{align*}
By reordering the terms inside brackets we conclude that
\begin{align*}
& \PB(x, \bar\gamma(-t)) + \int_{-t}^{0}{L(\bar\gamma(s), \bar{u}(s))\ ds} - \mane t 
\\
\leq \ & \liminf_{n \to +\infty}\left\{(2C_{1}(1 + \Lambda(Q_{|x|})) - \mane)d_{n} + \int_{-t_{n}}^{0}{L(\gamma_{n}(s), u_{n}(s))\ ds} -\mane t_{n} \right\} = \PB(x,y).
\end{align*}
Therefore,
\begin{equation}\label{eq:call}
\PB(x, y) - \PB(x,\bar\gamma(-t)) \geq \int_{-t}^{0}{L(\bar\gamma(s), \bar{u}(s))\ ds} - \mane t	.
\end{equation}
Next, we claim that
\begin{equation}\label{eq:call1}
 \PB(x,y) - \PB(x, \bar\gamma(-t)) \leq \int_{-t}^{0}{L(\bar\gamma(s), \bar{u}(s))\ ds} - \mane t.
\end{equation}
Indeed, by \eqref{eq:trian} we have that 
\begin{equation*}
 \PB(x,y) - \PB(x, \bar\gamma(-t)) \leq A_{t}(\bar\gamma(-t), y) - \mane t.
\end{equation*}
Hence, defining the control 
\begin{equation*}
\widehat{u}(s)=\bar{u}(s-t), \quad s \geq 0
\end{equation*}
and denoting by $\widehat\gamma$ the associated trajectory, we deduce that 
\begin{align*}
\PB(x,y) - \PB(x, \bar\gamma(-t)) \leq\ & A_{t}(\bar\gamma(-t), y) - \mane t
 \\
\leq\ & \int_{0}^{t}{L(\widehat\gamma(s), \widehat{u}(s))\ ds} - \mane t =\ \int_{-t}^{0}{L(\bar\gamma(s), \bar{u}(s))\ ds} - \mane t. 
\end{align*}
By combining \eqref{eq:call} and \eqref{eq:call1} we obtain \eqref{eq:calibrata}. \qed

%\begin{remarks}\label{rem:peierlscalibrated}\em
% %Moreover, this implies that the function $y \mapsto \PB(x,y)$ is a critical solution. 
%\end{remarks}

\begin{corollary}\label{cor:viscoHJ}
 For each $x \in \R^{d}$ we have that the map $y \mapsto \PB(x,y)$ is a critical solution of  \eqref{eq:HJcorrector} on $\R^{d}$. 
\end{corollary}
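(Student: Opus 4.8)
The plan is to verify the three defining properties of a critical solution listed in \Cref{def:Corrector}: that $y\mapsto\PB(x,y)$ belongs to $\mathcal{S}$, that it satisfies the Lax--Oleinik identity \eqref{eq:corrector} (equivalently \eqref{eq:corrector2}), and that it is a viscosity solution of \eqref{eq:HJcorrector}. Membership in $\mathcal{S}$ is essentially free: nonnegativity and finiteness are \Cref{lemma}($i$), and the domination $\PB(x,\cdot)\prec L-\mane$ is recorded in \Cref{rem:peierlsubsolution} (it follows from \eqref{eq:trian} via the characterization \eqref{eq:subsol}), which in turn already gives that $\PB(x,\cdot)$ is a viscosity subsolution of \eqref{eq:HJcorrector}. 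For the continuity of $\PB(x,\cdot)$ I would plug a geodesic pair between two nearby points $y,y'$ into the domination inequality with $t=d_{\SR}(y,y')$: such a pair has $|u|\le1$ and, by \Cref{lem:boundedtrajectories}, keeps its trajectory in a fixed compact set on which $L$ is bounded, so that $|\PB(x,y)-\PB(x,y')|\le C\,d_{\SR}(y,y')$ locally, whence $\PB(x,\cdot)$ is continuous.

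Next I would establish the representation formula in the form \eqref{eq:corrector2}, namely $\PB(x,z)=\min_{y\in\R^d}\{\PB(x,y)+A_t(y,z)\}-\mane t$. The inequality ``$\le$'' is obtained by taking the infimum over $y$ in \eqref{eq:trian}. For ``$\ge$'', fix $z\in\R^d$ and $t>0$ and apply \Cref{prop:viscositybarrier} with terminal point $z$: this yields a pair $(\bar\gamma,\bar u)\in\Gamma_{-\infty,0}^{\to z}$ calibrated for $\PB(x,\cdot)$, so that, using the definition \eqref{eq:actionfunctional} of $A_t$,
\begin{align*}
\PB(x,z) &= \PB(x,\bar\gamma(-t))+\int_{-t}^{0}L(\bar\gamma(s),\bar u(s))\,ds-\mane t \\
&\ge \PB(x,\bar\gamma(-t))+A_t(\bar\gamma(-t),z)-\mane t \ \ge\ \inf_{y\in\R^d}\{\PB(x,y)+A_t(y,z)\}-\mane t.
\end{align*}
The two bounds coincide, and the infimum is attained at $y=\bar\gamma(-t)$; this is exactly \eqref{eq:corrector2}, hence \eqref{eq:corrector}.

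It remains to upgrade $\PB(x,\cdot)$ from subsolution to solution of \eqref{eq:HJcorrector}, i.e.\ to prove the supersolution inequality. Let $\phi\in C^1(\R^d)$ satisfy $\phi\le\PB(x,\cdot)$ near $y_0$ with $\phi(y_0)=\PB(x,y_0)$, and let $(\bar\gamma,\bar u)$ be a pair calibrated for $\PB(x,\cdot)$ with $\bar\gamma(0)=y_0$, furnished by \Cref{prop:viscositybarrier}. From $\phi\le\PB(x,\cdot)$, $\phi(y_0)=\PB(x,y_0)$ and the calibration identity we get, for all small $t>0$,
\begin{equation*}
\phi(y_0)-\phi(\bar\gamma(-t))\ \ge\ \PB(x,y_0)-\PB(x,\bar\gamma(-t))=\int_{-t}^{0}L(\bar\gamma(s),\bar u(s))\,ds-\mane t.
\end{equation*}
Writing the left-hand side as $\int_{-t}^{0}\langle F^{\star}(\bar\gamma(s))D\phi(\bar\gamma(s)),\bar u(s)\rangle\,ds$ (chain rule together with $\dot{\bar\gamma}=F(\bar\gamma)\bar u$) and using $\langle F^{\star}(\bar\gamma(s))D\phi(\bar\gamma(s)),\bar u(s)\rangle-L(\bar\gamma(s),\bar u(s))\le H(\bar\gamma(s),D\phi(\bar\gamma(s)))$, which is immediate from the definition \eqref{eq:Hamiltonian} of $H$, one obtains $\frac1t\int_{-t}^{0}H(\bar\gamma(s),D\phi(\bar\gamma(s)))\,ds\ge-\mane$; letting $t\downarrow0$ and invoking the continuity of $\bar\gamma$ and of $H(\cdot,D\phi(\cdot))$ gives $\mane+H(y_0,D\phi(y_0))\ge0$. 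Together with the subsolution property and the first two paragraphs, this shows that $\PB(x,\cdot)$ is a critical solution. I expect the bulk of the work to be already carried by \Cref{prop:viscositybarrier} (existence of calibrated curves) and \Cref{lemma}; the only mildly delicate points are the continuity claim and the passage to the limit $t\downarrow0$ in the last step, both of which are routine.
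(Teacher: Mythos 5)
Your proposal is correct and follows essentially the same route as the paper: domination (hence the subsolution property) from \Cref{rem:peierlsubsolution}, the ``$\le$'' half of the Lax--Oleinik identity from \cref{eq:trian}, and the ``$\ge$'' half plus the supersolution inequality both extracted from the backward calibrated curve of \Cref{prop:viscositybarrier}. Where the paper is terse --- it simply asserts that the existence of a minimizing calibrated curve makes $\PB(x,\cdot)$ a supersolution, and does not comment on continuity --- you supply the standard test-function computation and a local Lipschitz bound in $d_{\SR}$ via geodesic pairs, so your write-up is a more explicit version of the same argument rather than a different one.
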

\proof
Note that $\overline\gamma$, constructed in \Cref{prop:viscositybarrier}, turns out to be calibrated for $\PB(x, \cdot)$ on any interval $[-t,0]$ with $t > 0$ and, thus, by \cref{eq:trian} we deduce that such a curve is also a minimizer for the action $A_{t}(x, \overline\gamma(-t))$. From \Cref{rem:peierlsubsolution} we have that the map $y \mapsto \PB(x,y)$ is a subsolution to the critical equation for any $x \in \R^{d}$, whereas in view of the fact that $\overline\gamma$ is a minimizing curve we also have that such a map is a supersolution for the critical equation. 

In order to show that $\PB(x, \cdot)$ satisfies \eqref{eq:corrector2}, let $z$ be in $\R^{d}$. Then, \eqref{eq:trian} yields
	\begin{equation*}
	\PB(x,z) \leq \min_{y \in \R^{d}} \{\PB(x,y) + A_{t}(y,z)- \mane t\}.
	\end{equation*}
	On the other hand, by \Cref{prop:viscositybarrier} we have that for any $x$, $z \in \R^{d}$ there exists $(\overline\gamma, \overline{u}) \in \Gamma_{-t,0}^{\to z}$ such that 
	\begin{equation*}
	\PB(x,z)-\PB(x,\bar\gamma(-t)) = \int_{-t}^{0}{L(\bar\gamma(s),\bar{u}(s))\ ds} - \mane t.
	\end{equation*}
	Consequently, there exists $y_{x,z} \in \R^{d}$ such that 
	\begin{equation*}
	\PB(x,z) = \PB(x,y_{x,z}) + A_{t}(y_{x,z},z) - \mane t. 
	\end{equation*}
	This shows that $\PB(x, \cdot)$ satisfies \eqref{eq:corrector2} for any $x \in \R^{d}$.  \qed

%\begin{corollary}
%	The map $y \mapsto \PB(x, y)$ is a critical solution of  \eqref{eq:HJcorrector} for any $x \in \R^{d}$.
%	\end{corollary}
%	\proof
%	The fact that for any $x \in \R^{d}$ is a viscosity solution of \eqref{eq:HJcorrector} follows from \Cref{cor:viscoHJ}. 
%	Let $z$ be in $\R^{d}$. Then, \eqref{eq:trian} yields
%	\begin{equation*}
%	\PB(x,z) \leq \min_{y \in \R^{d}} \{\PB(x,y) + A_{t}(y,z)- \mane t\}.
%	\end{equation*}
%	On the other hand, by \Cref{prop:viscositybarrier} we have that for any $x$, $z \in \R^{d}$ there exists $(\overline\gamma, \overline{u}) \in \Gamma_{-t,0}^{\to z}$ such that 
%	\begin{equation*}
%	\PB(x,z)-\PB(x,\bar\gamma(-t)) = \int_{-t}^{0}{L(\bar\gamma(s),\bar{u}(s))\ ds} - \mane t.
%	\end{equation*}
%	Consequently, there exists $y_{x,z} \in \R^{d}$ such that 
%	\begin{equation*}
%	\PB(x,z) = \PB(x,y_{x,z}) + A_{t}(y_{x,z},z) - \mane t. 
%	\end{equation*}
%	This shows that $\PB(x, \cdot)$ satisfies \eqref{eq:corrector} for any $x \in \R^{d}$. \qed

\subsection{Projected Aubry set}

In this section, we define the Aubry set and begin by showing some extra properties of  Peierls barrier on such a set. We conclude by proving that the projected Aubry set $\A$ is a compact subset of $\R^{d}$.

\begin{definition}[{\bf Projected Aubry set}]
		The projected Aubry set $\A$ is defined by
		\begin{equation*}
			\A = \big\{x \in \R^{d} : \PB(x,x)=0 \big\}. 
		\end{equation*}	
		\end{definition}

Next, we provide a characterization of the point in $\A$. 

\begin{proposition}\label{prop:aubrycal}
Let $x \in \R^{d}$. Then, the following properties are equivalent.
\begin{itemize}
\item[($i$)] $x \in \A$. 
\item[($ii$)] There exists a trajectory-control pair $(\gamma_{x}, u_{x}) \in \Gamma_{-\infty, 0}^{\to x} \cap \Gamma_{0,\infty}^{x \to}$ such that  
\begin{equation}\label{eq:rel1}
\PB(\gamma_{x}(t), x) = - \int_{0}^{t}{L(\gamma_{x}(s), u_{x}(s))\ ds} + \mane t	
\end{equation}
and
\begin{equation}\label{eq:rel2}
\PB(x, \gamma_{x}(-t)) = - \int_{-t}^{0}{L(\gamma_{x}(s), u_{x}(s))\ ds} + \mane t.	
\end{equation}
\end{itemize}
Moreover, the trajectory $\gamma_{x}$ in ($ii$) satisfies
\begin{equation}\label{eq:inclusion}
	\gamma_{x}(t) \in \A, \quad t \in \R.
	\end{equation}
\end{proposition}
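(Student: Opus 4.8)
The plan is to prove the cycle $(i)\Rightarrow(ii)\Rightarrow(i)$ and then derive the inclusion \eqref{eq:inclusion}. For $(i)\Rightarrow(ii)$, suppose $x\in\A$, so $\PB(x,x)=0$. Choose times $t_n\to\infty$ and pairs $(\gamma_n,u_n)\in\Gamma_{-t_n,0}^{x\to x}$ realizing the $\liminf$ in the definition \eqref{eq:PB} of $\PB(x,x)$. Applying \Cref{rem:conv} (the left-endpoint version of \Cref{lem:barrierconvergence}) we extract a subsequence along which $u_n\rightharpoonup \bar u$ weakly in $L^2$ on compact subsets of $(-\infty,0]$ and $\gamma_n\to\bar\gamma$ locally uniformly, with $(\bar\gamma,\bar u)\in\Gamma_{-\infty,0}^{\to x}$. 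Running essentially the same argument used in \Cref{prop:viscositybarrier} — splicing a short sub-Riemannian geodesic from $\gamma_n(-t)$ to $\bar\gamma(-t)$, using \Cref{lem:boundedtrajectories}, \eqref{eq:compactnesstrajectory} and {\bf (L2)} to control the extra cost — together with lower semicontinuity of the action, one obtains \eqref{eq:rel2} (with the same chain of inequalities, the only new feature being that the initial and final points coincide, so $\PB(x,x)=0$ enters on the right). The forward pair on $[0,\infty)$ and the relation \eqref{eq:rel1} are produced symmetrically, concatenating the backward calibrated curve with forward near-optimizers for $A_{t}(x,x)$; here one uses the triangle-type inequality \eqref{eq:trian} and the fact that $\PB(x,x)=0$ forces the forward piece to be calibrated as well.

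For $(ii)\Rightarrow(i)$, assume \eqref{eq:rel1}–\eqref{eq:rel2} hold for some bi-infinite pair $(\gamma_x,u_x)$ with $\gamma_x(0)=x$. Adding the two identities for a fixed $t>0$ and using the subadditivity \eqref{eq:peierls} of $\PB$,
\begin{align*}
0\le \PB(x,x) \le \PB(x,\gamma_x(-t)) + \PB(\gamma_x(-t),x)
&\le \Big(-\!\int_{-t}^{0}\! L(\gamma_x,u_x)\,ds + \mane t\Big) + \PB(\gamma_x(-t),x).
\end{align*}
Now \eqref{eq:rel1} written at the point $\gamma_x(-t)$ in place of $x$ — equivalently, the time-shifted version of \eqref{eq:rel1} — gives $\PB(\gamma_x(-t),x)\le \int_{-t}^{0}L(\gamma_x,u_x)\,ds - \mane t$, so the right-hand side is $\le 0$. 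Combined with the lower bound $\PB(x,x)\ge 0$ from \Cref{lemma}$(i)$ we get $\PB(x,x)=0$, i.e. $x\in\A$.

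Finally, for the invariance \eqref{eq:inclusion}, fix $\tau\in\R$ and set $y=\gamma_x(\tau)$; we must show $\PB(y,y)=0$. The bi-infinite curve $s\mapsto\gamma_x(s+\tau)$ with control $s\mapsto u_x(s+\tau)$ is again an admissible pair passing through $y$ at time $0$, and relations \eqref{eq:rel1}–\eqref{eq:rel2} for this shifted pair follow from the original ones by the translation invariance of the action integral together with the Markov-type splitting of $\PB$ along calibrated curves (subtract the two calibration identities of $(\gamma_x,u_x)$ over the appropriate subintervals and use \eqref{eq:trian} and \eqref{eq:peierls} to reassemble). Hence the shifted pair verifies condition $(ii)$ at the base point $y$, and the already-proven implication $(ii)\Rightarrow(i)$ gives $y\in\A$. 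I expect the main obstacle to be the first implication $(i)\Rightarrow(ii)$: one has to run the geodesic-splicing/limit argument of \Cref{prop:viscositybarrier} simultaneously in forward and backward time and glue the two one-sided calibrated curves into a single bi-infinite one passing through $x$, checking carefully that the cost bookkeeping closes up exactly because $\PB(x,x)=0$; the other two parts are short bookkeeping with \eqref{eq:peierls} and \eqref{eq:trian}.
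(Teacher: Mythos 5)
Your outline of $(i)\Rightarrow(ii)$ is essentially the paper's argument: take near-optimal loops realizing $\PB(x,x)=0$, pass to a locally weak/uniform limit via \Cref{rem:conv} (resp.\ \Cref{lem:barrierconvergence}), splice sub-Riemannian geodesics as in \Cref{prop:viscositybarrier}, and close the estimate by lower semicontinuity; doing this in forward and backward time and gluing at $t=0$ gives the bi-infinite pair.

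The step I cannot accept in your proof of $(ii)\Rightarrow(i)$ is the bound $\PB(\gamma_x(-t),x)\le\int_{-t}^0 L\,ds-\mane t$, which you attribute to ``\eqref{eq:rel1} written at $\gamma_x(-t)$ in place of $x$.'' That justification is circular: \eqref{eq:rel1} is a relation for the fixed base point $x$ and its curve $\gamma_x$. Invoking it at $\gamma_x(-t)$ would require first knowing $\gamma_x(-t)\in\A$ (so that $(i)\Rightarrow(ii)$ applies there) — which is the invariance you have not yet proved — and even then the calibrated curve emanating from $\gamma_x(-t)$ has no reason to reach $x$, so you would not get that particular estimate for $\PB(\gamma_x(-t),x)$. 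The fix is simpler than what you attempted and needs neither \eqref{eq:peierls} nor any shifted relation: $\PB(x,\cdot)$ is dominated by $L-\mane$ (\Cref{rem:peierlsubsolution}), hence
\begin{equation*}
\PB(x,x)-\PB(x,\gamma_x(-t)) \le \int_{-t}^0 L(\gamma_x(s),u_x(s))\,ds - \mane t,
\end{equation*}
and substituting \eqref{eq:rel2} makes the right-hand side vanish, so $\PB(x,x)\le 0$; combined with \Cref{lemma}$(i)$ this yields $\PB(x,x)=0$. For the invariance $\gamma_x(t)\in\A$, your route (shift the curve, re-verify $(ii)$ at $\gamma_x(\tau)$, then apply $(ii)\Rightarrow(i)$) can be made to work, but the ``Markov-type splitting'' you invoke is not free: \eqref{eq:peierls} only gives $\PB(\gamma_x(t+\tau),\gamma_x(\tau)) \ge \PB(\gamma_x(t+\tau),x)-\PB(\gamma_x(\tau),x)$, and the reverse inequality needs a separate argument (e.g.\ $L\ge\mane$ from \Cref{cor:carmin} forces $L\equiv\mane$ along $\gamma_x$, after which \eqref{eq:trian} closes the gap). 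The paper's route is more direct: bound $\PB(\gamma_x(t),\gamma_x(t))$ by \eqref{eq:peierls}, then control $\PB(\gamma_x(t),x)$ and $\PB(x,\gamma_x(t))$ using \eqref{eq:rel1}--\eqref{eq:rel2}, the domination of $\PB(x,\cdot)$, and $\PB(x,x)=0$, without re-deriving calibration for the shifted curve.
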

\proof 

We start by proving that ($i$) implies ($ii$) and, in particular, we begin with \eqref{eq:rel1}. Since $x \in \A$, we have that $\PB(x,x)=0$ . So there exist $\{ t_{n}\}_{n \in \N}$ and $(\gamma^{+}_{n}, u^{+}_{n}) \in  \Gamma_{0,t_{n}}^{x \to x}$ such that
\begin{equation}\label{eq:zeros1}
t_{n} \to +\infty \quad \text{and} \quad \lim_{n \to +\infty} \int_{0}^{t_{n}}{L(\gamma^{+}_{n}(s), u^{+}_{n}(s))\ ds}-\mane t_{n} = 0. 	
\end{equation}
Then, by \Cref{lem:barrierconvergence} there exists $(\gamma^{+}_{x}, u^{+}_{x}) \in \Gamma_{0,\infty}^{x \to}$ such that $u^{+}_{n}$ weakly converges to $u^{+}_{x}$ and $\gamma^{+}_{n}$ uniformly converges to $\gamma^{+}_{x}$, on every compact subset of $[0,\infty)$, respectively. 
%We proceed to show that $(\bar\gamma, \bar{u}) = (\gamma_{x}, u_{x})$, i.e., \eqref{eq:rel1}, $\eqref{eq:rel2}$ are satisfied and $\bar\gamma$ is calibrated for $\chi$. 
	%At this point, in order to prove that $\bar\gamma$ is calibrated for $\chi$ we show that for any given $t \geq 0$
%\begin{equation}\label{eq:rel1}
%\PB(\bar\gamma(t), x) = - \int_{0}^{t}{L(\bar\gamma(s), \bar{u}(s))\ ds} + \mane t	
%\end{equation}
%and
%\begin{equation}\label{eq:rel2}
%\PB(x, \bar\gamma(-t)) = - \int_{-t}^{0}{L(\bar\gamma(s), \bar{u}(s))\ ds} + \mane t.		
%\end{equation}
%We only show \eqref{eq:rel1}, since \eqref{eq:rel2} can be obtained from \eqref{eq:rel1} and \eqref{eq:symmPB}. 
Fix $t \in [0,+\infty)$, fix $n$ large enough such that $d_{n} := d_{\SR}(\gamma^{+}_{x}(t), \gamma^{+}_{n}(t)) \leq 1$ and $t +1 < t_{n}$. Let $(\gamma_{0}, u_{0}) \in \Gamma_{t, t+ d_{n}}^{\gamma^{+}_{x}(t) \to\ \gamma^{+}_{n}(t)}$ be a geodesic pair and let $\widetilde{u}_{n} \in L^{2}(t, t_{n}+d_{n})$ be such that 
\begin{align*}
\widetilde{u}_{n}(s) = 
\begin{cases}
	u_{0}(s), & \quad s \in [t, t+d_{n}]
	\\
	u^{+}_{n}(s-d_{n}), & \quad s \in (t+d_{n}, t_{n}+d_{n}],
\end{cases}	
\end{align*}
so that, $(\widetilde\gamma_{n}, \widetilde{u}_{n}) \in \Gamma_{t, t_{n} + d_{n}}^{\gamma^{+}_{x}(t) \to x}$.  
Then, recalling that $\|u_0\|_{\infty, [t, t+d_{n}]} \leq 1$ by \Cref{lem:boundedtrajectories} and by \eqref{eq:compactnesstrajectory}, we get
\begin{equation*}
|\gamma_{0}(s)| \leq (|\gamma_{x}^{+}(t)| + c_{f}d_{n})e^{c_{f}d_{n}} \leq \Lambda(Q_{|x|}), \quad \forall\ s \in [t,t+ d_{n}]
\end{equation*}
with
\begin{equation*}
\Lambda(R) = (R + c_{f})e^{c_{f}} \quad(R\ge 0). 
\end{equation*} 
Thus, we obtain 
\begin{equation*}
\int_{t}^{t+d_{n}}{L(\gamma_{1}^{n}(s), u_{1}^{n}(s))\ ds} \leq 2d_{n}C_{1}(1+\Lambda(Q_{|x|})).
\end{equation*}
%and, since $|\gamma_{x}^{+}(t)| \leq Q_{R}$ we have that $\ell_{1}(1+\Lambda(|\gamma_{x}^{+}(t)|)) \leq \ell_{1}(1+Q_{R}))$.
Hence,
\begin{align*}
\int_{t}^{t_{n}}{L(\widetilde\gamma_{n}(s), \widetilde{u}_{n}(s))\ ds} =\ &  \int_{t}^{t+d_{n}}{L(\gamma_{0}(s), u_{0}(s))\ ds} + \int_{t + d_{n}}^{t_{n}+d_{n}}{L(\gamma_{n}(s), u_{n}(s))\ ds} 
\\
\leq\ &   2C_{1}(1+Q_{|x|})) d_{n} + \int_{t + d_{n}}^{t_{n}+d_{n}}{L(\gamma_{n}(s), u_{n}(s))\ ds}.
\end{align*}
Now, defining $\widehat{u}_{n}(s)=\widetilde{u}_{n}(s+t)$, that is, $(\widehat\gamma_{n}, \widehat{u}_{n}) \in \Gamma_{0, t_{n}-t}^{ \gamma^{+}_{x}(t) \to x}$, we get
\begin{multline*}
 \PB(\gamma^{+}_{x}(t), x) \leq\  \liminf_{n \to +\infty} \big[A_{t_{n}+d_{n}-t} - \mane(t_{n} + d_{n} -t)\big]
\\
\leq\  \liminf_{n \to +\infty} \left[ \int_{0}^{t_{n} + d_{n} -t}{L(\widehat\gamma_{n}(s), \widehat{u}_{n}(s))\ ds} -\mane(t_{n} + d_{n} -t)\right]
\\
\leq\  \liminf_{n \to +\infty} \left[\int_{t}^{t+d_{n}}{L(\gamma_{0}(s), u_{0}(s))\ ds} + \int_{t + d_{n}}^{t_{n} + d_{n}}{L(\widetilde\gamma_{n}(s), \widetilde{u}_{n}(s))\ ds} - \mane (t_{n} + d_{n} -t)\right]
\\
\leq\   \liminf_{n \to +\infty} \left[ 2C_{1}(1+Q_{|x|})d_{n} + \int_{t + d_{n}}^{t_{n} + d_{n}}{L(\gamma^{+}_{n}(s-d_{n}), u^{+}_{n}(s-d_{n}))\ ds} - \mane (t_{n} + d_{n} -t)\right]
\\
=\  \liminf_{n \to +\infty} \Big[ \big(2C_{1}(1+Q_{|x|}) - \mane\big) d_{n} + \int_{0}^{t_{n}}{L(\gamma^{+}_{n}(s), u^{+}_{n}(s))\ ds}
\\
- \mane t_{n}  - \int_{0}^{t}{L(\gamma^{+}_{n}(s), u^{+}_{n}(s))\ ds} + \mane t \Big].	
\end{multline*}
Then, by \eqref{eq:zeros1}, the uniform convergence of $\gamma_{n}$, the fact that $d_{n} \downarrow 0$ and the lower semicontinuity of the functional we deduce that
\begin{align*}
\int_{0}^{t}{L(\gamma^{+}(s), u^{+}_{x}(s))\ ds} - \mane t + \PB(\gamma^{+}_{x}(t), x) \leq 0. 	
\end{align*}
Moreover, since $\PB(x, \cdot)$ is dominated by $L-\mane$ we also have that
\begin{align*}
\PB(x, \gamma^{+}_{x}(t)) = \PB(x, \gamma^{+}_{x}(t)) - \PB(x,x) \leq \int_{0}^{t}{L(\gamma^{+}_{n}(s), u^{+}_{n}(s))\ ds} - \mane t	
\end{align*}
and by \eqref{eq:peierls} we know that $\PB(x, \gamma^{+}_{x}(t)) + \PB(\gamma^{+}_{x}(t),x ) \geq 0$. Therefore, we obtain
\begin{equation*}
\int_{0}^{t}{L(\gamma^{+}_{x}(s), u^{+}_{x}(s))\ ds} - \mane t +\PB(\gamma^{+}_{x}(t), x) = 0. 	
\end{equation*}
Similar arguments show that there exists $(\gamma^{-}_{x}, u^{-}_{x}) \in \Gamma_{-\infty,0}^{\to x}$ such that \eqref{eq:rel2} holds. 
By taking the sum of \eqref{eq:rel1} and \eqref{eq:rel2} one immediately get that ($ii$) implies ($i$). 
%Indeed, it is enough to consider $\{ t_{n}\}_{n \in \N}$ and $(\gamma^{-}_{n}, u^{-}_{n}) \in  \Gamma_{-t_{n},0}^{x \to x}$ such that
%\begin{equation}\label{eq:zeros}
%t_{n} \to +\infty \quad \text{and} \quad \lim_{n \to +\infty} \int_{-t_{n}}^{0}{L(\gamma^{-}_{n}(s), u^{-}_{n}(s))\ ds}-\mane t_{n} = 0. 	
%\end{equation}
%Then, by \Cref{rem:conv} there exists $(\gamma^{-}_{x}, u^{-}_{x})$ such that $u^{-}_{n}$ weakly converges to $u^{-}_{x}$ and $\gamma^{-}_{n}$ uniformly converges to $\gamma^{-}_{x}$, on every compact subset of $[0,\infty)$, respectively. Therefore, defining the control 
%\begin{equation*}
%u_{x}(s)=
%\begin{cases}
%u^{+}_{x}(s), & s \in [0,\infty)
%\\
%u^{-}_{x}(s), & s \in (-\infty, 0]
%\end{cases}
%\end{equation*} 
%and denoting by $\gamma_{x}$ the associated trajectory the proof of the first part is complete. 

Finally, observe that the inequality 
\begin{equation}\label{eq:claiminclusion}
\PB(\gamma_{x}(t), \gamma_{x}(t)) \leq 0 \quad (t \in\R)
\end{equation}
suffices to show \eqref{eq:inclusion}. Indeed, by \Cref{lemma} we have that $\PB(\gamma_{x}(t), \gamma_{x}(t)) \geq 0$. 
In view of \eqref{eq:peierls}, the following holds 
\begin{equation}\label{eq:triangular}
\PB(\gamma_{x}(t), \gamma_{x}(t)) \leq \PB(\gamma_{x}(t), x) + \PB(x, \gamma_{x}(t)) \quad (t \in \R). 
\end{equation}
%Since $\gamma_{x}$ is calibrated for $\PB(x, \cdot)$ we deduce that
%\begin{equation}\label{eq:13}
%\PB(x, \gamma_{x}(t)) = \int_{0}^{t}{L(\gamma_{x}(s), u_{x}(s))\ ds} - \mane t.
%\end{equation}
%and, similarly, 
%\begin{equation}\label{eq:12}
%\PB(\gamma_{x}(t), x)= -\int_{0}^{t}{L(\gamma_{x}(s), u_{x}(s))\ ds} + \mane t.
%\end{equation}
Hence, combining \eqref{eq:rel1} and \eqref{eq:rel2} with \eqref{eq:triangular} we get \eqref{eq:claiminclusion}. \qed

We now show that the compact set $\K_{L}$ in {\bf (L3)}  intersects the $\omega$-limit set of any minimizer of the action.
\begin{proposition}
For any $R > 0$ there exists $T_{R} > 0$ such that for any $x \in \overline{B}_{R}$, any $t \geq T_{R}$ and any optimal pair $(\gamma_{x}, u_{x}) \in \Gamma_{0,t}^{x \to}$ for \eqref{eq:minimization} satisfies
	\begin{equation*}
		\LL^{1}\left(\{s \in [0,t]: \gamma_{x}(s) \in \K_{L} \} \right) > 0.
	\end{equation*}
\end{proposition}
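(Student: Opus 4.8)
The plan is to argue by contradiction, combining the gap condition {\bf (L3)} with the uniform convergence of time averages \eqref{eq:maneconstant}. First I would fix $R>0$ and invoke \eqref{eq:maneconstant}: since $\tfrac1t V_t(x)\to\mane$ uniformly on $\overline{B}_R$, there is $T_R>0$ such that
\[
V_t(x) < \Big(\mane + \tfrac{\delta_L}{2}\Big)\,t, \qquad \forall\ x\in\overline{B}_R,\ \forall\ t\ge T_R,
\]
where $\delta_L>0$ is the constant appearing in {\bf (L3)}.

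Next I would record a pointwise lower bound for $L$ off the attractor: for every $x\in\R^d\setminus\K_{L}$ and every $u\in\R^m$,
\[
L(x,u) \ \ge\ \mane + \delta_L .
\]
Indeed, $L(x,\cdot)$ is even by {\bf (L1)}, so $D_uL(x,0)=0$, and it is strictly convex since $D_u^2L\ge 1/\ell_1>0$ by {\bf (L2)}; hence $u=0$ is a global minimizer of $L(x,\cdot)$ and $L(x,u)\ge L(x,0)$. Combining this with {\bf (L3)} and the identity $\min_{\K_{L}}L(\cdot,0)=\mane$ from Corollary~\ref{cor:carmin}(i) gives the claimed inequality.

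With these two facts in hand, suppose the assertion fails: there are $x\in\overline{B}_R$, $t\ge T_R$ and an optimal pair $(\gamma_x,u_x)\in\Gamma_{0,t}^{x\to}$ with $\LL^{1}(\{s\in[0,t]:\gamma_x(s)\in\K_{L}\})=0$. Then $\gamma_x(s)\notin\K_{L}$ for a.e.\ $s\in[0,t]$, so, using the pointwise bound above and the optimality of $(\gamma_x,u_x)$,
\[
V_t(x) \ =\ \int_0^t L(\gamma_x(s),u_x(s))\,ds \ \ge\ (\mane+\delta_L)\,t,
\]
which contradicts the choice of $T_R$. Hence the Lebesgue measure of the set of times spent in $\K_{L}$ is positive. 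I do not expect any real obstacle here: the only points requiring a little care are verifying that $u=0$ minimizes $L(x,\cdot)$ pointwise (so that the gap condition on $L(\cdot,0)$ propagates to all controls) and that $\min_{\K_{L}}L(\cdot,0)=\mane$, both of which are immediate from {\bf (L1)}--{\bf (L2)} and Corollary~\ref{cor:carmin}; optimality of $(\gamma_x,u_x)$ is used only through $V_t(x)=\int_0^tL(\gamma_x,u_x)\,ds$.
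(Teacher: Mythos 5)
Your proof is correct, and it takes a route that is genuinely different from the paper's. The paper argues by contradiction along a sequence $t_k\to\infty$ and derives the competing \emph{upper} bound on $V_{t_k}(x_0)$ by hand, building a test control that drives the state along a sub\nobreakdash-Riemannian geodesic to $x^*$ and then rests there, yielding $V_{t_k}(x_0)\le 2\delta^*(x_0)C_1(1+\Lambda(R))+(t_k-\delta^*(x_0))L(x^*,0)$; dividing by $t_k$ and sending $k\to\infty$ then contradicts \textbf{(L3)}. You instead pull the upper bound directly from the already-established uniform convergence \eqref{eq:maneconstant}, which immediately produces a $T_R$ valid for every $x\in\overline{B}_R$ (something the paper's sequence argument leaves a little implicit) and replaces the test-control construction by a one-line estimate. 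Both proofs share the same lower bound: if $\gamma_x(s)\notin\K_L$ for a.e.\ $s$, then the running cost is at least $L(\gamma_x(s),0)\ge\mane+\delta_L$, using $\mane=L(x^*,0)=\min_{\K_L}L(\cdot,0)$ from \Cref{cor:carmin}($i$). You make explicit the step that $u=0$ minimizes $L(x,\cdot)$ (from evenness in \textbf{(L1)} and strict convexity in \textbf{(L2)}), a fact the paper uses without comment in its own inequality $\int_0^{t_k}L(\gamma_k,u_k)\,ds\ge t_k\inf_{y\in\K_L^c}L(y,0)$. The trade-off: your argument is shorter and yields the uniform $T_R$ cleanly, but it depends on the full strength of \eqref{eq:maneconstant}; the paper's argument is more self-contained in that it only needs the exact value of $\mane$, re-deriving a one-sided average bound by an explicit competitor.
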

\proof
We proceed by contradiction. Suppose that there exist $x_{0} \in \R^{d}$, $\{t_{k}\}_{k \in \N}$ with $t_{k} \to \infty$, and a sequence of optimal pairs $(\gamma_{k}, u_{k}) \in \Gamma_{0,t_{k}}^{x_{0} \to}$ of \eqref{eq:minimization} such that 
\begin{equation*}
		\LL^{1}\left(\{s \in [0,t_{k}]: \gamma_{k}(s) \in \K_{L} \} \right) = 0.
	\end{equation*}
On the one hand, we have that
\begin{equation}\label{eq:p11}
\int_{0}^{t_{k}}{L(\gamma_{k}(s), u_{k}(s))\ ds} \geq t_{k} \inf_{y \in \K_{L}^{c}} L(y,0).	
\end{equation}
On the other hand, recall that $\delta^{*}(x_{0})=d_{\SR}(x_{0}, x^{*})$ and let $(\gamma_{0}, u_{0}) \in \Gamma_{0, \delta(x_{0})}^{x_{0} \to x^{*}}$ be a geodesic pair. For any $k \in \N$ such that $t_{k} > \delta^{*}(x_{0})$ define the control
\begin{align*}
\widetilde{u}_{k}(s)=
\begin{cases}
	u_{0}(s), & \quad s \in [0,\delta^{*}(x_{0})]
	\\
	0, & \quad s \in (\delta^{*}(x_{0}), t_{k}].
\end{cases}	
\end{align*}
Since $\| u_{0}\|_{\infty, [0, \delta^{*}(x_{0})]} \leq 1$, by \Cref{lem:boundedtrajectories} we deduce that 
\begin{equation*}
|\gamma_{0}(s)| \leq (|x_{0}| + c_{f}\delta^{*}(x_{0}))e^{c_{f}\delta^{*}(x_{0})} \leq \Lambda(R), \quad \forall\ s \in [0, \delta^{*}(x_{0})]
\end{equation*}
with
\begin{equation*}
\Lambda(R)=\sup_{x \in \overline{B}_{R}} (|x| + c_{f}\delta^{*}(x))e^{c_{f}\delta^{*}(x)}.
\end{equation*}
Thus, by {\bf (L2)} we obtain 
\begin{equation*}
\int_{0}^{\delta^{*}(x_{0})}{L(\gamma_{0}(s), u_{0}(s))\ ds} \leq 2\delta^{*}(x_{0})C_{1}(1+\Lambda(R)).
\end{equation*}
Then, it follows that
\begin{align}\label{eq:p22}
\begin{split}
\int_{0}^{t_{k}}{L(\gamma_{k}(s), u_{k}(s))\ ds} \leq\ & \int_{0}^{t_{k}}{L(\widetilde\gamma_{k}(s), \widetilde{u}_{k}(s))\ ds} 
\\
\leq\ & 2\delta^{*}(x_{0})C_{1}(1+\Lambda(R)) + (t_{k}-\delta^{*}(x_{0}))L(x^{*},0).
\end{split}
\end{align}
%where $\la^{\prime}(|x_{0}|):= \ell_{2}+\ell_{1}+\ell_{3}\la(|x_{0}|)^{\beta} $.
Thus, combining \eqref{eq:p11} and \eqref{eq:p22} and dividing by $t_{k}$ we get
\begin{equation*}
	\inf_{y \in \K_{L}^{c}} L(y,0) < \frac{1}{t_{k}}\delta^{*}(x_{0})\ell_{1}(1+\Lambda(R)) + \left(1-\frac{\delta^{*}(x_{0})}{t_{k}} \right)L(x^{*},0)
\end{equation*}
Moreover, by {\bf (L3)} we deduce that
\begin{equation*}
	L(x^{*},0) + \delta_{L} < \frac{2}{t_{k}}\delta^{*}(x_{0})C_{1}(1+\Lambda(R))  + \left(1-\frac{\delta^{*}(x_{0})}{t_{k}} \right)L(x^{*},0)
\end{equation*}
with $\delta_{L} > 0$. Taking the limit as $k \to \infty$ in the above inequalities yields $\delta_{L} \leq 0$ which is a contradiction. \qed

\medskip

\begin{remarks}\label{rem:newrem}\em
In view of the reversibility of $L$, the above Lemma implies that for any $x \in \R^{d}$ there exists $T_{x} \geq 0$ such that for any $t \geq T_{x}$, and any optimal pair $(\gamma_{x}, u_{x}) \in \Gamma_{-t,0}^{\to x}$ for the problem 
%\eqref{eq:minimization} {\color{red} scrivi bene il problema che è backward}
\begin{equation}\label{eq:backproblem}
\inf_{(\gamma, u) \in \Gamma_{-t,0}^{\to x}} \int_{-t}^{0}{L(\gamma(s), u(s))\ ds}
\end{equation}
we have that
$
		\LL^{1}\left(\{s \in [-t,0]: \gamma_{x}(s) \in \K_{L} \} \right) > 0.
$
\end{remarks}

We observe that \Cref{rem:newrem} can be applied to calibrated curves for the action integral over $[-t,0]$ since such curves are, in particular, minimizing trajectories for \eqref{eq:backproblem}. This is a key point to deduce that the projected Aubry set is bounded, as we show below. 

%Let $x_{0} \in \R^{d}$. We denote by $\mathcal{R}_{x_{0}}(\K_{L})$ the set of points $x \in \R^{d}$ such that there exists a calibrated curve $\gamma: [a,b] \to \R^{d}$ for $\PB(x_{0}, \cdot)$ with $\gamma(a) \in \K_{L}$, $\gamma(b)=x$. 

\begin{proposition}\label{prop:boundedaubry}
$\A$ is a bounded subset of $\R^{d}$. 
\end{proposition}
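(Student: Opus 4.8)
The plan is to argue by contradiction: suppose $\A$ is unbounded, so there is a sequence $x_n \in \A$ with $|x_n| \to \infty$. By \Cref{prop:aubrycal}, each $x_n$ carries a two-sided trajectory-control pair $(\gamma_n, u_n) \in \Gamma_{-\infty,0}^{\to x_n} \cap \Gamma_{0,\infty}^{x_n \to}$ satisfying the calibration identities \eqref{eq:rel1}--\eqref{eq:rel2}; moreover \eqref{eq:inclusion} tells us $\gamma_n(t) \in \A$ for all $t \in \R$. The key is that, restricted to $[-t,0]$, the curve $\gamma_n$ is a minimizer for the backward problem \eqref{eq:backproblem} (this follows from the calibration identity \eqref{eq:rel2} together with \eqref{eq:subsol}, exactly as noted after \Cref{rem:newrem}). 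Hence \Cref{rem:newrem} applies: there exists $T_{x_n} \ge 0$ such that for every $t \ge T_{x_n}$ the set $\{s \in [-t,0] : \gamma_n(s) \in \K_L\}$ has positive Lebesgue measure, so in particular $\gamma_n$ meets the fixed compact set $\K_L$ at some time.

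The next step is to turn ``$\gamma_n$ meets $\K_L$'' into a contradiction with $|x_n| \to \infty$ via an a~priori bound. Let $s_n \le 0$ be a time with $\gamma_n(s_n) \in \K_L$, and consider the restriction of $(\gamma_n, u_n)$ to $[s_n, 0]$, which is calibrated for $\PB(x_n,\cdot)$ and hence optimal for the action $A_{|s_n|}(\gamma_n(s_n), x_n)$. Using the calibration identity \eqref{eq:rel2} at $t = |s_n|$ together with the lower bound in \eqref{eq:L0}, namely $L(x,u) \ge \tfrac{1}{2\ell_1}|u|^2 + L(x^*,0)$ and $\mane = L(x^*,0)$ from \Cref{cor:carmin}, one gets
\begin{equation*}
\PB(x_n, \gamma_n(s_n)) = -\int_{s_n}^{0}{L(\gamma_n(s), u_n(s))\ ds} + \mane |s_n| \le -\frac{1}{2\ell_1}\int_{s_n}^{0}{|u_n(s)|^2\ ds},
\end{equation*}
while $\PB \ge 0$ by \Cref{lemma}(i); this forces $u_n \equiv 0$ on $[s_n,0]$, hence $\gamma_n$ is constant there and $x_n = \gamma_n(s_n) \in \K_L$, contradicting $|x_n| \to \infty$. (Symmetrically one could use the forward identity \eqref{eq:rel1} with a time in $[0,\infty)$ where $\gamma_n$ enters $\K_L$, invoking the original Proposition rather than \Cref{rem:newrem}.)

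The main obstacle I anticipate is making the passage ``$\gamma_n$ hits $\K_L$ at some finite time'' rigorous and uniform: \Cref{rem:newrem} gives a time $T_{x_n}$ that depends on $x_n$, so one must either run the backward calibrated curve for long enough (which is allowed, since $(\gamma_n,u_n)$ is defined on all of $(-\infty,0]$ and calibrated on every $[-t,0]$) to guarantee a visit to $\K_L$, or instead argue directly that a calibrated curve cannot stay in $\K_L^c$ on an interval of length exceeding some universal constant — which is precisely the content of the Proposition preceding \Cref{rem:newrem} once one checks that a calibrated curve is a minimizer of \eqref{eq:backproblem}. A secondary subtlety is the dichotomy $u_n \equiv 0$ versus $u_n \not\equiv 0$: if some calibrated segment used nonzero control one would only get $\PB = 0$ forcing $u_n \equiv 0$ on that segment, so the curve is genuinely stationary between consecutive visits to $\K_L$; combining this with the fact that it must visit $\K_L$ pins $x_n$ itself inside a bounded neighbourhood of $\K_L$, completing the argument.
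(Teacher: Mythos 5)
Your argument is correct, and it in fact proves more than the paper does, by a genuinely different mechanism. Both proofs open by applying \Cref{rem:newrem} to the backward calibrated curve of a point in $\A$ (the justification that calibrated curves for $\PB(x,\cdot)$ are minimizers of \eqref{eq:backproblem} when $x\in\A$ is the one the paper also relies on, and for such $x$ it is sound: by \eqref{eq:rel2} and \eqref{eq:L0} the calibrated segment realises the absolute minimum value $\mane t$). From there the two proofs diverge. The paper, having found $t_0\le 0$ with $\bar\gamma(t_0)\in\K_L$, time-shifts and invokes the a priori estimate \eqref{eq:compactnesstrajectory} to say that action minimizers launched from the compact set $\K_L$ remain in a fixed bounded ball, hence so does $x_0=\bar\gamma(0)$. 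You instead feed the coercivity bound $L(x,u)\ge \tfrac{1}{2\ell_1}|u|^2+\mane$ (from \eqref{eq:L0} and \Cref{cor:carmin}) directly into the calibration identity \eqref{eq:rel2}: since $\PB\ge0$ this forces $\int_{s_n}^0|u_n|^2\,ds=0$, so the calibrated curve is stationary and $x_n=\gamma_n(s_n)\in\K_L$. This is strictly stronger: it shows $\A\subset\K_L$ (and in fact, pushing one step further, $u_n\equiv0$ on all of $(-\infty,0]$, so \eqref{eq:rel2} reads $0=(\mane-L(x_n,0))t$, giving $L(x_n,0)=\mane$ and hence $\A\subset\{L(\cdot,0)=\mane\}$; the reverse inclusion is immediate, so your computation actually characterises $\A$). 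One simplification worth noting: once you observe $u_n\equiv0$ on every $[-t,0]$, \Cref{rem:newrem} is no longer needed at all — the identity $L(x_n,0)=\mane$ together with {\bf (L3)} already places $x_n$ in $\K_L$. Relatedly, the worries in your final paragraph about a dichotomy $u_n\equiv0$ versus $u_n\not\equiv0$ and about the non-uniformity of $T_{x_n}$ are moot: the coercivity step kills the control on every backward window simultaneously, and the visit time need not be uniform in $n$ for the argument to close.
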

\proof
%From \eqref{eq:compactnesstrajectory} we immediately deduce that $\mathcal{R}(\K_{L}) \subset \overline{B}_{R_{L}}$ for some $R_{L} \geq 0$. So, we only need to prove the first inclusion. 

Let $x_{0} \in \R^{d}$ be such that $\PB(x_{0}, x_{0}) = 0$. By \Cref{prop:viscositybarrier} there exists $(\bar\gamma, \bar{u}) \in \Gamma_{-\infty, 0}^{\to x_{0}}$ such that $\bar\gamma$ is a calibrated curve for $\PB(x_{0},\cdot\ )$. Moreover, by \Cref{rem:newrem}, we know that there exists $t_{0} \in (-\infty, 0]$ such that $\bar\gamma(t_{0}) \in \K_{L}.$ Thus, the trajectory, $\widetilde\gamma$, corresponding to  the control $$\widetilde{u}(s):=\bar{u}(s+t_{0}) \quad (s \in [0,-t_{0}])$$ with $\widetilde\gamma(-t_{0})=x_{0}$, is a calibrated curve for $\PB(x_{0}, \cdot\ )$ such that $\widetilde\gamma(0) \in \K_{L}$. Since, by \eqref{eq:compactnesstrajectory} the set of all points that be attained by action minimizers from a bounded set is itself bounded, we conclude that there exists $R_{L} \geq 0$ such that $x_{0} \in \overline{B}_{R_{L}}$. \qed

Next, we show that the projected Aubry set is closed. 
\begin{proposition}\label{prop:closedaubry}
$\A$ is a closed subset of $\R^{d}$.	
\end{proposition}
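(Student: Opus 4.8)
The plan is to prove that $\A$ is sequentially closed. Let $\{x_{n}\}_{n\in\N}\subset\A$ be such that $x_{n}\to x$ in $\R^{d}$; the aim is to show $x\in\A$, that is $\PB(x,x)=0$. Since $\PB(x,x)\ge 0$ by \Cref{lemma}\,($i$), it suffices to prove $\PB(x,x)\le 0$, and the starting point is the subadditivity \eqref{eq:peierls}, which gives
\[
0\le \PB(x,x)\le \PB(x,x_{n})+\PB(x_{n},x)\qquad(n\in\N).
\]
Thus the whole argument reduces to showing that $\PB(x,x_{n})+\PB(x_{n},x)\to 0$ as $n\to\infty$.

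To bound $\PB(x_{n},x)$ I would combine two facts: by \Cref{rem:peierlsubsolution} the function $\PB(x_{n},\cdot)$ is dominated by $L-\mane$, and $\PB(x_{n},x_{n})=0$ because $x_{n}\in\A$. Set $T_{n}:=d_{\SR}(x_{n},x)$ and choose a geodesic pair $(\gamma_{n},u_{n})\in\Gamma_{0,T_{n}}^{x_{n}\to x}$, so that $|u_{n}(s)|\le 1$ a.e. Since $x_{n}\to x$ (and $d_{\SR}$ induces the Euclidean topology), $T_{n}\to 0$; in particular $T_{n}\le 1$ and $|x_{n}|\le|x|+1$ for $n$ large, so \Cref{lem:boundedtrajectories} yields the uniform bound $|\gamma_{n}(s)|\le\Lambda:=(|x|+1+c_{f})e^{c_{f}}$ on $[0,T_{n}]$. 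Then the upper estimate in \eqref{eq:L0} gives $L(\gamma_{n}(s),u_{n}(s))\le 2C_{1}(1+\Lambda)$, and the domination property of $\PB(x_{n},\cdot)$ produces
\[
\PB(x_{n},x)=\PB(x_{n},x)-\PB(x_{n},x_{n})\le\int_{0}^{T_{n}}L(\gamma_{n}(s),u_{n}(s))\,ds-\mane T_{n}\le\big(2C_{1}(1+\Lambda)-\mane\big)T_{n}\xrightarrow[n\to\infty]{}0.
\]

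For the remaining term I would use that assumption {\bf (L1)} makes $L$ reversible, hence the action $A_{t}$ is symmetric (reversing time and changing the sign of the control maps $\Gamma_{0,t}^{y\to z}$ onto $\Gamma_{0,t}^{z\to y}$ without changing the cost), so that $\PB$ is symmetric and $\PB(x,x_{n})=\PB(x_{n},x)\to 0$ as well; alternatively one bounds $\PB(x,x_{n})$ directly through \eqref{eq:trian} by concatenating a geodesic from $x$ to $x_{n}$ with a long near-loop at $x_{n}$. Feeding both estimates into the displayed chain of inequalities gives $\PB(x,x)\le 0$, hence $\PB(x,x)=0$ and $x\in\A$, so $\A$ is closed.

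The one genuinely delicate point is controlling $\PB(x,x_{n})$: here the limit point $x$ occupies the first slot, so the domination of $\PB(x,\cdot)$ cannot be invoked, as $\PB(x,x)=0$ is exactly what we are trying to establish. This is circumvented by the symmetry of $\PB$ coming from reversibility (or by the explicit concatenation argument). Every other ingredient — the triangle inequalities \eqref{eq:peierls} and \eqref{eq:trian} for $\PB$, the existence and $L^{\infty}$ bound of geodesic pairs, \Cref{lem:boundedtrajectories}, and the growth bound \eqref{eq:L0} — is routine.
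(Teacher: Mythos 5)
Your proof is correct, and it is a mild but genuine repackaging of the paper's argument rather than an identical one. The paper proves $\PB(x,x)\le 0$ by a single concatenation: it glues a geodesic $x\to x_n$, a near-optimal loop $x_n\to x_n$ of large duration $t_n$, and a geodesic $x_n\to x$, and estimates the resulting competitor for $A_{t_n+2d_n}(x,x)-\mane(t_n+2d_n)$ directly. You instead factor the problem through the triangle inequality \eqref{eq:peierls}, $\PB(x,x)\le \PB(x,x_n)+\PB(x_n,x)$, and handle the two pieces separately: $\PB(x_n,x)$ via the domination $\PB(x_n,\cdot)\prec L-\mane$ from \Cref{rem:peierlsubsolution} together with $\PB(x_n,x_n)=0$; and $\PB(x,x_n)$ via the symmetry of $\PB$, which you correctly derive from the reversibility assumption $L(x,u)=L(x,-u)$ in {\bf (L1)} and time reversal (this symmetry is implicitly used elsewhere in the paper, e.g.\ \Cref{rem:newrem}, but is not invoked in this proof). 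Your fallback route for $\PB(x,x_n)$ --- the subadditivity $A_{d_n+t}(x,x_n)\le A_{d_n}(x,x_n)+A_t(x_n,x_n)$ followed by $\liminf_{t\to\infty}$ --- is in fact cleaner than the symmetry argument and is closest to what the paper does. What your decomposition buys is conceptual economy: you reuse already-established abstract properties of the Peierls barrier (triangle inequality, domination, symmetry) instead of redoing the concatenation estimate by hand; what the paper's direct concatenation buys is that it never needs to isolate or even mention the symmetry of $\PB$. The quantitative inputs --- geodesic pairs with $\|u\|_\infty\le 1$, the a priori bound from \Cref{lem:boundedtrajectories}, and the growth estimate \eqref{eq:L0} giving $L\le 2C_1(1+\Lambda)$ along the geodesic --- are identical in both proofs. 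You also correctly flag the one place where a na\"ive argument would be circular, namely that domination of $\PB(x,\cdot)$ cannot be used to control $\PB(x,x_n)$ because it would require knowing $\PB(x,x)$; this is precisely why the symmetry (or the second concatenation) is needed.
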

\proof
Let $\{x_{n}\}_{n \in \N}$ be a sequence in $\A$ such that $\displaystyle{\lim_{n \to \infty}} x_{n}=x \in \R^{d}$: we have to show that $x \in \A$. By definition we have that there exist subsequence, still labeled by $\{t_{n}\}_{n \in \N}$, and $\{(\gamma_{k_{n}}, u_{k_{n}})\}_{n \in \N} \in \Gamma_{0,t_{n}}^{x_{n} \to x_{n}}$ such that 
\begin{equation*}
\int_{0}^{t_{n}}{L(\gamma_{k_{n}}(s), u_{k_{n}}(s))\ ds} - \mane t_{n} \leq \frac{1}{n}.
\end{equation*}
Then, by \Cref{lem:barrierconvergence} there exists $(\bar\gamma, \bar{u})$ such that $u_{n}$ weakly converges to $\bar{u}$ and $\gamma_{n}$ uniformly converges to $\bar\gamma$, on every compact subset of $[0,\infty)$, respectively. Let us define $d_{n} = d_{\SR}(x_{n}, x)$ and the control 
\begin{align*}
\widetilde{u}_{n}(s)
\begin{cases}
	u^{n}_{1}(s), & \quad s \in [-d_{n}, 0]
	\\
	u_{n}(s), & \quad s \in (0, t_{n}]
	\\
	u^{n}_{2}(s), & \quad s \in (t_{n}, t_{n} + d_{n}]
\end{cases}	
\end{align*}
where $(\gamma^{n}_{1}, u^{n}_{1}) \in \Gamma_{-d_{n}, 0}^{x \to x_{n}}$ and $(\gamma^{n}_{2}, u^{n}_{2}) \in \Gamma_{t_{n}, t_{n} + d_{n}}^{x_{n} \to x}$ are geodesic pairs, on their respective intervals. Hence, we have that $(\widetilde\gamma_{n}, \widetilde{u}_{n}) \in \Gamma_{-d_{n}, t_{n} + d_{n}}^{x \to x}$.
Let us estimate $|\gamma_{1}^{n}(\cdot)|$ (the same estimate can be obtained for $|\gamma_{2}^{n}(\cdot)|$). Since $\|u^{n}_{1}\|_{\infty, [-d_{n}, 0]} \leq 1$, by \Cref{lem:boundedtrajectories} we get
\begin{equation*}
|\gamma_{1}^{n}(s)| \leq (|x| + c_{f}d_{n})e^{c_{f}d_{n}}\leq \Lambda(R), \quad \forall\ s \in [-d_{n}, 0]
\end{equation*}
where
\begin{equation*}
\Lambda(R) = (R + c_{f})e^{c_{f}} \quad(R\geq 0).
\end{equation*}
Thus, we obtain 
\begin{equation*}
\int_{-d_{n}}^{0}{L(\gamma_{1}^{n}(s), u_{1}^{n}(s))\ ds} \leq 2d_{n}C_{1}(1+\Lambda(|x|)).
\end{equation*}
Thus, we get
\begin{align*}
& \PB(x,x) \leq \liminf_{n \to \infty}\ [A_{t_{n} + 2d_{n}}(x,x) - \mane(t_{n} + 2d_{n})]
\\
\leq & \liminf_{n \to \infty} \Big( \int_{-d_{n}}^{0}{L(\gamma^{n}_{1}(s), u^{n}_{1}(s))\ ds} + \int_{0}^{t_{n}}{L(\gamma_{n}(s), u_{n}(s))\ ds} -\mane t_{n} 
\\
+ & \int_{t_{n}}^{t_{n} + d_{n}}{L(\gamma^{n}_{2}(s), u^{n}_{2}(s))\ ds} - 2\mane d_{n}\Big) \leq   \lim_{n \to \infty} \left(2d_{n}C_{1}(1+\Lambda(R)) + \frac{1}{n}\right)= 0.
\end{align*}
The proof is thus complete since, by definition, $\PB(x,x) \geq 0$ for any $x \in \R^{d}$. \qed

\begin{corollary}\label{thm:compactness}
	$\A$ is a nonempty compact set. 
\end{corollary}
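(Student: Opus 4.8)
The plan is to obtain the statement by combining the three results just established. Since $\A \subset \R^{d}$, compactness is equivalent to being closed and bounded: \Cref{prop:boundedaubry} shows that $\A$ is bounded (indeed $\A \subset \overline{B}_{R_{L}}$ for a suitable $R_{L} \geq 0$) and \Cref{prop:closedaubry} shows that $\A$ is closed, so the Heine--Borel theorem gives that $\A$ is compact.

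To see that $\A \neq \emptyset$, I would take the equilibrium point $x^{*} \in \K_{L}$ fixed earlier and test the definition of $A_{t}$ against the constant pair $(\gamma,u)$ with $\gamma \equiv x^{*}$ and $u \equiv 0$, which lies in $\Gamma_{0,t}^{x^{*} \to x^{*}}$ for every $t > 0$. This yields
\begin{equation*}
0 \leq A_{t}(x^{*},x^{*}) - \mane t \leq \int_{0}^{t}{L(x^{*},0)\ ds} - \mane t = \big(L(x^{*},0) - \mane\big)t = 0,
\end{equation*}
where the lower bound is the one established in the proof of \Cref{lemma}($i$) and the last equality uses $\mane = L(x^{*},0)$ from \Cref{cor:carmin}($i$). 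Hence $A_{t}(x^{*},x^{*}) - \mane t = 0$ for all $t > 0$, and therefore $\PB(x^{*},x^{*}) = \liminf_{t \to \infty}[A_{t}(x^{*},x^{*}) - \mane t] = 0$, i.e. $x^{*} \in \A$.

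There is no genuine obstacle here: all the substance (boundedness of $\A$, closedness of $\A$, and the identity $\mane = L(x^{*},0)$) has already been proved, and the only mild point is choosing the right competitor --- the stationary trajectory at $x^{*}$ --- to verify nonemptiness.
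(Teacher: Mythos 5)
Your proposal is correct and takes essentially the same route as the paper: compactness from \Cref{prop:boundedaubry} and \Cref{prop:closedaubry}, and nonemptiness by checking that $x^{*} \in \A$. The paper leaves the verification $x^{*} \in \A$ implicit, whereas you carry it out explicitly via the constant competitor and $\mane = L(x^{*},0)$ from \Cref{cor:carmin}, which is exactly the intended argument.
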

\proof
From \Cref{prop:boundedaubry} and \Cref{prop:closedaubry} it follows that the Aubry set is compact. Moreover, notice that $\A$ is nonempty since $x^{*} \in \A$. \qed

	\section{Horizontal regularity of critical solutions}
	\label{sec:horizontal}
	
	\noindent{\it Throughout this section we assume that {\bf (F1)}, {\bf (F2)}, {\bf (L1)} -- {\bf (L3)} and {\bf (S)} are in force.}

%	Set
%	\begin{equation*}
%	\mathcal{S}=\{\varphi \in C(\R^{d}): \varphi(x) \geq 0,\, \varphi \prec L - \mane\}.
%	\end{equation*}
%	\begin{definition}\label{def:Corrector}
%	We call $\chi \in \mathcal{S}$ a critical solution for the equation 
%	\begin{equation}\label{eq:HJcorrector}
%	\mane + H(x, D\chi(x))=0, \quad x \in \R^{d}
%	\end{equation}
%	if $\chi$ is a viscosity solution of \eqref{eq:HJcorrector} which satisfies 
%	\begin{equation}\label{eq:corrector}
%	\chi(x)=\inf_{(\gamma, u) \in \Gamma_{0, t}^{\to x}} \left\{\chi(\gamma(0)) + \int_{0}^{t}{L(\gamma(s), u(s))\ ds} \right\} - \mane t, \quad \forall\ t > 0,\, \forall\ x \in \R^{d}.
%	\end{equation}
%	\end{definition}
%	
%	Note that \eqref{eq:corrector} can be equivalently written as
%	\begin{equation}\label{eq:corrector2}
%	\chi(x)=\min_{y \in \R^{d}} \{\chi(y) + A_{t}(y,x)\} - \mane t, \quad \forall t > 0, \, \forall x \in \R^{d}.
%	\end{equation}
%	
%\begin{remarks}\label{rem:Infimum}\em
%The existence of critical solutions in $\mathcal{S}$ was shown in \cite[Theorem 7.6]{bib:CM02}.  Moreover, in \cite[Theorem 7.3]{bib:CM02}, it was proved that the infimum in \eqref{eq:corrector} is attained for any $\chi \in \mathcal{S}$ and any $(t, x) \in [0,\infty) \times \R^{d}$. 
%\end{remarks}

	In this section we show that a critical solution is differentiable along the range of $F$ (see the definition below) at any point lying on the projected Aubry set.

\begin{definition}[{\bf Horizontal differentiability}]\label{def:horizontaldifferentiability}
We say that a continuous function $\psi$ on $\R^{d}$ is differentiable at $x \in \R^{d}$ along the range of $F(x)$ (or, horizontally differentiable at $x$) if there exists $q_{x} \in \R^{m}$ such that
\begin{equation}\label{eq:diffF}
\lim_{v \to 0} \frac{\psi(x + F(x)v) - \psi(x) - \langle q_{x}, v \rangle}{|v|} = 0.	
\end{equation}
\end{definition}

	Clearly, if $\psi$ is Frech\'et differentiable at $x$, than $\psi$ is differentiable along the range of $F(x)$ and $q_{x}=F^{\star}(x)D\psi(x)$. For any $\psi \in C(\R^{d})$ we set $D^{+}_{F}\psi(x)=F^{*}(x)D^{+}\psi(x)$.
	
	\begin{lemma}\label{lem:directional}
	Let $\psi \in C(\R^{d})$ be locally semiconcave. Then, $\psi$ is differentiable at $x \in \R^{d}$ along the range of $F(x)$ if and only if $D^{+}_{F}\psi(x) = \{ q_{x}\}$.   	
	\end{lemma}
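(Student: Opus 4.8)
The plan is to exploit the fact that, for a locally semiconcave function, the superdifferential $D^{+}\psi(x)$ is nonempty and equals the closed convex hull of the set of reachable gradients; in particular it is a singleton precisely when $\psi$ is Fréchet differentiable at $x$. However, here we want the weaker, horizontal statement, so I would argue directly with the definition of directional derivatives rather than going through full differentiability. First I would recall that, by semiconcavity, for every unit vector $w\in\R^{d}$ the one-sided directional derivative
\[
\partial\psi(x;w):=\lim_{t\downarrow 0}\frac{\psi(x+tw)-\psi(x)}{t}
\]
exists and satisfies $\partial\psi(x;w)=\min_{p\in D^{+}\psi(x)}\langle p,w\rangle$. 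Applying this with $w=F(x)v/|F(x)v|$ for $v\in\R^{m}$ (using {\bf (F2)}, so $F(x)$ has full rank and $F(x)v\ne 0$ for $v\ne 0$), and using positive homogeneity, one gets that the limit
\[
\lim_{t\downarrow 0}\frac{\psi(x+tF(x)v)-\psi(x)}{t}=\min_{p\in D^{+}\psi(x)}\langle p,F(x)v\rangle=\min_{q\in D^{+}_{F}\psi(x)}\langle q,v\rangle
\]
exists for every $v\in\R^{m}$, where I used $D^{+}_{F}\psi(x)=F^{\star}(x)D^{+}\psi(x)$ and the identity $\langle p,F(x)v\rangle=\langle F^{\star}(x)p,v\rangle$.

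Next I would prove the two implications. If $D^{+}_{F}\psi(x)=\{q_{x}\}$ is a singleton, then the displayed limit equals $\langle q_{x},v\rangle$ for every $v$, so the map $v\mapsto\partial\psi(x;F(x)v)$ is linear; combined with the upper bound coming from semiconcavity — namely $\psi(x+F(x)v)-\psi(x)\le\langle q_{x},v\rangle+o(|v|)$, valid because any $q\in D^{+}_{F}\psi(x)$ gives $\psi(x+F(x)v)-\psi(x)\le\langle q,F^{+}\cdots\rangle$, more precisely because $p\in D^{+}\psi(x)$ yields $\psi(x+h)-\psi(x)\le\langle p,h\rangle+C|h|^{2}$ on a neighborhood — and with the matching lower bound from the existence of the directional limit, one concludes $\psi(x+F(x)v)-\psi(x)-\langle q_{x},v\rangle=o(|v|)$, which is exactly \eqref{eq:diffF}. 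Conversely, if $\psi$ is horizontally differentiable at $x$ with horizontal gradient $q_{x}$, then for every $p\in D^{+}\psi(x)$ and every $v\in\R^{m}$ the superdifferential inequality evaluated along the curve $t\mapsto x+tF(x)v$ forces $\langle F^{\star}(x)p,v\rangle\ge\langle q_{x},v\rangle$ by letting $t\downarrow 0$ from the appropriate side; replacing $v$ by $-v$ gives the reverse inequality, hence $\langle F^{\star}(x)p,v\rangle=\langle q_{x},v\rangle$ for all $v$, i.e.\ $F^{\star}(x)p=q_{x}$. Since this holds for every $p\in D^{+}\psi(x)$ and $D^{+}\psi(x)\ne\emptyset$ (semiconcavity), we get $D^{+}_{F}\psi(x)=\{q_{x}\}$.

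The only genuinely delicate point is the identity $\partial\psi(x;w)=\min_{p\in D^{+}\psi(x)}\langle p,w\rangle$ together with the quadratic upper bound $\psi(y)-\psi(x)-\langle p,y-x\rangle\le C_{\Omega}|y-x|^{2}$ for $p\in D^{+}\psi(x)$; both are standard facts for (locally) semiconcave functions — see, e.g., the characterization of $D^{+}$ in \cite{bib:CR} or the semiconcavity estimate recalled after {\bf (S)} in \Cref{sec:Settingsassumptions} — and I would simply cite them. Once those are in hand, the argument is purely a matter of evaluating one-sided limits along the affine directions $v\mapsto x+F(x)v$ and using the full-rank hypothesis {\bf (F2)} to pass freely between statements about $D^{+}\psi(x)\subset\R^{d}$ and statements about $D^{+}_{F}\psi(x)=F^{\star}(x)D^{+}\psi(x)\subset\R^{m}$. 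No further obstacle is expected.
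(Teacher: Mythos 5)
Your ``only if'' direction is essentially identical to the paper's: take any $p\in D^{+}\psi(x)$, evaluate the superdifferential inequality along $t\mapsto x+tF(x)v$, compare with the horizontal differential, and use $\pm v$ to force $F^{\star}(x)p=q_{x}$. The ``if'' direction, however, takes a genuinely different route. The paper works with superdifferentials at the \emph{nearby} points $x+F(x)v_{k}$: it picks $p_{k}\in D^{+}\psi(x+F(x)v_{k})$, applies the quadratic semiconcavity estimate from that point back to $x$, and then invokes the upper semicontinuity of $y\mapsto D^{+}\psi(y)$ together with the singleton hypothesis on $D^{+}_{F}\psi(x)$ to get $F^{\star}(x)p_{k}\to q_{x}$, whence $\liminf\ge 0$. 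You instead stay at the base point $x$ and invoke the one-sided directional derivative formula $\partial\psi(x;w)=\min_{p\in D^{+}\psi(x)}\langle p,w\rangle$, which under the singleton hypothesis collapses to $\langle q_{x},\cdot\rangle$. Both are standard semiconcavity facts, and your route is arguably more elementary in that it avoids set-valued continuity; the paper's buys you a self-contained proof that does not need to quote the directional-derivative formula.

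One step in your ``if'' direction is glossed over and should be spelled out: the ``matching lower bound from the existence of the directional limit'' does not follow from the directional limit alone, since $\lim_{t\downarrow 0}\frac{\psi(x+tF(x)w)-\psi(x)}{t}=\langle q_{x},w\rangle$ is a statement for each \emph{fixed} unit vector $w\in\R^{m}$, whereas $\eqref{eq:diffF}$ requires a uniform-in-direction estimate. To close this, you must use the local Lipschitz continuity of $\psi$ (itself a consequence of semiconcavity) and a compactness argument on the unit sphere of $\R^{m}$: given $v_{k}\to 0$ with $w_{k}:=v_{k}/|v_{k}|\to w$, replace $\psi(x+F(x)v_{k})$ by $\psi(x+|v_{k}|F(x)w)$ at a Lipschitz cost $O(|v_{k}|\,|w_{k}-w|)=o(|v_{k}|)$, then apply the directional limit in the fixed direction $w$. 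Equivalently, one may observe that after subtracting the parabola $C|\cdot|^{2}$ the difference quotients are monotone in $t$, so Dini's theorem gives uniform convergence on the sphere. Either way, this is precisely where semiconcavity re-enters for the lower bound — not just through the quadratic upper estimate you record — and it is the analogue of the paper's appeal to upper semicontinuity of $D^{+}\psi$. With that addition the argument is complete.
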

\proof
We first prove that if $D^{+}_{F}\psi(x)$ is a singleton then $\psi$ is differentiable at $x$ along the range of $F(x)$. Let $\{q_{x}\}=D_{F}^{+}\psi(x)$ and take $p_{x} \in D^{+}\psi(x)$. Then
\begin{equation*}
\psi(x + F(x)v) - \psi(x) - \langle p_{x}, F(x)v \rangle \leq o(|F(x)v|) \leq o(|v|).	
\end{equation*}
Therefore, we deduce that
\begin{equation*}
\limsup_{v \to 0} \frac{\psi(x + F(x)v) - \psi(x) - \langle q_{x}, v\rangle}{|v|} \leq 0.	
\end{equation*}
In order to prove the reverse inequality for the $\liminf$, let $\{ v_{k}\}_{k \in \N}$ be any sequence such that $v_{k} \not = 0$, $v_{k} \to 0$ as $k \to +\infty$ and let
\begin{equation*}
p_{k} \in D^{+}\psi(x + F(x)v_{k}).	
\end{equation*}
Then
\begin{align*}
& \frac{1}{|v_{k}|}\big(\psi(x + F(x)v_{k}) - \psi(x) - \langle p_{x}, F(x)v_{k} \rangle \big) 
\\
=\ & \frac{1}{|v_{k}|}\big(\psi(x + F(x)v_{k}) - \psi(x) - \langle p_{k}, F(x)v_{k} \rangle + \langle p_{k} - p_{x}, F(x)v_{k} \rangle \big)	
\\
\geq\ & \frac{1}{|v_{k}|}o(|F(x)v_{k}|) - |F^{*}(x)p_{k} - q_{x}||v_{k}|.
\end{align*}
By the upper-semicontinuity of $D^{+}\psi$ we have that $|F^{*}(x)p_{k} - q_{x}| \to 0$ as $k \uparrow \infty$. Since since this is true for any sequence $v_{k} \to 0$, we conclude that
\begin{equation*}
\liminf_{ v \to 0}\frac{\psi(x + F(x)v)-\psi(x)-\langle q_{x}, v \rangle}{|v|} \geq 0.	
\end{equation*}

We now prove that, if $\psi$ is differentiable along the range of $F(x)$, then $D^{+}_{F}\psi(x)$ is a singleton. To do so, let $p \in D^{+}\psi(x)$ and let $q_{x} \in \R^{m}$ be as in \eqref{eq:diffF}. Then, we have that 
\begin{equation*}
	\lim_{h \downarrow 0} \frac{\psi(x+hF(x)\theta) - \psi(x)}{h} = \langle q_{x}, \theta \rangle.
\end{equation*}
Moreover, by definition we have that for any $\theta \in \R^{d}$
\begin{equation*}
\lim_{h \downarrow 0} \frac{\psi(x+hF(x)\theta) - \psi(x)}{h} \leq \langle F^{*}p, \theta \rangle.
\end{equation*}
%Moreover, let $q_{x} \in \R^{m}$ be as in \eqref{eq:diffF}. Then, we know that 
%\begin{equation*}
%	\lim_{h \downarrow 0} \frac{\psi(x+hF(x)\theta) - \psi(x)}{h} \geq \langle q_{x}, \theta \rangle.
%\end{equation*}
Therefore, 
\begin{equation*}
	\langle q_{x}, \theta \rangle \leq \langle F^{*}p, \theta \rangle, \quad \forall\ \theta \in \R^{d}.
\end{equation*}
Thus $F^{*}(x)p=q_{x}$. 
\qed

\medskip
\noindent Hereafter, the vector $q_{x}$ given in  \Cref{def:horizontaldifferentiability} will be called the {\it horizontal differential} of $\psi$ at $x \in \R^{d}$ and will be denoted by $D_{F}\psi(x)$.

	The next two propositions ensure that any critical solution $\chi$ is differentiable along the range of $F$ at any point lying on a calibrated curve $\gamma$. The proof consists of showing that $D^{+}_{F}\chi$ is a singleton on $\gamma$.  We recall that 
	\begin{equation*}
	L^{*}(x,p)=\sup_{v \in \R^{d}} \big\{\langle p,v \rangle - L(x,v) \big\}	
	\end{equation*}
is the Legendre transform of $L$. We will rather write the critical equation using $L^{*}$, instead of the Hamiltonian $H$, to underline the role of horizontal gradients.

\begin{proposition}\label{prop:superdiff}
Let $\chi$ be a subsolution to the critical equation and let $(\gamma, u)$ be a trajectory-control pair such that $\gamma: \R \to \R^{d}$ is calibrated for $\chi$. Then we have that
\begin{equation*}
\mane + L^{*}(\gamma(\tau),p)=0, \quad \forall\ p \in D^{+}_{F}\chi(\gamma(\tau))
\end{equation*}
for all $\tau > 0$. 	
\end{proposition}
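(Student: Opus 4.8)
The plan is to prove the two inequalities $\mane+L^{*}(\gamma(\tau),p)\le 0$ and $\mane+L^{*}(\gamma(\tau),p)\ge 0$ separately, for an arbitrary $p\in D^{+}_{F}\chi(\gamma(\tau))$, and then add them. Recall that by definition $D^{+}_{F}\chi(\gamma(\tau))=F^{\star}(\gamma(\tau))D^{+}\chi(\gamma(\tau))$, so such a $p$ has the form $p=F^{\star}(\gamma(\tau))\tilde p$ for some $\tilde p\in D^{+}\chi(\gamma(\tau))$ (if $D^{+}\chi(\gamma(\tau))=\emptyset$ there is nothing to prove; note that for critical solutions $D^{+}\chi\neq\emptyset$ everywhere by the semiconcavity of \Cref{lem:semiconcavity}). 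The upper bound is immediate: since $\chi$ is a viscosity subsolution of $\mane+H(x,D\chi(x))=0$, one has $\mane+H(\gamma(\tau),\tilde p)\le 0$, and by the identity \eqref{eq:starhj} this reads $\mane+L^{*}(\gamma(\tau),F^{\star}(\gamma(\tau))\tilde p)=\mane+L^{*}(\gamma(\tau),p)\le 0$.

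For the lower bound I would use the calibration of $\gamma$ on a forward interval $[\tau,\tau+h]$ together with the superdifferential inequality at $\gamma(\tau)$. From $\tilde p\in D^{+}\chi(\gamma(\tau))$ we get
\begin{equation*}
\chi(\gamma(\tau+h))-\chi(\gamma(\tau))\le\langle\tilde p,\gamma(\tau+h)-\gamma(\tau)\rangle+o\big(|\gamma(\tau+h)-\gamma(\tau)|\big)\qquad(h\downarrow 0),
\end{equation*}
whereas calibration gives $\chi(\gamma(\tau+h))-\chi(\gamma(\tau))=\int_{\tau}^{\tau+h}L(\gamma(s),u(s))\,ds-\mane h$. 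Since $\gamma(\tau+h)-\gamma(\tau)=\int_{\tau}^{\tau+h}F(\gamma(s))u(s)\,ds$ and $\langle\tilde p,F(\gamma(s))u(s)\rangle=\langle F^{\star}(\gamma(s))\tilde p,u(s)\rangle$, subtracting yields
\begin{equation*}
\int_{\tau}^{\tau+h}\Big[\langle F^{\star}(\gamma(s))\tilde p,u(s)\rangle-L(\gamma(s),u(s))\Big]\,ds\ \ge\ -\mane h-o\big(|\gamma(\tau+h)-\gamma(\tau)|\big).
\end{equation*}
Now I would bound the integrand above by $H(\gamma(s),\tilde p)$, which holds by the very definition \eqref{eq:Hamiltonian} of $H$, divide by $h$, and let $h\downarrow 0$: the average $\frac1h\int_{\tau}^{\tau+h}H(\gamma(s),\tilde p)\,ds$ tends to $H(\gamma(\tau),\tilde p)$ by continuity, and, provided the error term $o(|\gamma(\tau+h)-\gamma(\tau)|)/h$ vanishes, this gives $H(\gamma(\tau),\tilde p)\ge-\mane$, i.e.\ $\mane+L^{*}(\gamma(\tau),p)\ge 0$ again through \eqref{eq:starhj}. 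Combining the two bounds yields the assertion.

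The main obstacle is controlling $o(|\gamma(\tau+h)-\gamma(\tau)|)/h$ as $h\downarrow 0$. With only $u\in L^{2}_{\mathrm{loc}}$, Cauchy--Schwarz gives $|\gamma(\tau+h)-\gamma(\tau)|\le C\sqrt h\,\|u\|_{L^{2}(\tau,\tau+h)}$ for a constant $C$ depending on a local bound for $\gamma$, so that the error term vanishes only at Lebesgue points of $|u|^{2}$, which would give the identity merely for a.e.\ $\tau$. To obtain it at every $\tau$ I would invoke the regularity granted by {\bf (S)}: the calibrated curve $\gamma$ is, on each bounded interval, a minimizer of the action (a function dominated by $L-\mane$ that is calibrated by $\gamma$ forces $\gamma$ to be action-minimizing between its endpoints), hence — since there are no non-trivial singular minimizing controls — it is a normal extremal, so $\gamma$ is locally Lipschitz; then $|\gamma(\tau+h)-\gamma(\tau)|=O(h)$ and $o(|\gamma(\tau+h)-\gamma(\tau)|)/h\to 0$ for all $\tau$, completing the argument.
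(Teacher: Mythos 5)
Your proof follows the same overall strategy as the paper (subsolution property for the upper bound, calibration plus superdifferential for the lower bound), but it is more careful on two points where the paper's write-up is loose. First, the direction of the difference quotient: you work on the forward interval $[\tau,\tau+h]$, which is what makes the superdifferential inequality at $\gamma(\tau)$ go the right way; the paper writes the computation on $[\tau-h,\tau]$ with $p\in D^{+}\chi(\gamma(\tau))$, for which the one-sided bound actually reads $\chi(\gamma(\tau))-\chi(\gamma(\tau-h))\geq\langle p,\gamma(\tau)-\gamma(\tau-h)\rangle-o(\cdot)$, i.e.\ the opposite inequality to what is printed — an apparent sign slip that your forward version avoids. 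Second, the error term: you correctly note that the superdifferential yields a remainder $o(|\gamma(\tau+h)-\gamma(\tau)|)$, not $o(h)$, and with only $u\in L^2_{\mathrm{loc}}$ the quotient $|\gamma(\tau+h)-\gamma(\tau)|/h$ is a priori bounded only at Lebesgue points; the paper simply writes $o(h)$ and then $o(1)$ after dividing by $h$ without comment. Your fix — calibration forces $\gamma$ to minimize the fixed-endpoint action on every compact interval, \textbf{(S)} then excludes nontrivial singular minimizers, so $(\gamma,u)$ is a normal extremal and $\gamma$ is locally Lipschitz — is the right idea and does close the gap for \emph{every} $\tau>0$. Two small caveats worth flagging: \textbf{(S)} as literally stated concerns minimizers of the free-endpoint problem \eqref{eq:minimization}, so applying it to fixed-endpoint minimizers is an interpretation (albeit the one consistent with the paper's own use of \cite{bib:CR}); and on subintervals where $u\equiv 0$ the extremal is trivially Lipschitz, so one should really argue ``normal or constant'' rather than just ``normal.'' With these remarks noted, your argument is correct and, on the error-term issue, more complete than the one in the paper.
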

\proof
On the one hand, since $\chi$ is a subsolution of \eqref{eq:HJcorrector} we have that for any $\tau \geq 0$
\begin{equation*}
\mane + H(\gamma(\tau), p) \leq 0, \quad 	\forall\ p \in D^{+}\chi(\gamma(\tau)). 
\end{equation*}
So, recalling \eqref{eq:starhj} and the fact that $D^{+}_{F}\chi(x)=F^{*}(x)D^{+}\chi(x)$ for any $x \in \R^{d}$ we get
\begin{equation*}
\mane + L^{*}(\gamma(\tau), p) \leq 0, \quad 	\forall\ p \in D^{+}_{F}\chi(\gamma(\tau)) \,\, \forall \tau \geq 0.
\end{equation*} 
Thus, it is enough to prove the reverse inequality. 
	
	Let $\tau > 0$ and $\tau \geq h > 0$. Since $\gamma$ is a calibrated curve for $\chi$ we have that
	\begin{align*}
	& \chi(\gamma(\tau)) - \chi(\gamma(\tau-h)) = \int_{\tau-h}^{\tau}{L(\gamma(s), u(s))\ ds} - \mane h.
	\end{align*}
Then, by the definition of superdifferential we get
\begin{align*}
& \chi(\gamma(\tau)) - \chi(\gamma(\tau-h)) \leq\ \langle p, \gamma(\tau)-\gamma(\tau-h) \rangle + o(h)  
\\
=\ & \left\langle p, \int_{\tau-h}^{\tau}{\dot\gamma(s)\ ds} \right\rangle + o(h)  
 =\ \int_{\tau-h}^{\tau}{ \langle F^{*}(\gamma(s)) p, u(s) \rangle\ ds} + o(h)	
\end{align*}
Therefore,
\begin{align*}
	\int_{\tau-h}^{\tau}{L(\gamma(s), u(s))\ ds} - \mane h \leq \int_{\tau-h}^{\tau}{ \langle F^{*}(\gamma(s)) p, u(s) \rangle\ ds} + o(h)
\end{align*}
or
\begin{align*}
- \mane \leq\ & \frac{1}{h} \int_{\tau-h}^{\tau}{\Big(\langle F^{*}(\gamma(s))p, u(s) \rangle - L(\gamma(s), u(s))\Big)\ ds} + o(1)
\\
\leq\ & \frac{1}{h}\int_{\tau-h}^{\tau}{L^{*}(\gamma(s), F^{*}(\gamma(s))p)\ ds} + o(1).
\end{align*}
Thus, for $h \to 0$ we conclude that
\begin{equation*}
\mane + L^{*}(\gamma(\tau), F^{*}(\gamma(\tau))p) \geq 0. \eqno\square
\end{equation*}

\begin{proposition}\label{prop:differentiability}
Let $\chi$ be a critical solution and let $(\gamma, u)$ be a trajectory-control pair such that $\gamma: (a,b) \to \R^{d}$ is calibrated for $\chi$, where $-\infty\leq a<b\leq+\infty$. Then, for any $\tau \in(a,b)$ we have that $\chi$ is differentiable at $\gamma(\tau)$ along the range of $F(\gamma(\tau))$. 
\end{proposition}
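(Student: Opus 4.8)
The strategy is to prove that $D^+_F\chi(\gamma(\tau))$ is a singleton and then quote \Cref{lem:directional}. I would start from the observation that, being a critical solution, $\chi$ is locally semiconcave by \Cref{lem:semiconcavity}; hence $D^+\chi(\gamma(\tau))$ is a nonempty, compact, convex subset of $\R^d$, and consequently its image $D^+_F\chi(\gamma(\tau))=F^*(\gamma(\tau))D^+\chi(\gamma(\tau))$ under the linear map $F^*(\gamma(\tau))$ is a nonempty, compact, convex subset of $\R^m$.

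The next step is to show that all of $D^+_F\chi(\gamma(\tau))$ lies on a single level set of the (strictly convex) function $p\mapsto L^*(\gamma(\tau),p)$. Since $\tau\in(a,b)$, the curve $\gamma$ is calibrated for $\chi$ on $[\tau-h,\tau]$ for every small $h>0$, which is precisely the data used in the proof of \Cref{prop:superdiff}; that proof relies only on calibration over a one-sided neighbourhood of the point together with $\chi$ being a subsolution of \eqref{eq:HJcorrector}, so it applies verbatim here and gives
\[
\mane + L^*(\gamma(\tau),p)=0 \qquad \text{for every } p\in D^+_F\chi(\gamma(\tau)),
\]
i.e. $D^+_F\chi(\gamma(\tau))\subseteq\Sigma_\tau:=\{p\in\R^m:\ \mane+L^*(\gamma(\tau),p)=0\}$.

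Finally I would combine convexity with strict convexity: if $p_0,p_1\in\Sigma_\tau$ with $p_0\neq p_1$, then by strict convexity of $L^*(\gamma(\tau),\cdot)$ the midpoint $\bar p=\tfrac12(p_0+p_1)$ satisfies $L^*(\gamma(\tau),\bar p)<\tfrac12\big(L^*(\gamma(\tau),p_0)+L^*(\gamma(\tau),p_1)\big)=-\mane$, so $\bar p\notin\Sigma_\tau$; hence $\Sigma_\tau$ contains no nondegenerate segment, and a convex subset of it --- in particular $D^+_F\chi(\gamma(\tau))$ --- must be a single point $\{q_{\gamma(\tau)}\}$. By \Cref{lem:directional}, $\chi$ is then differentiable at $\gamma(\tau)$ along the range of $F(\gamma(\tau))$, with $D_F\chi(\gamma(\tau))=q_{\gamma(\tau)}$. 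The only slightly delicate point is justifying the use of \Cref{prop:superdiff} for a curve defined merely on $(a,b)$ rather than on all of $\R$; since the argument there is purely local and one-sided at $\tau$, and $\tau$ is interior to $(a,b)$, this presents no real obstacle, and the proof is otherwise entirely elementary.
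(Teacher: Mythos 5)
Your proof is correct and follows essentially the same route as the paper: invoke the local semiconcavity from \Cref{lem:semiconcavity}, use \Cref{prop:superdiff} to place all of $D^+_F\chi(\gamma(\tau))$ on a level set of the strictly convex function $L^*(\gamma(\tau),\cdot)$, conclude it is a singleton, and finish with \Cref{lem:directional}. The only difference is that you spell out the convexity-versus-strict-convexity argument and the local, one-sided nature of \Cref{prop:superdiff} more explicitly than the paper does — both useful clarifications, but not a change of method.
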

\begin{proof}
We recall that, owing to \Cref{lem:semiconcavity}, $\chi$ is semiconcave. 
	Moreover, \Cref{prop:superdiff} yields
	\begin{equation*}
	\mane + L^{*}(\gamma(\tau), p) = 0, \quad \forall p \in D^{+}_{F}(\chi(\gamma(\tau)), \,\, \forall \tau \in(a,b).	
	\end{equation*}
Since $L^{*}(x, \cdot)$ is strictly convex and the set $D^{+}_{F}\chi(x)$ is convex,  the above equality implies that $D^{+}_{F}\chi(\gamma(\tau))$ is a singleton. Consequently, \Cref{lem:directional} ensures that  $\chi$ is differentiable at $\gamma(\tau)$ along the range of $F(\gamma(\tau))$. 
\end{proof}

We are now ready to prove the differentiability of any critical solution on the Aubry set.

		\begin{theorem}[{\bf Horizontal differentiability on the Aubry set}]\label{thm:aubryset}
		 Let $\chi$ be a critical solution. Then, the following holds. 
		\begin{itemize}
%		\item[($i$)] For any $x \in \A$ there exists a trajectory-control pair $(\gamma_{x}, u_{x}) \in \Gamma_{-\infty, 0}^{\to x} \cap \Gamma_{0,\infty}^{x \to}$ such that  
%		\begin{equation}\label{eq:rel1}
%\PB(\gamma_{x}(t), x) = - \int_{0}^{t}{L(\gamma_{x}(s), u_{x}(s))\ ds} + \mane t	
%\end{equation}
%and
%\begin{equation}\label{eq:rel2}
%\PB(x, \gamma_{x}(-t)) = - \int_{-t}^{0}{L(\gamma_{x}(s), u_{x}(s))\ ds} + \mane t	
%\end{equation}
\item[($i$)] For any $x \in \A$ there exists $\gamma_{x}: \R \to \R^{d}$, calibrated for $\chi$, with $\gamma_{x}(0)=x$.
\item[($ii$)] $\chi$ is horizontally differentiable at every $x \in \A$.
		\end{itemize}
\end{theorem}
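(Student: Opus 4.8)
The plan is to derive both statements from two results already available in the excerpt---the characterisation of $\A$ in \Cref{prop:aubrycal} and the differentiability result \Cref{prop:differentiability}---after the key observation that a calibrated curve issued from a point of $\A$ is necessarily stationary.

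For (i), I would fix $x\in\A$ and take the trajectory-control pair $(\gamma_{x},u_{x})\in\Gamma_{-\infty,0}^{\to x}\cap\Gamma_{0,\infty}^{x\to}$ furnished by \Cref{prop:aubrycal}, so that \eqref{eq:rel1} and \eqref{eq:rel2} hold. Since $\PB\ge0$ by point ($i$) of \Cref{lemma}, \eqref{eq:rel1} gives $\int_{0}^{t}L(\gamma_{x}(s),u_{x}(s))\,ds=\mane t-\PB(\gamma_{x}(t),x)\le\mane t$ for every $t\ge0$, and \eqref{eq:rel2} analogously gives $\int_{-t}^{0}L(\gamma_{x}(s),u_{x}(s))\,ds\le\mane t$. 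Combining this with the coercive lower bound $L(y,v)\ge\frac{1}{2\ell_{1}}|v|^{2}+\mane$---which follows from \eqref{eq:L0} and point ($i$) of \Cref{cor:carmin}---I would conclude that $\int_{a}^{b}|u_{x}(s)|^{2}\,ds=0$ for all $a<b$, hence $u_{x}\equiv0$, hence $\gamma_{x}\equiv x$, and also $L(x,0)=\mane$. For this stationary pair the calibration identity in part (2) of \Cref{def:domcal} is immediate, since for all $a<b$
\[
\chi(\gamma_{x}(b))-\chi(\gamma_{x}(a))=0=(b-a)\big(L(x,0)-\mane\big)=\int_{a}^{b}L(\gamma_{x}(s),u_{x}(s))\,ds-\mane(b-a),
\]
so $\gamma_{x}\colon\R\to\R^{d}$ is calibrated for $\chi$ with $\gamma_{x}(0)=x$.

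For (ii), having the calibrated curve $\gamma_{x}$ from (i), I would simply invoke \Cref{prop:differentiability} on the interval $(-\infty,+\infty)$ at $\tau=0$ (restricting $\gamma_{x}$ to a bounded open interval containing $0$ if one wishes to match the statement of that proposition literally): this gives that $\chi$ is differentiable at $\gamma_{x}(0)=x$ along the range of $F(x)$, i.e. $\chi$ is horizontally differentiable at $x$.

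I do not anticipate a substantial obstacle: the heavy lifting sits in \Cref{prop:aubrycal} and \Cref{prop:differentiability}, and the genuinely new point is only the elementary remark that nonnegativity of Peierls barrier, the coercivity of $L$, and the identity $\mane=L(x^{*},0)$ force the curve produced by \Cref{prop:aubrycal} to be constant, so that its calibration for an arbitrary critical solution $\chi$ is automatic. The one point needing attention is aligning the hypotheses in the final appeal to \Cref{prop:differentiability}, which is routine. (If one wished to avoid \Cref{prop:aubrycal} and argue in the classical weak-KAM style, the delicate step would instead be to build a two-sided $\chi$-calibrated curve through $x$ by a compactness argument on minimizers of the representation formula \eqref{eq:corrector}, relying on the a priori estimates \eqref{eq:l2bounds} and \eqref{eq:compactnesstrajectory} together with \Cref{lem:barrierconvergence}.)
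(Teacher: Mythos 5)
Your proof is correct, and it differs from the paper's in an instructive way. Both arguments feed the two-sided trajectory-control pair produced by \Cref{prop:aubrycal} into \Cref{prop:differentiability} to obtain horizontal differentiability, so the overall architecture is identical; the difference lies in how calibration of $\gamma_{x}$ for $\chi$ is established. The paper argues directly: it uses the domination $\chi \prec L - \mane$ from \eqref{eq:corrector} for one inequality, and combines \eqref{eq:corrector2} with the calibration of $\gamma_{x}$ for $\PB(\cdot, x)$ (taking a $\liminf$ in $s$ of $A_{s}(\gamma_{x}(t),x)-\mane s$) for the reverse one, never noticing that the curve is constant. You instead observe---correctly---that \eqref{eq:rel1}--\eqref{eq:rel2}, the nonnegativity of $\PB$, and the bound $L(y,v) \ge \frac{1}{2\ell_{1}}|v|^{2} + \mane$ obtained from \eqref{eq:L0} and \Cref{cor:carmin}($i$) force $u_{x}\equiv 0$ and hence $\gamma_{x}\equiv x$, $L(x,0)=\mane$, after which calibration for any critical solution is a triviality. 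Your route is shorter once the coercivity bound is on the table and it yields the extra, and rather striking, identity $\A = \{x\in\R^{d}: L(x,0)=\mane\}$ (the converse inclusion being immediate since the constant pair gives $A_{t}(x,x)-\mane t \le 0$), which the paper does not record. Be aware, though, that this shortcut hinges on the reversibility $L(x,u)=L(x,-u)$ in \textbf{(L1)}, which is what forces $D_{u}L(x,0)=0$ and hence the lower bound in \eqref{eq:L0}; the paper's more circuitous verification of calibration is agnostic to this and would survive a weakening of \textbf{(L1)}, whereas your argument would not. Finally, your parenthetical worry about matching the interval in \Cref{prop:differentiability} is unnecessary: that proposition already allows $a=-\infty$, $b=+\infty$.
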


\begin{proof}
We begin by proving ($i$). Let $x \in \A$ and let $(\gamma_{x}, u_{x}) \in \Gamma_{-\infty,0}^{\to x} \cap \Gamma_{0,\infty}^{x \to}$ be the trajectory-control pair given by \Cref{prop:aubrycal}. We claim that $\gamma_{x}$ is calibrated for $\chi$. By \eqref{eq:corrector} we have that $\chi \prec L - \mane$ and, for any $t \geq 0$, the following holds
\begin{equation*}
\chi(\gamma_{x}(t)) - \chi(x) \leq \int_{0}^{t}{L(\gamma_{x}(s), u_{x}(s))\ ds} - \mane t.	
\end{equation*}
Moreover, in view of \eqref{eq:corrector2} we deduce that 
\begin{equation*}
	\chi(x) - \chi(\gamma_{x}(t)) \leq A_{s}(\gamma_{x}(t), x) - \mane s 
\end{equation*}
for any $s \geq 0$. Thus, since $\gamma_{x}$ is calibrated for $\PB(\cdot, x)$ we get
\begin{equation*}
\chi(x) - \chi(\gamma_{x}(t)) \leq \PB(\gamma_{x}(t), x) = - \int_{0}^{t}{L(\gamma_{x}(s), u_{x}(s))\ ds} + \mane t. 
\end{equation*}
This proves that $\gamma_{x}$ is a calibrated curve for $\chi$ on $[0,\infty)$. Similarly, one can prove that the same holds on $(-\infty, 0]$. Moreover, for any $s$, $t > 0$ we have that
\begin{multline*}
\chi(\gamma_{x}(t))- \chi(\gamma_{x}(-s)) =\ \chi(\gamma_{x}(t)) - \chi(x) + \chi(x) - \chi(\gamma_{x}(-s))
\\
=\  \int_{0}^{t}{L(\gamma_{x}(\tau), u_{x}(\tau))\ d\tau} - \mane t + \int_{-s}^{0}{L(\gamma_{x}(\tau), u_{x}(\tau))\ d\tau} - \mane s
\\
=\  \int_{-s}^{t}{L(\gamma_{x}(\tau), u_{x}(\tau))\ d\tau} - \mane (t+s),
 \end{multline*}
and this completes the proof of ($i$). 
Conclusion ($ii$) follows from  ($i$) and \Cref{prop:differentiability}. 
\end{proof}

%\begin{proposition}\label{prop:invariance}
%	Let $\chi$ be a corrector. Let $x \in \A$ and let $(\gamma_{x}, u_{x})$ be such that $\gamma_{x}$ is a calibrated curve for $\chi$ on $\R$ with $\gamma_{x}(0)=x$.  Then, we have that
%	\begin{equation}\label{eq:inclusion}
%	\gamma_{x}(t) \in \A, \quad t \geq 0	.
%	\end{equation}
%\end{proposition}
%\proof
%In order to prove \eqref{eq:inclusion} it is enough to show that 
%\begin{equation}\label{eq:claiminclusion}
%\PB(\gamma_{x}(t), \gamma_{x}(t)) \leq 0, \quad t \geq 0	
%\end{equation}
%since it is always true that $\PB(\gamma_{x}(t), \gamma_{x}(t)) \geq 0$ for any $t \geq 0$. 
%From \eqref{eq:peierls} the following holds 
%\begin{equation}\label{eq:triangular}
%\PB(\gamma_{x}(t), \gamma_{x}(t)) \leq \PB(\gamma_{x}(t), x) + \PB(x, \gamma_{x}(t)), \quad t \geq 0. 
%\end{equation}
%Since $\gamma_{x}$ is calibrated for $\chi$ we deduce that
%\begin{equation}\label{eq:13}
%\PB(x, \gamma_{x}(t)) = \int_{0}^{t}{L(\gamma_{x}(s), u_{x}(s))\ ds} - \mane t.
%\end{equation}
%and
%\begin{equation}\label{eq:12}
%\PB(\gamma_{x}(t), x)= -\int_{0}^{t}{L(\gamma_{x}(s), u_{x}(s))\ ds} + \mane t.
%\end{equation}
%Hence, combining \eqref{eq:13} and \eqref{eq:12} with \eqref{eq:triangular} we get \eqref{eq:claiminclusion} which we recall that it implies \eqref{eq:inclusion}. \qed

\begin{corollary}\label{cor:feedback}
Let $\chi$ be a critical solution, let $x \in \A$, and let $\gamma_{x}$ be calibrated for $\chi$ on $\R$ with $\gamma_{x}(0)=x$. Then, $\gamma_{x}$ satisfies the state equation with control 
\begin{equation*}
u_{x}(t)=D_{p}L^{*}(\gamma_{x}(t), D_{F}\chi(\gamma_{x}(t))), \quad \forall t \in\R.
\end{equation*}
Moreover, 
\begin{equation*}
D_{F}\chi(\gamma_{x}(t))=D_{u}L(\gamma_{x}(t), u_{x}(t)), \quad \forall t \in \R. 
\end{equation*}
\end{corollary}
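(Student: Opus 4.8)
The plan is to read the feedback law off the Fenchel--Legendre duality between $L$ and $L^{*}$, after packaging the information already available along calibrated curves. Write $p(t):=D_{F}\chi(\gamma_{x}(t))$: this is well defined for every $t\in\R$ because $\gamma_{x}$ is calibrated for $\chi$ on all of $\R$ (\Cref{thm:aubryset}($i$)), so $\chi$ is horizontally differentiable at each $\gamma_{x}(t)$ and $D^{+}_{F}\chi(\gamma_{x}(t))=\{p(t)\}$ by \Cref{prop:differentiability} (and \Cref{lem:directional}); moreover, by \Cref{prop:superdiff} (using a harmless time translation so that the relevant instant is positive) together with \eqref{eq:starhj},
\begin{equation*}
\mane+L^{*}(\gamma_{x}(t),p(t))=0,\qquad t\in\R .
\end{equation*}

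The key step is to differentiate $t\mapsto\chi(\gamma_{x}(t))$ along $\gamma_{x}$ ``by hand'', for a.e.\ $t$, and to compare the two resulting expressions for the derivative. On the one hand, the calibration identity gives $\tfrac{d}{dt}\chi(\gamma_{x}(t))=L(\gamma_{x}(t),u_{x}(t))-\mane$ at every Lebesgue point of $s\mapsto L(\gamma_{x}(s),u_{x}(s))$. On the other hand, at a Lebesgue point $t$ of $u_{x}(\cdot)$ one has $\gamma_{x}(t+h)-\gamma_{x}(t)=hF(\gamma_{x}(t))u_{x}(t)+o(h)$ (the $o(h)$ being a routine Lebesgue-point/absolute-continuity estimate); since $\chi$ is locally Lipschitz (\Cref{lem:semiconcavity}) the $o(h)$ perturbation is absorbed, and horizontal differentiability at $\gamma_{x}(t)$ then yields $\chi(\gamma_{x}(t+h))-\chi(\gamma_{x}(t))=h\langle p(t),u_{x}(t)\rangle+o(h)$, i.e.\ $\tfrac{d}{dt}\chi(\gamma_{x}(t))=\langle p(t),u_{x}(t)\rangle$. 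Comparing the two, $\langle p(t),u_{x}(t)\rangle-L(\gamma_{x}(t),u_{x}(t))=-\mane=L^{*}(\gamma_{x}(t),p(t))$ for a.e.\ $t$.

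Since {\bf (L2)} makes $L(x,\cdot)$ of class $C^{2}$ and strictly convex, equality in the Fenchel inequality $\langle p,v\rangle-L(x,v)\le L^{*}(x,p)$ is equivalent to $v=D_{p}L^{*}(x,p)$ and, by the Legendre involution, to $p=D_{u}L(x,v)$. Applied with $(x,v,p)=(\gamma_{x}(t),u_{x}(t),p(t))$ this gives $u_{x}(t)=D_{p}L^{*}(\gamma_{x}(t),D_{F}\chi(\gamma_{x}(t)))$ and $D_{F}\chi(\gamma_{x}(t))=D_{u}L(\gamma_{x}(t),u_{x}(t))$ for a.e.\ $t$; together with $\dot\gamma_{x}(t)=F(\gamma_{x}(t))u_{x}(t)$ this is the first asserted formula. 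To promote these to every $t\in\R$, I would check that $t\mapsto p(t)=F^{\star}(\gamma_{x}(t))D^{+}\chi(\gamma_{x}(t))$ is continuous: if $t_{n}\to t$ and $q_{n}\in D^{+}\chi(\gamma_{x}(t_{n}))$, then $\{q_{n}\}$ is bounded by the local Lipschitz bound on $\chi$, any subsequential limit $q$ lies in $D^{+}\chi(\gamma_{x}(t))$ by upper semicontinuity of $D^{+}\chi$, and $F^{\star}(\gamma_{x}(t_{n}))q_{n}\to F^{\star}(\gamma_{x}(t))q\in\{p(t)\}$, whence $p(t_{n})\to p(t)$. Then $t\mapsto D_{p}L^{*}(\gamma_{x}(t),p(t))$ is a continuous representative of $u_{x}$, and both identities hold for all $t\in\R$.

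I expect the main obstacle to be the ``by hand'' differentiation of the second paragraph: because $\chi$ is merely semiconcave, the classical chain rule is unavailable, and one has to combine horizontal differentiability of $\chi$ at $\gamma_{x}(t)$ with its local Lipschitz continuity in order to control the $o(h)$ discrepancy between $\gamma_{x}(t+h)-\gamma_{x}(t)$ and $hF(\gamma_{x}(t))u_{x}(t)$ --- precisely the mechanism already exploited in the proof of \Cref{prop:superdiff}, which, together with \Cref{lem:semiconcavity}, is the crucial input here.
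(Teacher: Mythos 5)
Your proof is correct but reaches the key Fenchel identity by a genuinely different route from the paper. The paper's proof appeals to the Pontryagin Maximum Principle together with the inclusion of the dual arc into the superdifferential of the value function (citing \cite[Theorem~7.4.17]{bib:SC}) to obtain, for a.e.\ $t$, the equality
\begin{equation*}
\langle D_{F}\chi(\gamma_{x}(t)), u_{x}(t)\rangle = L(\gamma_{x}(t),u_{x}(t)) + L^{*}(\gamma_{x}(t), D_{F}\chi(\gamma_{x}(t))),
\end{equation*}
and then closes the argument exactly as you do, via strict convexity in {\bf (L2)} and continuity of $t\mapsto D_{p}L^{*}(\gamma_{x}(t),D_{F}\chi(\gamma_{x}(t)))$. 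You instead derive the same Fenchel equality internally: you compute $\tfrac{d}{dt}\chi(\gamma_{x}(t))$ in two ways, once from the calibration identity and once by combining horizontal differentiability of $\chi$ at $\gamma_{x}(t)$ with the local Lipschitz bound to absorb the Lebesgue-point remainder $\gamma_{x}(t+h)-\gamma_{x}(t)-hF(\gamma_{x}(t))u_{x}(t)=o(h)$, and then invoke \Cref{prop:superdiff} (in fact the subsolution inequality alone would suffice, since Fenchel already gives the opposite inequality) to pin down $L^{*}(\gamma_{x}(t),p(t))=-\mane$. Your approach buys self-containedness: it replaces the external PMP/dual-arc machinery by a direct chain-rule computation that is a refinement of the mechanism already used in the proof of \Cref{prop:superdiff}, upgrading the superdifferential inequality to an equality thanks to \Cref{thm:aubryset}($ii$). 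The paper's route is shorter on the page but relies on a nontrivial imported theorem; yours is longer but entirely internal to the framework developed in \Cref{sec:horizontal}. Both finish with the same continuity argument for $p(t)=D_{F}\chi(\gamma_{x}(t))$, and your justification of that continuity (boundedness of $D^{+}\chi$ by the local Lipschitz constant, upper semicontinuity, and the fact that $D^{+}_{F}\chi(\gamma_{x}(t))$ is a singleton) is sound.
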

\proof
Let $\chi$ be a critical solution, let $x \in \A$ and let $\gamma_{x}$ be a calibrated curve for $\chi$. Let $u_{x}$ be the control associated with $\gamma_{x}$. Then, from the Maximum Principle and the inclusion of the dual arc into the superdifferential of the corresponding value function, e.g. \cite[Theorem 7.4.17]{bib:SC},  it follows that for a.e. $t\in\R$
\begin{equation*}
\langle D_{F}\chi(\gamma_{x}(t)), u_{x}(t) \rangle = L(\gamma_{x}(t), u_{x}(t)) + L^{*}(\gamma_{x}(t), D_{F}\chi(\gamma_{x}(t)).
\end{equation*}
Notice that, $D_{F}\chi(\gamma_{x}(t))$ exists by \Cref{prop:differentiability}.  Hence, by the properties of the Legendre transform and the fact that
$t\mapsto D_{p}L^{*}(\gamma_{x}(t), D_{F}\chi(\gamma_{x}(t)))$ is continuous on $\R$, 
we conclude that $u_{x}$ has a continuous extension to $\R$ given by 
$
u_{x}(t)=D_{p}L^{*}(\gamma_{x}(t), D_{F}\chi(\gamma_{x}(t)))
$.
\qed

\begin{remarks}\em
Following the classical Aubry-Mather theory for Tonelli Hamiltonian systems, one could  define the Aubry set $\widetilde\A \subset \R^{d} \times \R^{m}$ as 
\begin{equation*}
\widetilde\A = \bigcap\{(x,u) \in \A \times \R^{m}: D_{F}\chi(x)=D_{u}L(x,u)\}
\end{equation*}
where the intersection is taken over all critical solutions $\chi$. Note that such a set is nonempty since $(x^{*}, 0) \in \widetilde\A$. 
\end{remarks}

\bibliographystyle{abbrv}
\bibliography{references}

\end{document}